\theoremstyle{plain}
\newtheorem{thm}{Theorem}[section]
\newtheorem{lemma}[thm]{Lemma}
\newtheorem{prop}[thm]{Proposition}
\newtheorem{cor}[thm]{Corollary}
\newtheorem{problem}[thm]{Problem}
\newtheorem*{thm*}{Theorem}
\newtheorem*{lemma*}{Lemma}
\newtheorem*{prop*}{Proposition}
\newtheorem*{cor*}{Corollary}
\newtheorem*{conj*}{Conjecture}
\newtheorem*{alg*}{Algorithm}
\theoremstyle{definition}
\newtheorem{defn}[thm]{Definition}
\newtheorem{ex}[thm]{Example}
\newtheorem{rmk}[thm]{Remark}
\newtheorem{obs}[thm]{Observation}
\theoremstyle{remark}
\newcommand{\nn}{\mathbb{N}}
\newcommand{\rr}{\mathbb{R}}
\newcommand{\bfa}{\mathbf{a}}
\newcommand{\bfb}{\mathbf{b}}
\newcommand{\bfc}{\mathbf{c}}
\newcommand{\bfd}{\mathbf{d}}
\newcommand{\bfe}{\mathbf{e}}
\newcommand{\bfi}{\mathbf{i}}
\newcommand{\bfj}{\mathbf{j}}
\newcommand{\bfp}{\mathbf{p}}
\newcommand{\bfq}{\mathbf{q}}
\newcommand{\bfr}{\mathbf{r}}
\newcommand{\bft}{\mathbf{t}}
\newcommand{\bfu}{\mathbf{u}}
\newcommand{\bfv}{\mathbf{v}}
\newcommand{\bfw}{\mathbf{w}}
\newcommand{\bfx}{\mathbf{x}}
\newcommand{\bfy}{\mathbf{y}}
\newcommand{\bfz}{\mathbf{z}}
\newcommand{\bv}{\mathbf{v}}
\newcommand{\bd}{\mathbf{d}}
\newcommand{\bc}{\mathbf{c}}
\newcommand{\bb}{\mathbf{b}}
\newcommand{\ba}{\mathbf{a}}
\newcommand{\bu}{\mathbf{u}}
\newcommand{\bw}{\mathbf{w}}
\newcommand{\x}{\mathbf{x}}
\newcommand{\bx}{\mathbf{x}}
\newcommand{\bq}{\mathbf{q}}
\newcommand{\by}{\mathbf{y}}
\newcommand{\bdel}{\mathbf{\delta}}
\newcommand*\Bell{\ensuremath{\boldsymbol\ell}}
\newcommand*\bal{\ensuremath{\boldsymbol\alpha}}
\newcommand{\cH}{\mathcal{H}}
\newcommand{\cM}{\mathcal{M}}
\renewcommand{\O}{\mathcal{O}}
\newcommand{\U}{\mathcal{U}}
\newcommand{\cC}{\mathcal{C}}
\newcommand{\R}{\mathbb{R}}
\newcommand{\K}{\mathbb{K}}
\newcommand{\dig}{G^{m,n}_2}
\newcommand{\Dmn}{D_{m,n}}
\newcommand{\Ev}{\mathrm{Ev}}
\renewcommand{\phi}{\varphi}
\newcommand{\D}{\Delta}
\newcommand{\supp}{\mathrm{supp}}
\newcommand{\Corr}{\mathrm{Corr}}
\newcommand{\Cut}{\mathrm{Cut}}
\newcommand{\Gcut}{\mathrm{GCut}}
\newcommand{\Marg}{\mathrm{Marg}}
\newcommand{\conv}{\mathrm{conv}}
\newcommand{\switch}{\mathrm{switch}}
\renewcommand{\deg}{\mathrm{deg}}
\newcommand{\facet}{\mathrm{facets}}
\newcommand{\overbar}[1]{\mkern 1.5mu\overline{\mkern-1.5mu#1\mkern-1.5mu}\mkern 1.5mu}
\renewcommand{\bar}{\overbar}
\title{Generalized Cut Polytopes for Binary Hierarchical Models}
\author{Jane Ivy Coons, Joseph Cummings, Benjamin Hollering, and Aida Maraj}
\begin{document}

\maketitle

\begin{abstract}
  Marginal polytopes are important geometric objects that arise in statistics as the polytopes underlying hierarchical log-linear models. These polytopes can be used to answer geometric questions about these models, such as determining the existence of maximum likelihood estimates or the normality of the associated semigroup.  Cut polytopes of graphs have been useful in analyzing binary marginal polytopes in the case where the simplicial complex underlying the hierarchical model is a graph. We introduce a generalized cut polytope that is isomorphic to the binary marginal polytope of an arbitrary simplicial complex via a generalized covariance map.
  This polytope is full dimensional in its ambient space and has a natural switching operation among its facets that can be used to deduce symmetries between the facets of the correlation and binary marginal polytopes.
  We find complete $\cH$-representations of the generalized cut polytope for some important families of simplicial complexes. We also compute the volume of these polytopes in some instances.
\end{abstract}

\section{Introduction}
Hierarchical models for discrete random variables are one of the most commonly used tools for analyzing categorical data. They are defined by a simplicial complex $\Delta$ whose vertices represent random variables and a vector of states $r$ which encodes the number of states each random variable has. Hierarchical models are log-linear models and thus have an associated polytope, commonly called the \emph{marginal polytope}, which can be used to answer geometric questions concerning the model. For example, consider the  question of determining if there exists a table $u$ with given $\Delta$-marginals, $t$. Such a table $u$ exists if the vector of marginals $\frac{t}{N}$, where $N$ is the sample size, lies in the marginal polytope. This can be checked efficiently with an $\cH$-representation of the marginal polytope \cite{reducCyclic2002}. 

There is also the problem of determining if the maximum likelihood estimate (MLE) exists for a given table of counts $u$ and hierarchical model $(\Delta, r)$.  In many modern applications, hierarchical models are used to model data that takes the form of large sparse contingency tables. In such cases it is possible that maximum likelihood estimate (MLE) may fail to exist \cite{hierMLE2012}. The MLE for these models is guaranteed to exist if $\frac{t}{N}$ lies in the relative interior of the marginal polytope where $t$ is again the vector of $\Delta$-marginals. This can also be checked efficiently with an $\cH$-representation of the marginal polytope or a description of the facial sets \cite{hierMLE2012}. Recently, Wang, Rauh, and Massam gave an even more efficient algorithm that approximates the facial sets instead \cite{approxMargFaces2019}.

$\cH$-representations are also instrumental for determining if the marginal polytope or its associated cone are normal. Normality is an important property for a variety of reasons. Deciding membership in the semigroup $\nn A$ can be done in polynomial time when $A$ is normal but is NP-complete in general \cite{bernstein2015}. Normality of an associated semigroup is also important for the higher codimension toric fiber product described in  \cite{higherCodimTFP}. When $\Delta$ is a graph free of $K4$ minors, the marginal cone has an $\cH$-representation given by the cycles in the graph which was used in \cite{normalGraphOhsugi2010,normalGraphSullivant2009} to completely characterize when the marginal cone is normal. The $\cH$-representation in this case is a result of the $\cH$-representation of cut polytopes of these graphs. 

Obtaining a complete $\cH$-representation of the marginal polytope can be difficult for a number of reasons. The marginal polytope is typically not full-dimensional in its ambient space. Hence there is not a unique $\cH$-representation and as the dimension grows it becomes increasingly untenable to compute directly. 

It is well-known in the statistics literature that the binary marginal polytope of $\Delta$ is isomorphic to the correlation polytope (also known as the moment polytope) of $\D$ \cite[Proposition 19.1.20]{algstat2018}. The correlation polytope has coordinates indexed by the nonempty faces of $\Delta$. The points in the correlation polytope are vectors of $\Delta$-moments of probability distributions of $n$-dimensional binary random vectors \cite[Proposition~8.2.7]{algstat2018}. The $\cH$-representation of the correlation polytope is known for graphs free of $K_4$ minors \cite{Barahona1986}; however, there is no known facet description for arbitrary $\Delta$. In the case where $\Delta$ is a graph, the binary marginal polytope and correlation polytope of $\Delta$ and the cut polytope of the suspension of $\Delta$ are all isomorphic to one another.

In this paper we introduce a generalization of the cut polytope that extends to any simplicial complex and use the additional structure to obtain $\cH$-representations for some families of simplicial complexes. This new polytope is full dimensional in its ambient space; thus its facet description is unique. The switching operation of \cite[Chapter 26.3]{cutsDeza1997} can also be used to create new facets from known facets, which is a technique we exploit throughout this paper. In Section 2, we provide background on marginal, correlation, and cut polytopes. In Section 3, we introduce the generalized cut polytope for a simplicial complex and describe the switching operation on it. We use this operation to deduce symmetries among the facets of the correlation and marginal polytopes. In Sections 4, 5 and 6 we give $\cH$-representations of the generalized cut polytope for different families of simplicial complexes including the boundary of the simplex and some other unimodular simplicial complexes. Section 7 provides some results on the degree of some hierarchical models and hence the normalized volume of the generalized cut polytope for some families of simplicial complexes. 
 
\section{Preliminaries}
In this section we give some background on hierarchical models and the marginal polytope. We refer the reader to \cite[Chapter 9]{algstat2018} for additional details on hierarchical models and to \cite{ziegler} for more information on polyhedra and simplicial complexes. Throughout the present paper, we denote the entry in the $i$th row and $j$th column of a matrix $A$ by $A_i^j$.

\begin{defn}
Let $2^{[n]}$ denote the power set of $[n]$. A \emph{simplicial complex} on ground set $[n]$ is a set $\Delta \subset 2^{[n]}$ such that if $F \in \Delta$ and $F' \subset F$ then $F' \in \Delta$. The elements of $\Delta$ are called \emph{faces} and the inclusion maximal elements are called \emph{facets}. We denote the set of facets of $\Delta$ with $\facet(\Delta)$. We denote by $\bar\Delta$ the set of nonempty faces of $\Delta$; that is $\bar\Delta = \Delta \setminus \{ \emptyset \}.$
\end{defn}

We typically define simplicial complexes by their facets and will often drop the set brackets when writing sets and facets. For instance, we write $\Delta = [12][23]$ for the simplicial complex with facets $\{1, 2\}$ and $\{2, 3\}$ which we abbreviate by $12$ and $23$.

A \emph{hierarchical model} on random variables $X_1, \ldots X_n$ is a \emph{log-linear model} defined by a simplicial complex $\Delta$ on $[n] = \{1, 2, \ldots n\}$ and a vector of states $r = (r_1, \ldots, r_n) \in \nn^n$. Each random variable $X_i$ is naturally associated to the vertex $i$ of $\Delta$ and has $r_i$ states. 
The design matrix of the model, denoted $U_{\Delta,r}$, is constructed in the following way. Let $\bfi = (i_1, \ldots, i_n) \in \prod_{k = 1}^n[r_k]$ and for any $F \in \facet(\Delta)$ let $\bfi_F$ be the restriction of $\bfi$ to the indices in $F$. The columns of $U_{\Delta, r}$ are indexed by $\bfj \in \prod_{k = 1}^n[r_k]$ and the rows are indexed by pairs $(\bfi_F,F)$ for $\bfi_F \in \prod_{k \in F}[r_k]$. The entries are given by
\begin{equation}
\label{eqn:margEntries}
    u_{(\bfi_F,F)}^\bfj = 
\begin{cases}
  1 & \text{ if } \bfj_F = \bfi_F \\
  0 & \text{otherwise}.
\end{cases}
\end{equation}

For more background on log-linear models see \cite[Chapter 6]{algstat2018}.
In this paper we restrict to the case where each $r_i = 2$ so that the associated random variables are all binary and we use the notation
$U_\Delta = U_{\Delta, (2, 2, \ldots 2)}$. This is called the \emph{binary hierarchical model} associated to $\Delta$. In this case, the column indices $\bfj$ are 0/1 strings of length $n$ which are naturally in bijection with subsets of $[n]$. We reformulate the definition of the matrix $U_\Delta$ which simplifies notation. With this convention the columns of $U_\Delta$ are indexed by $S \subset [n]$ and denoted $\bu^S$ while the rows are indexed by pairs $(H,F)$ with $F \in \facet(\Delta)$ and $H \subset F$. Then the entries are given by
\[
u_{(H, F)}^S =
\begin{cases}
  1 & \text{ if } S \cap F = H \\
  0 & \text{otherwise}. 
\end{cases}
\]
We let $\mathcal{E}(\Delta) : = \{(H,F) \mid H \subset F \in \facet(\Delta) \}$ be the set of row indices of $U_{\Delta}$.

\begin{defn}
The \emph{marginal polytope} is the convex hull of the columns of the matrix $U_{\Delta,r}$. We denote this with $\Marg(\Delta,r)$. In the case where $r = (2,\dots,2)$, we call this the \emph{binary marginal polytope} and denote it
$\Marg(\Delta) = \conv(U_{\Delta}) = 
\{\sum_{S} \lambda_{S} \bu^{S} ~|~ \sum_{S} \lambda_{S} = 1, ~\lambda_{S} \geq 0 \}$.
\end{defn}

The following example illustrates how this design matrix is constructed.

\begin{ex}
\label{ex:margMatrix}
Let $\Delta = [12] [23]$ be a simplicial complex. Then the matrix $U_\Delta$ is
\[
U_\Delta = 
\begin{blockarray}{ccccccccc}
      & \emptyset & 1 & 2 & 3 & 12 & 13 & 23 & 123 \\
      \begin{block}{c(cccccccc)}
         (\emptyset, 12)   & 1 & 0 & 0 & 1 & 0 & 0 & 0 & 0 \\
        (1,12)           & 0 & 1 & 0 & 0 & 0 & 1 & 0 & 0 \\
        (2,12)          & 0 & 0 & 1 & 0 & 0 & 0 & 1 & 0 \\
        (12,12)          & 0 & 0 & 0 & 0 & 1 & 0 & 0 & 1 \\
        (\emptyset,23)   & 1 & 1 & 0 & 0 & 0 & 0 & 0 & 0 \\
        (2,23)           & 0 & 0 & 1 & 0 & 1 & 0 & 0 & 0 \\
        (3,23)           & 0 & 0 & 0 & 1 & 0 & 1 & 0 & 0 \\
        (23,23)          & 0 & 0 & 0 & 0 & 0 & 0 & 1 & 1 \\
      \end{block}
\end{blockarray}.
\]
The first block of rows corresponds to the facet $12$ while the second block corresponds to the facet $23$. Within each block, the rows are indexed by subsets of the corresponding facet.
\end{ex}

This presentation of the polytope as a convex hull of a finite set is called a $\mathcal{V}$-representation of the polytope. Every polytope also can be defined as an intersection of finitely many half-spaces and such a presentation is called an $\cH$-representation \cite{ziegler}. An $\cH$-representation can be constructed from the $\mathcal{V}$-representation using Fourier-Motzkin elimination but this becomes intractable as the size of the polytope grows. As explained in the introduction, $\cH$-representations of the marginal polytope are desirable since they allow one to easily answer many relevant statistical questions about the corresponding hierarchical model. This motivates the following problem for marginal polytopes. 

\begin{problem}
\label{pr:margHrep}
Given a $\mathcal{V}$-representation, $\Marg(\Delta) = \conv(U_\Delta)$ efficiently construct an 
\newline
$\cH$-representation of $\Marg(\Delta)$; that is find a matrix $A$ and vector $\bfa$ such that
$\Marg(\Delta) = \{\bfz ~|~ A\bfz \leq  \bfa\}$.
\end{problem}

We may also call an $\cH$-representation of the polytope a facet description of the polytope. The example below illustrates such an $\cH$-representation. 

\begin{ex}
Let $\Delta = [12] [23]$. Then $U_\Delta$ is the same matrix given in Example \ref{ex:margMatrix} so $\Marg(\Delta) = \conv(U_\Delta)$. The coordinates of the ambient space of the polytope are indexed by pairs $\mathcal{E}(\Delta)$.  We denote these coordinates with $z_{(H,F)}$. An $\cH$-representation of $\Marg(\Delta)$ is the set of $\bfz \in \rr^8$ such that:
\begin{gather*}
    z_{(\emptyset,23)} + z_{(3,23)} + z_{(2,23)} + z_{(23,23)} = 1, \\
    z_{(\emptyset,12)} + z_{(1,12)} + z_{(2,23)} + z_{(23,23)} = 1, \\
    z_{(2,12)} + z_{(12,12)} - z_{(2,23)} - z_{(23,23)} = 0, \\
    z_{(3,23)} \geq 0, ~~ 
    z_{(2,23)} \geq 0, ~~
    z_{(23,23)} \geq 0, ~~
    z_{(2,12)} \geq 0, ~~
    z_{(\emptyset,12)} \geq 0, \\
    z_{(2,23)} + z_{(23,23)} - z_{(2,12)} \geq 0, ~~
    z_{(3,23)} + z_{(1,12)} + z_{(23,23)} \leq 1, ~~
    z_{(\emptyset,12)} + z_{(1,12)} + z_{(23,23)} \leq 1.
\end{gather*}
In this case, $\dim(\Marg(\Delta)) = 5$ but it sits in an 8-dimensional ambient space. This means that the above $\cH$-representation is not unique. 
\end{ex}

Problem \ref{pr:margHrep} has been solved for some families of simplicial complexes. For instance, \cite{Barahona1986} describes the facet-defining inequalities for some graphs using the cycles in the graph. An $\cH$-representation can also be constructed inductively when $\Delta$ is decomposable \cite{compPoly06}.  When all of the random variables in the model are binary, the marginal polytope is affinely isomorphic to the \emph{correlation polytope} which is full dimensional in its ambient space \cite[Proposition 19.1.20]{algstat2018}. We explicitly describe this isomorphism in the appendix. Recall that $\bar\D$ is the set of nonempty faces of $\D$.

\begin{defn}
\label{defn:corrPoly}
The \emph{correlation polytope} associated to a simplicial complex $\Delta$, denoted $\Corr(\Delta)$ is a $0/1$-polytope in $\R^{\bar\Delta}$ whose vertices $\bv^S$ are indexed by subsets $S$ of $[n]$. 
For $F\in \bar\D$, the $F$ coordinate of $\bv^S$ is
\[
v^S_F = \begin{cases}
1 & \text{ if } F \subset S \\
0 & \text{ otherwise.}
\end{cases}
\]
The matrix with columns $\bv^S$ is denoted $V_{\Delta}$.
\end{defn}

The affine isomorphism between $\Marg(\Delta)$ and $\Corr(\D)$ maps the vertex $\bu^S$ of $\Marg(\Delta)$ to the vertex $\bv^S$ of the correlation polytope, see \Cref{Prop:MargToCorr}. 

\begin{ex}
Again let $\Delta = [12][23]$. $\Corr(\D)$ has 8 vertices which are indexed by subsets $S$ of $[3]$. It sits in an ambient space of dimension $5$ with coordinates indexed by the 5 nonempty faces of $\Delta$ which are $1, 2, 3, 12, 23$. These vertices are the columns of the matrix
\[
V_\Delta = 
\begin{blockarray}{ccccccccc}
        & \emptyset & 1 & 2 & 3 & 12 & 13 & 23 & 123\\
      \begin{block}{c(cccccccc)}
        1   & 0 & 1 & 0 & 0 & 1 & 1 & 0 & 1 \\
        2   & 0 & 0 & 1 & 0 & 1 & 0 & 1 & 1 \\
        3   & 0 & 0 & 0 & 1 & 0 & 1 & 1 & 1 \\
        12  & 0 & 0 & 0 & 0 & 1 & 0 & 0 & 1 \\
        23  & 0 & 0 & 0 & 0 & 0 & 0 & 1 & 1 \\
      \end{block}
    \end{blockarray}.
\]
It is full dimensional in its ambient space and so it has unique $\cH$-representation as the set of $\by \in \rr^{\bar\Delta}$ such that 
\begin{align*}
y_{2} + y_{3} - y_{23} \leq 1, &\quad&
y_{1} + y_{2} - y_{12} \leq 1, &\quad&
y_{12} - y_{1} \leq 0, &\quad&
y_{12} - y_{2} \leq 0, \\
y_{23} - y_{2} \leq 0, &\quad&
y_{23} - y_{3} \leq 0, &\quad&
y_{12} \geq 0, &\quad&
y_{23} \geq 0.
\end{align*}
\end{ex}


When the simplicial complex $\Delta$ is a graph there is another isomorphic polytope, named the \emph{cut polytope}. 

\begin{defn}
Let $\Gamma$ be a graph with vertex set $[n]$ and edge set $E$. The \emph{cut polytope} has vertices $\bdel^{S | T} \in \rr^E$ indexed by set partitions $S | T$ of $[n]$ given by
\[\bdel^{S | T}_e=\begin{cases} 
      1 & \#(e \cap S)=1 \\
      0 & \mathrm{else}.
   \end{cases}
\]
for each edge $e \in E$.
Note the definition does not depend on which part of the set partition $S|T$ is used.
\end{defn}

The \emph{covariance map} $\varphi_\Gamma$ gives an isomorphism from $\Corr(\Gamma)$ to $\Cut(\Hat{\Gamma})$ where $\Hat{\Gamma}$ is the suspension of the graph $\Gamma$ \cite[Chapter 5.2]{cutsDeza1997}.  
The \emph{suspension} $\Hat{\Gamma}$ is a graph obtained from $\Gamma$ by adding a new vertex, $n+1$, to $\Gamma$ and connecting every existing vertex to the new one. The covariance map is defined by $\phi_{\Gamma}(\bfy)_{i,n+1} = y_{i}$ for $i \in [n]$ and $\phi_\Gamma(\bfy)_{i,j} = y_{i} + y_{j}-2y_{ij}$ for $ij \in \Gamma$.

The cut polytope has been studied extensively and appears in many different areas of math \cite{cutsDeza1997}. For instance, the elements of the cut cone of the complete graph are the $\ell_1$-embeddable finite metrics. There are also applications to optimization problems such as the max-cut problem. In \cite{Barahona1983}, Barahona shows that the max-cut problem is solvable in polynomial time for graphs with no $K_5$ minor.  

The cut polytope also has additional structure that can make it easier to find a facet description, though an explicit facet description for general graphs is unlikely \cite{cutsDeza1997}. The \emph{switching operation} allows one to find new facet-defining inequalities from a given one. This can make it easier to enumerate all of the inequalities that define the polytope. We describe this operation in Section \ref{sec:switching}. There is also a complete facet description for the polytope when the graph $\Delta$ is free of $K_5$ minors \cite{Barahona1986}. This has been further leveraged to prove results about normality of the marginal polytope and marginal cone \cite{normalGraphOhsugi2010, normalGraphSullivant2009}. Leveraging the additional structure of cut polytopes to obtain a better understanding of marginal polytopes is our main goal in this paper. In the next section we generalize the definition of the cut polytope of a graph to any simplicial complex.

\begin{figure}
    \centering
    \begin{tikzpicture}[scale = .75]
        \draw (0,0)--(1,0)--(2,0);
        \draw [fill] (0,0) circle [radius = .05];
        \draw [fill] (1,0) circle [radius = .05];
        \draw [fill] (2,0) circle [radius = .05];
        \node [below] at (0,0) {1};
        \node [below] at (1,0) {2};
        \node [below] at (2,0) {3};
        \node [below] at (1,-.5) {$\Delta$};
        
        \draw (4,0)--(5,0)--(6,0);
        \draw [fill] (4,0) circle [radius = .05];
        \draw [fill] (5,0) circle [radius = .05];
        \draw [fill] (6,0) circle [radius = .05];
        \draw [fill] (5,1) circle [radius = .05];
        \node [below] at (4,0) {1};
        \node [below] at (5,0) {2};
        \node [below] at (6,0) {3};
        \draw (4,0)--(5,1);
        \draw (5,0)--(5,1);
        \draw (6,0)--(5,1);
        \node [above] at (5,1) {4};
        \node [below] at (5, -.5) {$\Hat{\Delta}$};
    \end{tikzpicture}
    \caption{The graph $\Delta$ and its suspension $\Hat{\Delta}$ from Example \ref{ex:graphCutPoly}.}
    \label{fig:pathCutPoly}
\end{figure}
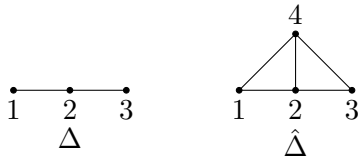

\begin{ex}
\label{ex:graphCutPoly}
Let $\Gamma$ be the graph on $[3]$ with edges $12$ and $23$ pictured in Figure \ref{fig:pathCutPoly}. Then the cut polytope has vertices indexed by set partitions $S | T$ of $[3]$ and coordinates indexed by the edges of $\Gamma$. The polytope is the convex hull of the columns of the matrix
\[
\begin{blockarray}{ccccc}
        & \emptyset | 123 & 1 | 23 & 12| 3 & 13 | 2 \\
      \begin{block}{c(cccc)}
        12   & 0 & 1 & 0 & 1  \\
        23   & 0 & 0 & 1 & 1  \\
      \end{block}
    \end{blockarray}.
\]
The cut polytope of the suspension $\Hat{\Gamma}$ has vertices indexed by set partitions $S | T$ of $[4]$, but note that these are in bijection with subsets $S \subset [3]$ since we can always choose $4$ to appear in $T$. Then $\Cut(\Hat{\Gamma})$ is the convex hull of the columns of the matrix
\[
\begin{blockarray}{ccccccccc}
        & \emptyset & 1 & 2 & 3 & 12 & 13 & 23 & 123\\
      \begin{block}{c(cccccccc)}
        14   & 0 & 1 & 0 & 0 & 1 & 1 & 0 & 1 \\
        24  & 0 & 0 & 1 & 0 & 1 & 0 & 1 & 1 \\
        34   & 0 & 0 & 0 & 1 & 0 & 1 & 1 & 1 \\
        12   & 0 & 1 & 1 & 0 & 0 & 1 & 1 & 0 \\
        23   & 0 & 0 & 1 & 1 & 1 & 1 & 0 & 0 \\
      \end{block}
    \end{blockarray}.
\]
The polytope $\Cut(\Hat{\Gamma})$ is full dimensional in its ambient space.
\end{ex}

As the last of our preliminaries, we give a brief introduction to Gale duality since it is used throughout the paper in order to understand which sets of vertices lie on a common face. For a more thorough survey of the subject, we recommend \cite{rekhathomas, ziegler}.

\begin{defn}\label{defn:Gale-Transform}
Let \(P \subset \R^{d-1}\) be a full-dimensional polytope with vertices \(\{\bfv_1, \dotsc, \bfv_n\}\). Let $V$ the be matrix with columns $\bfv_1, \dots, \bfv_n$. Let $\bar V$ denote the matrix obtained by adding a row of all ones to the top of $V$. Let \(B\) be a matrix whose columns \(\{B^1, \dotsc, B^{n-d}\}\) form a basis for the kernel of $\bar V$. The {\it Gale transform} of \(P\) is then the \(n\) ordered rows \( \{\bb_1,\dotsc, \bb_n\}\) of \(B\).
\end{defn}

The Gale transform is not unique since there is typically no unique choice of basis for \(\ker(V)\); however, the underlying oriented matroid is unique so we may choose any basis. The following theorem allows us to use the Gale transform of $P$ to determine which vertices lie on a common face of $P$.

\begin{thm}\label{thm:Gale-Transform}
Let \(P\subset \R^{d-1}\) be a full-dimensional polytope with vertices \(\{\bfv_1,\dotsc, \bfv_n\}\). A collection of vertices \(\{\bfv_j ~|~ j \in \mathcal{J}\}\) are the vertices of a face of \(P\) if and only if \({\bf 0} \in \mathrm{relint}(\conv\{\bb_i ~|~ i \notin \mathcal{J}\})\) or \(\mathcal{J} = [n]\).
\end{thm}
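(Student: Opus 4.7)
The plan is to characterize faces of $P$ by supporting hyperplanes and then translate that characterization, via the orthogonality between $\ker(\bar V)$ and the row space of $\bar V$, into the claimed statement about Gale vectors.

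First I would invoke the standard supporting hyperplane description: for $\mathcal{J} \subsetneq [n]$, the set $\{\bfv_j : j \in \mathcal{J}\}$ is the vertex set of a proper face of $P$ if and only if there exist $c \in \R^{d-1}$ and $\alpha \in \R$ with $c^\top \bfv_j = \alpha$ for all $j \in \mathcal{J}$ and $c^\top \bfv_i < \alpha$ for all $i \notin \mathcal{J}$. Packaging this as $w = (-\alpha, c) \in \R^d$, so that the inner product of $w$ with the $i$th column of $\bar V$ equals $c^\top \bfv_i - \alpha$, and setting $y := \bar V^\top w \in \R^n$, the existence of such a supporting hyperplane is equivalent to the existence of $y \in \mathrm{row}(\bar V)$ with $y_j = 0$ for $j \in \mathcal{J}$ and $y_i < 0$ for $i \notin \mathcal{J}$.

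Second, since the columns of $B$ form a basis of $\ker \bar V$, the row space of $\bar V$ coincides with the orthogonal complement of $\mathrm{col}(B)$. A vector $y \in \R^n$ therefore lies in $\mathrm{row}(\bar V)$ if and only if $\sum_{i=1}^{n} y_i \bb_i = \mathbf{0}$ in $\R^{n-d}$, since $\bb_i$ is the $i$th row of $B$. Combined with the sign constraints of the previous step, this says that $\mathcal{J}$ is the vertex set of a proper face of $P$ precisely when there exist coefficients $\lambda_i > 0$ for $i \notin \mathcal{J}$ (namely $\lambda_i := -y_i$) satisfying $\sum_{i \notin \mathcal{J}} \lambda_i \bb_i = \mathbf{0}$.

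Third, I would promote this positive linear dependence to the stated relative interior condition. Since each $\lambda_i$ is strictly positive, the sum $\Lambda := \sum_{i \notin \mathcal{J}} \lambda_i$ is strictly positive, and the weights $\mu_i := \lambda_i / \Lambda$ form a convex combination in which every $\mu_i > 0$ and $\sum_{i \notin \mathcal{J}} \mu_i \bb_i = \mathbf{0}$. Because $\mathrm{relint}(\conv\{\bb_i : i \notin \mathcal{J}\})$ is exactly the set of convex combinations in which every weight is strictly positive, this gives the claimed equivalence; the case $\mathcal{J} = [n]$ (the improper face $P$ itself) is handled separately since then the complementary index set is empty and no convex hull is defined. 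For the reverse direction I would simply run the argument backwards: extending $\mu$ by zero on $\mathcal{J}$ yields $y \in (\mathrm{col}(B))^\perp = \mathrm{row}(\bar V)$, whence $y = \bar V^\top w$ for some $w = (-\alpha, c)$, from which the required supporting hyperplane is read off.

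The main subtlety, and really the only nontrivial obstacle, is the bookkeeping that keeps strict inequality $c^\top \bfv_i < \alpha$ aligned with strict positivity $\mu_i > 0$, so that the combination lands in the relative interior of the convex hull rather than on its boundary; the rest of the argument is linear algebra. A secondary care point is correctly excluding $\mathcal{J} = [n]$ as an edge case rather than trying to force it into the equivalence.
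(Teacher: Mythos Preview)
The paper does not give its own proof of this theorem; it is stated as background and attributed to standard references (Ziegler, Thomas) with the remark ``For a more thorough survey of the subject, we recommend \cite{rekhathomas, ziegler}.'' So there is no in-paper argument to compare against.

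Your proposal is the standard proof and is essentially correct. The three-step outline --- supporting hyperplane description of faces, identification $\mathrm{row}(\bar V) = (\mathrm{col}\,B)^\perp$, and normalization of a strictly positive dependence to a strictly positive convex combination --- is exactly how this is done in the references. One small phrasing issue: you write that $\mathrm{relint}(\conv\{\bb_i : i \notin \mathcal{J}\})$ ``is exactly the set of convex combinations in which every weight is strictly positive.'' Strictly speaking, a point of the relative interior may also admit representations with some zero weights when the $\bb_i$ are not in convex position; the correct (and sufficient) statement is that a point lies in the relative interior if and only if it \emph{admits} a representation with all weights strictly positive. That is what your argument actually uses, so the logic is unaffected. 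The edge cases $\mathcal{J}=[n]$ and $\mathcal{J}=\emptyset$ are handled correctly (the latter via $c=0$, $\alpha>0$, which produces the relation $\sum_i \bb_i = \mathbf 0$ coming from the all-ones row of $\bar V$).
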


Let $\mathcal{J} \subset [n]$ such that $\conv \{\bv_i \mid i \in \mathcal{J} \}$ is a face (resp. facet) of $P$. Then we call $\{ \bv_j \mid j \not\in \mathcal{J}\}$ a \emph{co-face} (resp. \emph{co-facet}) of $P$.

\section{The Generalized Cut Polytope}
This section introduces  generalized cut polytopes for simplicial complexes. We show that they are isomorphic to the correlation polytopes and hence binary marginal polytopes. We also show that the switching operation in \cite[Chapter 26.3]{cutsDeza1997} extends to the generalized cut polytope. 

\subsection{Motivation and Construction} 
In the previous section we saw that when the simplicial complex $\Delta$ is a graph, we have
$\Marg(\Delta) \cong \Corr(\Delta) \cong \Cut(\Hat{\Delta})$. The last isomorphism was given by the covariance map, so a natural candidate for a generalization of the cut polytope is the image of the correlation polytope under a generalized covariance map. We begin with this definition. 



\begin{defn}
\label{def:gencovmap}
Given $\D$ a simplicial complex with ground set $[n]$, the  \emph{generalized covariance map} $\phi_{\D}$ is the linear transformation from $\R^{\overline{\Delta}}$ to itself defined by
\[
[\phi_{\Delta}(\bfy)]_F = \sum_{\substack{H: \\  \emptyset\neq H \subset F}} (-2)^{\#H -1} y_H.
\]
\end{defn}

\begin{ex}\label{Ex:RunningCov}
Consider the simplicial complex $\Delta = [123][234]$ pictured in Figure \ref{fig:turtle}. The generalized covariance map is the linear transformation defined by the matrix,
\[
\Phi_{\Delta} = \begin{blockarray}{cccccccccccc}
& 1 & 2 & 3 & 4 & 12 & 13 & 23 & 24 & 34 & 123 & 234 \\
\begin{block}{c(ccccccccccc)}
1&1&0&0&0&0&0&0&0&0&0&0\\
2&0&1&0&0&0&0&0&0&0&0&0\\
3&0&0&1&0&0&0&0&0&0&0&0\\
4&0&0&0&1&0&0&0&0&0&0&0\\
12&1&1&0&0&-2&0&0&0&0&0&0\\
13&1&0&1&0&0&-2&0&0&0&0&0\\
23&0&1&1&0&0&0&-2&0&0&0&0\\
24&0&1&0&1&0&0&0&-2&0&0&0\\
34&0&0&1&1&0&0&0&0&-2&0&0\\
123&1&1&1&0&-2&-2&-2&0&0&4&0\\
234&0&1&1&1&0&0&-2&-2&-2&0&4\\
\end{block}
\end{blockarray}.
\]
\end{ex}

In \Cref{thm:dim} we will show that the image of $\Corr(\Delta)$ under the generalized covariance map is the following polytope. 

\begin{defn}
The \emph{generalized cut polytope} of $\D$, denoted $\Gcut(\D)$, is a $0/1$-polytope in $\R^{\overline{\D}}$ defined as the convex hull of vertices $\{\bfd^{S}~|~ S\subset [n]\}$, where for every $F \in \bar\D$, the $F$-coordinate of $\bfd^S$ is defined by
\[
d^S_F = \begin{cases}
1 & \text{ if } \#(F\cap S) \text{ is odd} \\
0 &  \text{ if } \#(F\cap S) \text{ is even}.
\end{cases}
\]
The matrix whose columns are the vertices $\bd^S$ is denoted $D_{\D}$.
\end{defn}

\begin{ex}\label{Ex:RunningCut}
Again consider the simplicial complex $\Delta = [123][234]$. Then the generalized cut polytope $\Gcut(\Delta)$ is the convex hull of the columns of the matrix,
\[
D_{\Delta} = \begin{blockarray}{ccccccccccccccccc}
&\emptyset&1&2&3&4&12&13&14&23&24&34&123&124&134&234&1234 \\
\begin{block}{c(cccccccccccccccc)}
1&0&1&0&0&0&1&1&1&0&0&0&1&1&1&0&1\\
2&0&0&1&0&0&1&0&0&1&1&0&1&1&0&1&1\\
3&0&0&0&1&0&0&1&0&1&0&1&1&0&1&1&1\\
4&0&0&0&0&1&0&0&1&0&1&1&0&1&1&1&1\\
12&0&1&1&0&0&0&1&1&1&1&0&0&0&1&1&0\\
13&0&1&0&1&0&1&0&1&1&0&1&0&1&0&1&0\\
23&0&0&1&1&0&1&1&0&0&1&1&0&1&1&0&0\\
24&0&0&1&0&1&1&0&1&1&0&1&1&0&1&0&0\\
34&0&0&0&1&1&0&1&1&1&1&0&1&1&0&0&0\\
123&0&1&1&1&0&0&0&1&0&1&1&1&0&0&0&1\\
234&0&0&1&1&1&1&1&1&0&0&0&0&0&0&1&1\\
\end{block}
\end{blockarray}.
\]
Let $V_{\Delta}$ be the matrix of the correlation polytope of $\Delta$ and let $\Phi_{\Delta}$ be the generalized covariance map as in  \Cref{Ex:RunningCov}. Then we note that $\Phi_{\Delta} V_{\Delta} = D_{\Delta}.$
\end{ex}

Now we are ready to state the main result of this section. 
\begin{thm}
\label{thm:dim}
The generalized cut polytope, the correlation polytope, and the binary marginal polytope for a simplicial complex $\D$ are isomorphic to each other. Their dimension equals the number of nonempty faces of $\Delta$.
\end{thm}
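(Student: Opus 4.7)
The plan is to establish the three equivalences in sequence. The isomorphism $\Marg(\Delta) \cong \Corr(\Delta)$ is stated as \Cref{Prop:MargToCorr} in the appendix and may be assumed. The bulk of the work is to prove $\Corr(\Delta) \cong \Gcut(\Delta)$ via the generalized covariance map $\phi_\Delta$ of \Cref{def:gencovmap}, and to compute the common dimension by showing $\Corr(\Delta)$ is full-dimensional in $\R^{\bar{\Delta}}$. First I would show that $\phi_\Delta$ is a linear bijection: order the nonempty faces by any linear extension of inclusion (e.g., by increasing cardinality). In this ordering the matrix $\Phi_\Delta$ representing $\phi_\Delta$ is lower triangular, since its $F$-th row has a nonzero entry in column $H$ only when $\emptyset \neq H \subseteq F$, and the diagonal entry equals $(-2)^{\#F - 1} \neq 0$. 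Hence $\Phi_\Delta$ is invertible.

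Next I would verify that $\phi_\Delta$ sends the vertex $\bv^S$ of $\Corr(\Delta)$ to the vertex $\bfd^S$ of $\Gcut(\Delta)$ for every $S \subseteq [n]$. Fixing $S$ and $F \in \bar{\Delta}$, and using $v^S_H = 1$ iff $H \subseteq S$, we obtain
\[
[\phi_\Delta(\bv^S)]_F \;=\; \sum_{\substack{H :\\ \emptyset \neq H \subseteq F \cap S}} (-2)^{\#H - 1}.
\]
Letting $k = \#(F \cap S)$ and grouping by cardinality, this reduces to
\[
\sum_{j=1}^{k} \binom{k}{j}(-2)^{j-1} \;=\; -\tfrac{1}{2}\bigl((1-2)^k - 1\bigr) \;=\; \tfrac{1 - (-1)^k}{2},
\]
which equals $1$ when $k$ is odd and $0$ otherwise (including $k = 0$). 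This matches $d^S_F$, so $\phi_\Delta(\bv^S) = \bfd^S$, and $\phi_\Delta$ carries the vertex set of $\Corr(\Delta)$ onto that of $\Gcut(\Delta)$. Together with the invertibility of $\phi_\Delta$, this yields $\phi_\Delta(\Corr(\Delta)) = \Gcut(\Delta)$.

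For the dimension, since $\phi_\Delta$ is invertible it suffices to show $\dim \Corr(\Delta) = \#\bar{\Delta}$. Because $\bv^\emptyset$ is the zero vector, it is enough to exhibit $\#\bar{\Delta}$ linearly independent vertices, which together with $\bv^\emptyset$ will be affinely independent. Take $\{\bv^F : F \in \bar{\Delta}\}$: ordering both rows and columns by increasing cardinality, the resulting square matrix has $(G, F)$-entry equal to $1$ iff $G \subseteq F$, hence is upper triangular with $1$'s on the diagonal and is invertible. The main obstacle is the binomial identity in the middle step---recognizing $\sum_{j=1}^{k}\binom{k}{j}(-2)^{j-1}$ as a parity indicator for $k = \#(F\cap S)$; once this is in hand, both the invertibility of $\Phi_\Delta$ and the full-dimensionality of $\Corr(\Delta)$ reduce to triangularity in the inclusion order on $\bar{\Delta}$.
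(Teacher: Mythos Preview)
Your proof is correct and follows the same overall strategy as the paper: show $\Phi_\Delta$ is invertible by triangularity in the inclusion order, verify $\phi_\Delta(\bv^S)=\bd^S$ via the binomial theorem, cite the appendix for $\Marg(\Delta)\cong\Corr(\Delta)$, and read off the dimension from full-dimensionality of $\Corr(\Delta)$. Your execution is slightly cleaner in two places: where the paper proves $\phi_\Delta(\bv^S)_F=d^S_F$ by induction on $\#F$ with a case split on whether $F\subset S$, you observe directly that the nonzero terms of the sum are exactly those indexed by $\emptyset\neq H\subseteq F\cap S$ and apply the binomial identity once with $k=\#(F\cap S)$; and where the paper merely asserts that $\Corr(\Delta)$ is full-dimensional, you supply an explicit argument by showing that the square matrix with columns $\{\bv^F:F\in\bar\Delta\}$ is upper triangular with ones on the diagonal.
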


\begin{proof}

Fix a simplical complex $\D$. The matrix $\Phi_{\D}\in \R^{\bar\D \times \bar\D}$ for the generalized covariance map has entries
\[
(\Phi_{\D})^H_F= \begin{cases}{}
(-2)^{\#H -1} & if \: H\subset F\\
0 & if \: H\not\subset F.
\end{cases}
\]
First, the transformation $\phi_{\D}$ is an isomorphism as the matrix $\Phi_{\D}$ is a lower triangular matrix with non-zero entries on the diagonal. It remains to show that the image of  the correlation polytope of $\D$ under this map is the generalized polytope of $\D$. 

Fix $S\subset [n]$. We will prove that $\phi_{\D}$ sends the vertex $\bfv^S$ of $\Corr(\D)$ to the vertex $\bfd^S$ of $\Gcut(\D)$. Take $F\in\bar\D$. If $\#F=1$ then $\phi_{\D}(\bfv^S)_F=(-2)^0 v^S_F$ since $F$ is the only non-empty subset of itself. Hence, 
\[
		\phi(\bfv^S)_F = \begin{cases}
			1 & \text{ if } F \subset S \\
			0 & \mathrm{else}
		\end{cases} \quad = \quad \begin{cases}
 			1 & \text{ if } \#(F\cap S) \text{ is odd} \\
			0 &  \text{ if } \#(F\cap S) \text{ is even} 
		\end{cases}
		 \quad = \quad d^S_F.
\]

We proceed by induction. Assume that $\phi_{\D}(\bfv^S)_F=d^S_F$ for any $F$ of size less than $k,~1\leq k< n$. Let $F$ be a  non-empty set of size $k$ in $\D$. Without loss of generality let $F=[k]$. \\
\textit{Case 1}: Suppose that $F \not\subset S$. Without loss of generality, assume that  $k$ is not in $ S$. Since $v^S_{H} = 0$ for any non-empty subset $H$ of $F$ that contains $k$, we have the following:
	\begin{align*}
		\phi_{\D}(\bfv^S)_{F}
		&= \sum_{\substack{\emptyset\neq H\subset F,\\ k\notin H}}(-2)^{\# H-1}v^S_{H}
		 &=\sum_{\emptyset\neq H\subset [k-1]}(-2)^{\# H-1}v^S_{H}
		&\quad =  	\phi_{\Delta}(\bfv^S)_{[k-1]}.
		\end{align*}
		
The induction hypothesis on the size of $F$ and that $F\cap S=[k-1]\cap S$ induces the following:			
		\[
		\phi_{\Delta}(\bfv^S)_{[k-1]} \quad = \quad \begin{cases}
			1 & \text{ if } \#([k-1]\cap S) \text{ is odd} \\
			0 &  \text{ if } \#([k-1]\cap S) \text{ is even} 
		\end{cases} \\ \quad  = \quad \begin{cases}
			1 & \text{ if } \#(F\cap S) \text{ is odd} \\
			0 &  \text{ if } \#(F\cap S) \text{ is even},
		\end{cases}
	\]
which ends this case.\\ 			
\textit{Case 2}: If $F \subset S$, then $\#(F \cap S) = k$, and for all $H \subset F$ we have that  $v^S_{H} = 1$. Hence,
	\begin{align*}
		\phi_{\D}(\bfv^S)_F &= \sum_{\substack{H \subset F \\ H\neq \emptyset}} (-2)^{\#H - 1}
		 \quad = \quad  \sum_{i=1}^k \binom{k}{i} (-2)^{i-1}
		\quad = \quad \frac{(1+(-2))^k-1}{-2}
&= \begin{cases}
			1 & \text{ if } k \text{ is odd} \\
			0 &  \text{ if } k \text{ is even}, 
		\end{cases}
\end{align*}
as needed.

  The polytopes $\Corr(\D)$ and $\Marg(\D)$ are isomorphic  by \cite[Proposition~19.1.20]{algstat2018}.  The dimension of these polytopes being  $\#\D-1$ is a consequence of the fact that the correlation polytope is full-dimensional. 
\end{proof}

Note that for a graph $\Gamma$ on $n$ vertices, we do \emph{not} have that $\mathrm{\Gcut}(\Gamma)\cong \mathrm{Cut}(\Gamma)$. 
However, we do have that $\Gcut(\Gamma) \cong \mathrm{Cut}(\hat{\Gamma})$. We can see this by simply  relabeling the $ \{i,n+1\}$ coordinates of $\mathrm{Cut}(\hat{\Gamma})$ with $\{i\}$. One can check this using the simplicial complex in \Cref{ex:graphCutPoly}.

\subsection{The Switching Operation}
\label{sec:switching}
Here we describe the switching operation on the facets of the generalized cut polytope. As described in \cite[Section~26.3]{cutsDeza1997}, this is a consequence of the fact that the symmetric difference of two cuts is again a cut, and that this property applies more generally to the set families that are closed under taking symmetric differences. We first present the switching on the facets of the generalized cut polytope, and then prove that this operator yields new facet-defining inequalities.

\begin{defn}
Let $\bfa \cdot \bfx\leq a_0$ be a valid inequality for $\Gcut(\D)$, and let $I \subset [n]$. We define the map  $\bfa^{(I)}$ on $R^{\bar\D}$ by $a^{(I)}_{F} = (-1)^{\#(I \cap F)} a_F.$
The \emph{switching} of the inequality $\bfa \cdot \bfx \leq a_0$ with respect to the set $I$ is the inequality
\[
\bfa^{(I)} \cdot \bfx \leq a_0 - \bfa \cdot \bfd^I.
\]

More generally, given a set of subsets $\mathcal{I}$ in $\bar\D$ and a valid inequality $\bfa \cdot \bfx \leq a_0$ for  $\Gcut(\D)$ define  $\switch_{\mathcal{I}}(\bfa \cdot \bfx \leq a_0)=\{\bfa^{(I)} \cdot \bfx \leq a_0 - \bfa \bfd^I\:|\: I\in\mathcal{I}\}.$
\end{defn}

This notation diverges slightly from that given in \cite[Chapter~26]{cutsDeza1997}. Indeed, our definition of switching with respect to $I$ corresponds to a switching with respect to $\{T \mid \#(T \cap I) \text{ odd}\}$ in the language of \cite{cutsDeza1997}.

For each $S \subset [n]$, we denote by $\supp(\bfd^S)$ the support of $\bfd^S$. That is, $\supp(\bfd^S) = \{ F \in \D \mid \#(F \cap S) \text{ is odd}\}.$

\begin{prop}\label{Prop:SymmDiff}
The set of all supports of vertices of $\Gcut(\D)$ is closed under taking symmetric differences.
\end{prop}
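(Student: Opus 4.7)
The plan is to show that if $S_1, S_2 \subset [n]$, then $\supp(\bfd^{S_1}) \triangle \supp(\bfd^{S_2}) = \supp(\bfd^{S_1 \triangle S_2})$, so that the natural candidate vertex $\bfd^{S_1 \triangle S_2}$ witnesses closure. In other words, I would reduce the claim to a congruence mod $2$ between parities of intersections with $F$.

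Concretely, fix $F \in \bar\Delta$. By definition, $F \in \supp(\bfd^{S_1}) \triangle \supp(\bfd^{S_2})$ iff exactly one of $\#(F \cap S_1)$ and $\#(F \cap S_2)$ is odd, which is equivalent to $\#(F \cap S_1) + \#(F \cap S_2)$ being odd. The key arithmetic identity I would use is
\[
\#(F \cap S_1) + \#(F \cap S_2) = \#(F \cap (S_1 \triangle S_2)) + 2\,\#(F \cap S_1 \cap S_2),
\]
which follows from inclusion–exclusion applied to the sets $F \cap S_1$ and $F \cap S_2$ (their union is $F \cap (S_1 \cup S_2)$ and their intersection is $F \cap S_1 \cap S_2$, while $F \cap (S_1 \triangle S_2) = F \cap (S_1 \cup S_2) \setminus F \cap S_1 \cap S_2$). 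Reducing modulo $2$ gives
\[
\#(F \cap S_1) + \#(F \cap S_2) \equiv \#(F \cap (S_1 \triangle S_2)) \pmod{2}.
\]

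From this congruence, $F$ lies in the symmetric difference of the two supports if and only if $\#(F \cap (S_1 \triangle S_2))$ is odd, which by the definition of $\Gcut(\Delta)$ is exactly the condition for $F \in \supp(\bfd^{S_1 \triangle S_2})$. Since $S_1 \triangle S_2 \subset [n]$, the vertex $\bfd^{S_1 \triangle S_2}$ is one of the vertices of $\Gcut(\Delta)$, and its support realizes the symmetric difference. There is no substantive obstacle here: the result is a clean parity computation, with the only thing to be careful about being the inclusion–exclusion identity above. The only mild subtlety is that $F$ is restricted to $\bar\Delta$, but since the same identification holds face-by-face, restricting to $\bar\Delta$ on both sides preserves the equality.
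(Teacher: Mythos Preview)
Your proof is correct and follows essentially the same approach as the paper: both establish the identity $\supp(\bfd^{S_1}) \triangle \supp(\bfd^{S_2}) = \supp(\bfd^{S_1 \triangle S_2})$ via the parity congruence $\#(F \cap S_1) + \#(F \cap S_2) \equiv \#(F \cap (S_1 \triangle S_2)) \pmod 2$. The only cosmetic difference is that the paper phrases this congruence using indicator vectors over $\mathbb{F}_2$ (writing $\bfi^{S_1 \triangle S_2} = \bfi^{S_1} + \bfi^{S_2}$ and taking dot products), whereas you derive it directly from inclusion--exclusion over the integers.
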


To facilitate this proof and many of the proofs in the present paper, we introduce the following notation. Denote by $\mathbb{F}_2$ the finite field of order 2. For each $S \subset [n]$, denote by $\bfi^S$ the indicator vector of $S$ in the vector space $\mathbb{F}^n_2$.

\begin{proof}[Proof of \Cref{Prop:SymmDiff}]
Let $S, T \subset [n]$ such that $S \neq T$. Then we have $\bfi^{S \triangle T} = \bfi^S + \bfi^T$.  We will show that $\supp(\bd^S) \triangle \supp(\bd^T) = \supp(\bd^{S \triangle T})$. 

Note that $\bfi^S \cdot \bfi^A = 0$ if $\#(S \cap A)$ is even and $\bfi^S \cdot \bfi^A = 1$ if $\#(S \cap A)$ is odd, and similarly for $T$. We have $\bfi^{S \triangle T} \cdot \bfi^A = \bfi^S \cdot \bfi^A + \bfi^T \cdot \bfi^A$. So $\#((S \triangle T) \cap A)$ is odd if and only if $\#(S \cap A)$ and $\#(T \cap A)$ have opposite parities. Therefore $\supp(\bd^S) \triangle \supp(\bd^T) = \supp(\bd^{S \triangle T})$.
\end{proof}

\begin{cor}[\cite{cutsDeza1997}, Corollary 26.3.5]\label{Cor:SwitchingWorks}
Let $I \subset [n]$. The inequality $\ba \cdot \bx \leq a_0$ is valid (resp. facet-defining) for $\Gcut(\Delta)$ if and only its switching with respect to $I$, $\ba^{(I)} \cdot \bx \leq a_0 - \ba \cdot \bd^I$ is valid (resp. facet-defining) for $\Gcut(\Delta)$.
\end{cor}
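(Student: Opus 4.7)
The plan is to realize the switching operation as pullback along an affine automorphism $\sigma_I$ of $\Gcut(\Delta)$. Once such an automorphism is in hand, both implications are free: affine automorphisms permute the facets (and, a fortiori, the valid inequalities) of a polytope, and the pullback of the halfspace $\ba \cdot \bx \leq a_0$ along $\sigma_I$ will be shown to recover exactly the switched inequality $\ba^{(I)} \cdot \bx \leq a_0 - \ba \cdot \bd^I$.

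To construct $\sigma_I$, I would define it coordinate-wise on $\R^{\bar\Delta}$ by
\[
\sigma_I(\bx)_F \;=\; (-1)^{\#(F \cap I)}\, x_F \;+\; d^I_F,
\]
so that $\sigma_I$ acts as the identity on coordinates $F$ with $\#(F \cap I)$ even and as the affine involution $x_F \mapsto 1 - x_F$ on the remaining coordinates. This is an affine involution of $\R^{\bar\Delta}$. The key step is to verify that $\sigma_I(\bd^S) = \bd^{S \triangle I}$ for every $S \subset [n]$, which reduces to a two-case check on the parity of $\#(F \cap I)$, using exactly the $\mathbb{F}_2$-observation from the proof of \Cref{Prop:SymmDiff}: namely, $\#(F \cap (S \triangle I))$ has the same parity as $\#(F \cap S)$ when $\#(F \cap I)$ is even, and the opposite parity when $\#(F \cap I)$ is odd. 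Since $S \mapsto S \triangle I$ is a bijection on $2^{[n]}$, the map $\sigma_I$ permutes the vertex set of $\Gcut(\Delta)$ and hence restricts to an affine automorphism of the polytope.

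Next I would compute
\[
\ba \cdot \sigma_I(\bx) \;=\; \sum_{F \in \bar\Delta} a_F \bigl( (-1)^{\#(F \cap I)} x_F + d^I_F \bigr) \;=\; \ba^{(I)} \cdot \bx \;+\; \ba \cdot \bd^I,
\]
so the pullback of $\ba \cdot \bx \leq a_0$ along $\sigma_I$ is precisely $\ba^{(I)} \cdot \bx \leq a_0 - \ba \cdot \bd^I$. Because $\sigma_I$ is an involutive affine automorphism of $\Gcut(\Delta)$, it sends valid inequalities bijectively to valid inequalities and facet-defining inequalities bijectively to facet-defining inequalities, proving both directions of the corollary.

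The main obstacle is the vertex-permutation identity $\sigma_I(\bd^S) = \bd^{S \triangle I}$; once this is in hand, the rest of the argument is the one-line display above together with the standard fact that affine automorphisms preserve validity and facets. Notably, no appeal to the cut polytope literature is needed: the corollary follows directly from the definition of $\Gcut(\Delta)$ and the symmetric-difference structure already exploited in \Cref{Prop:SymmDiff}.
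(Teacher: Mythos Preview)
Your proposal is correct. The paper does not supply its own proof of this corollary; it simply cites \cite[Corollary~26.3.5]{cutsDeza1997}, relying on \Cref{Prop:SymmDiff} to verify the hypothesis (closure of the support family under symmetric difference) needed to invoke that general result. Your argument unpacks exactly the mechanism behind that citation: the affine involution $\sigma_I$ you write down is the ``switching reflection'' of \cite[Section~26.3]{cutsDeza1997}, and your verification that $\sigma_I(\bd^S)=\bd^{S\triangle I}$ is the coordinate-by-coordinate restatement of \Cref{Prop:SymmDiff}. The only difference is packaging: the paper outsources the argument to the reference, while you give it self-contained; your version has the virtue of making transparent that the corollary needs nothing from the classical cut-polytope theory beyond the elementary parity observation already present in \Cref{Prop:SymmDiff}.
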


\begin{rmk}\label{mod-switching}
By \cite[Proposition~26.3.6]{cutsDeza1997}, all of the facet-defining inequalities of $\Gcut(\Delta)$ can be obtained via switching from the inequalities defining facets that contain the origin. In the present paper, we often take the reverse perspective; for several simplicial complexes $\Delta$, we characterize those facet-defining hyperplanes that do \emph{not} contain the origin. Since switching an inequality with respect to a fixed set twice yields the original inequality, \cite[Proposition~26.3.6]{cutsDeza1997} implies that every facet-defining hyperplane of $\Gcut(\Delta)$ can also be obtained via switching from those hyperplanes that do not contain the origin. Indeed, given a homogeneous facet-defining inequality $\bfa \bfx \leq 0$ for $\bfa \neq \mathbf{0}$, we can always find an $I$ such that $\bfa \bfd^I \neq 0$ as $\bfa$ defines a proper face of $\Gcut(\Delta)$. Switching with respect to $I$ gives an inhomogeneous inequality.
\end{rmk}

Fix a simplicial complex $\Delta$ and let $\Phi_{\D}$ be the matrix defining the generalized covariance map.
Let $\Psi_{\D}$ be the inverse of $\Phi_{\D}$ as descriped in \Cref{Prop:CutToCorr}. Since $\phi_{\Delta}$ is an isomorphism of $\Corr(\Delta)$ and $\Gcut(\Delta)$ that extends to an automorphism of $\R^{\bar\D}$, the inequality $\bfp \cdot \bfx \leq p_0$ is  facet-defining  for $\Gcut(\Delta)$ if and only if  $(\bfp \Phi_{\Delta}) \cdot \bfy \leq p_0$ is facet-defining for $\Corr(\Delta)$. So we can use the switching operation on $\Gcut(\Delta)$ to uncover the following symmetries among the facets of $\Corr(\Delta)$.

\begin{cor}\label{Cor:CorrSwitching}
Let $\bfq \cdot \bfy \leq q_0$ be facet-defining for $\Corr(\Delta)$. Fix $I \subset [n]$. Let $\Phi_{\Delta}$ be the matrix representing the generalized covariance map and let $\Psi_{\Delta}$ be its inverse. Define the linear functional $\bq^{[I]} := (\bfq \Psi_{\Delta})^{(I)} \Phi_{\Delta}$. Then $\bfq^{[I]} \by \leq q_0 - (\bfq \Psi_{\Delta}) \bd^I$ is facet-defining for $\Corr(\Delta)$.
\end{cor}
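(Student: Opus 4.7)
The plan is to transfer the switching statement from $\Gcut(\Delta)$ (Corollary~3.5) over to $\Corr(\Delta)$ by means of the linear isomorphism $\phi_\Delta$. Since $\phi_\Delta$ is a bijective linear map whose restriction sends the vertices of $\Corr(\Delta)$ onto the vertices of $\Gcut(\Delta)$ (Theorem~3.3), it induces a bijection between facet-defining inequalities of the two polytopes by pullback/pushforward of the defining functionals. Combining this bijection with the switching already available on $\Gcut(\Delta)$ should immediately produce the claimed inequality.

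Concretely, the first step is to move $\bfq \cdot \by \leq q_0$ to $\Gcut(\Delta)$. Under the change of variables $\bx = \Phi_\Delta \by$, equivalently $\by = \Psi_\Delta \bx$, one has $\bfq \cdot \by = (\bfq \Psi_\Delta) \cdot \bx$. Setting $\bfp := \bfq \Psi_\Delta$, the inequality $\bfp \cdot \bx \leq q_0$ is then valid (respectively facet-defining) on $\Gcut(\Delta)$ if and only if $\bfq \cdot \by \leq q_0$ is on $\Corr(\Delta)$, because $\phi_\Delta$ bijects the vertex sets and preserves affine dependencies. The second step is to invoke Corollary~3.5 on $\bfp \cdot \bx \leq q_0$ with respect to $I$, which yields that
\[
\bfp^{(I)} \cdot \bx \leq q_0 - \bfp \cdot \bd^I
\]
is facet-defining for $\Gcut(\Delta)$.

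The third step pulls this back to $\Corr(\Delta)$ via the same identification $\bx = \Phi_\Delta \by$: the pulled-back inequality is $(\bfp^{(I)} \Phi_\Delta) \cdot \by \leq q_0 - \bfp \cdot \bd^I$, and substituting $\bfp = \bfq \Psi_\Delta$ gives precisely
\[
\bfq^{[I]} \cdot \by \leq q_0 - (\bfq \Psi_\Delta) \cdot \bd^I,
\]
which is the inequality in the corollary. There is no substantive obstacle here; the only care needed is the bookkeeping for row-versus-column conventions, namely that functionals transform by right-multiplication by $\Psi_\Delta$ when the points transform by left-multiplication by $\Phi_\Delta$, which is why $\Psi_\Delta$ (rather than $\Phi_\Delta$) appears inside the definition of $\bfq^{[I]}$ and on the right-hand side.
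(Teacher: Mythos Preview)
Your proposal is correct and follows essentially the same approach as the paper: the paper states just before this corollary that, since $\phi_\Delta$ is an isomorphism of $\Corr(\Delta)$ and $\Gcut(\Delta)$ extending to an automorphism of $\R^{\bar\Delta}$, facet-defining inequalities correspond under $\bfp \mapsto \bfp\Phi_\Delta$, and then the corollary is obtained by transporting the switching of Corollary~\ref{Cor:SwitchingWorks} through this correspondence. Your three-step argument (push forward via $\bfp=\bfq\Psi_\Delta$, switch on $\Gcut(\Delta)$, pull back via $\Phi_\Delta$) is exactly this.
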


We can also study the switching operation on \(\Marg(\Delta)\) by means of the maps between \(\Marg(\Delta),\Corr(\Delta)\), and \(\Gcut(\Delta)\) given in the appendix. Take the linear map $\Omega_{\Delta}$ from Proposition \ref{Prop:MargToCorr} sending $\Marg(\Delta)$ to $\Corr(\Delta)$.
As in the analysis of \(\Corr(\Delta)\), since \(\Omega_\Delta\) is an isomorphism of polytopes, if \(\bq \cdot \bfy \leq q_0\) is a facet-defining inequality for \(\Corr(\Delta)\), then \((\bq \Omega_\Delta) \cdot \bfz \leq q_0\) is a facet-defining inequality for \(\Marg(\Delta)\). Note that \(\Omega_\Delta\) is only an isomorphism when restricted to the affine hull of \(\Marg(\Delta)\). In fact, \(\Omega_\Delta\) as a linear map of vector spaces has non-trivial kernel. Therefore, it may be the case that for some facet-defining inequality \(\bfr \cdot \bfz \leq r_0\) of \(\Marg(\Delta)\) there is no \(\bq \in \R^{\overline{\Delta}}\) so that \(\bfr = \bq \Omega_\Delta\).

Let $M_0$ denote the translation of the affine hull of $\Marg(\Delta)$ to the origin. If we wish to apply the switching operation to the face $F = \{ \bfz \in \Marg(\Delta) \;:\; \bfr \cdot \bfz \leq r_0\}$ of $\Marg(\Delta)$, then we must replace the linear functional $\bfr$ defining the hyperplane \(\bfr \cdot \bfz = r_0\) with its projection $\bar\bfr$ onto $M_0$.

We denote this new hyperplane by \(\overline{\bfr} \cdot \bfz = \overline{r_0}\). Note that it is still a supporting hyperplane of \(F\), and in fact, it is the unique hyperplane which passes through \(F\) and is orthogonal to \(M_0\). 
Since $\Omega_{\D}$ is an isomorphism of $M_0$ and $\R^{\bar\D}$, there exists a $\bfq \in \R^{\bar\D}$ such that $\bar\bfr = \bfq \Omega_{\Delta}$. Let $\Pi_{\Delta}$ be the pseudoinverse of $\Omega_{\Delta}$ given in \Cref{Prop:CorrToMarg}.

\begin{cor}\label{cor:MargSwitching}
Suppose that \(\bfr \cdot \bfz \leq r_0\) is a facet-defining inequality for \(\Marg(\Delta)\) and \(\bfr\) lies in the row space of \(\Omega_{\Delta}\). Fix \(I \subset [n]\) and define the linear functional $\bfr^{\langle I \rangle} := (\bfr \Pi_\Delta \Psi_\Delta)^{(I)}\Phi_\Delta \Omega_\Delta$
and the scalar $r_0^{\langle I \rangle} := r_0  - (\bfr \Pi_\Delta \Psi_\Delta)\cdot \bd^I.$
Then \(\bfr^{\langle I \rangle}\cdot \bfz \leq r_0^{\langle I \rangle}\) is a facet-defining inequality for \(\Marg(\Delta)\).
\end{cor}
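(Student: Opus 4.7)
The plan is to transport the facet-defining inequality along the chain of isomorphisms $\Marg(\Delta) \xrightarrow{\Omega_\Delta} \Corr(\Delta) \xrightarrow{\Psi_\Delta} \Gcut(\Delta)$, apply the switching operation on $\Gcut(\Delta)$ (packaged for $\Corr(\Delta)$ in \Cref{Cor:CorrSwitching}), and then pull the resulting facet-defining inequality back to $\Marg(\Delta)$ via $\Omega_\Delta$. The only delicate step is the first one: lifting $\bfr$ to a functional on the full-dimensional polytope $\Corr(\Delta)$, because $\Marg(\Delta)$ is not full-dimensional in its ambient space.

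First I would verify that $\bfq := \bfr \Pi_\Delta$ satisfies $\bfq \Omega_\Delta = \bfr$. By hypothesis $\bfr$ lies in the row space of $\Omega_\Delta$, so we may write $\bfr = \mathbf{s}\Omega_\Delta$ for some $\mathbf{s}$; the standard pseudoinverse identity $\Omega_\Delta \Pi_\Delta \Omega_\Delta = \Omega_\Delta$ then yields $\bfq \Omega_\Delta = \mathbf{s}\Omega_\Delta \Pi_\Delta \Omega_\Delta = \mathbf{s}\Omega_\Delta = \bfr$, as desired. Next I would check that $\bfq \cdot \bfy \leq r_0$ is facet-defining for $\Corr(\Delta)$: every $\bfy \in \Corr(\Delta)$ equals $\Omega_\Delta \bfz$ for a unique $\bfz \in \Marg(\Delta)$, so $\bfq \cdot \bfy = (\bfq \Omega_\Delta)\cdot \bfz = \bfr \cdot \bfz$, and since $\Omega_\Delta$ is a polytope isomorphism it sends facets to facets. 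Applying \Cref{Cor:CorrSwitching} to $\bfq$ then produces the facet-defining inequality
\[
\bfq^{[I]} \cdot \bfy \leq r_0 - (\bfq \Psi_\Delta) \cdot \bfd^I, \qquad \bfq^{[I]} := (\bfq \Psi_\Delta)^{(I)} \Phi_\Delta,
\]
for $\Corr(\Delta)$. Substituting $\bfy = \Omega_\Delta \bfz$ produces the facet-defining inequality $(\bfq^{[I]}\Omega_\Delta) \cdot \bfz \leq r_0 - (\bfq \Psi_\Delta) \cdot \bfd^I$ for $\Marg(\Delta)$; finally, substituting $\bfq = \bfr \Pi_\Delta$ turns $\bfq^{[I]}\Omega_\Delta$ into $(\bfr \Pi_\Delta \Psi_\Delta)^{(I)} \Phi_\Delta \Omega_\Delta = \bfr^{\langle I\rangle}$ and the right-hand side into $r_0^{\langle I\rangle}$, matching the statement.

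The main obstacle is conceptual rather than computational. Because $\Marg(\Delta)$ is not full-dimensional, the functional $\bfr$ defining a given facet is only determined up to a vector orthogonal to the affine hull, whereas switching is defined naturally on the full-dimensional polytope $\Gcut(\Delta)$. The hypothesis $\bfr \in \mathrm{rowspace}(\Omega_\Delta)$ selects a canonical representative of $\bfr$, and the pseudoinverse relation $\Omega_\Delta \Pi_\Delta \Omega_\Delta = \Omega_\Delta$ is precisely what is needed to lift it unambiguously through $\Pi_\Delta$. Once this lift is in hand, everything else is bookkeeping through the compositions of $\Pi_\Delta$, $\Psi_\Delta$, $\Phi_\Delta$, and $\Omega_\Delta$, which are exactly what the definitions of $\bfr^{\langle I\rangle}$ and $r_0^{\langle I\rangle}$ have absorbed.
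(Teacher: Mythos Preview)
Your argument is correct and follows essentially the same route as the paper: transport the facet-defining inequality through the chain of isomorphisms, apply switching, and transport back. The only cosmetic difference is that you stop at $\Corr(\Delta)$ and invoke \Cref{Cor:CorrSwitching}, whereas the paper pushes all the way to $\Gcut(\Delta)$ and applies \Cref{Cor:SwitchingWorks} directly; also, you justify the lift via the identity $\Omega_\Delta\Pi_\Delta\Omega_\Delta=\Omega_\Delta$, while the paper instead uses $\bfz=\Pi_\Delta\Psi_\Delta\Phi_\Delta\Omega_\Delta\bfz+\bfu^\emptyset$ on $\Marg(\Delta)$ together with $\bfr\cdot\bfu^\emptyset=0$. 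These are two sides of the same coin, since $\Omega_\Delta\bfu^\emptyset=\bfv^\emptyset=\mathbf{0}$ and the vertices of $\Corr(\Delta)$ span $\R^{\bar\Delta}$, which forces $\Omega_\Delta\Pi_\Delta=I$ and hence the pseudoinverse identity you invoke.
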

\begin{proof}
Suppose that \(\bfr \cdot \bfz \leq r_0\) is a facet-defining inequality of \(\Marg(\Delta)\) so that \(\bfr\) lies in the row space of \(\Omega\). We have that \(\bfz  = \Pi_\Delta \Psi_\Delta \Phi_\Delta \Omega_\Delta \bfz + \bfu^\emptyset\). Note that since $\bfr \in \mathrm{rowspan}(\Omega_{\Delta})$,
we have $\bfr \cdot \bfu^{\emptyset} = 0$ by definition of $\Omega_{\Delta}$. If we set \(\bfp = \bfr \Pi_\Delta \Psi_\Delta\), \(\bfx = \Phi_\Delta \Omega_\Delta \bfz\), and \(p_0 = r_0 - \bfr \cdot \bfu^\emptyset\), then \(\bfp \cdot \bfx \leq p_0\) is a facet-defining inequality of \(\Gcut(\Delta)\). Then 
\[
    \bfp^{(I)} \cdot \bfx = (\bfr \Pi_\Delta\Psi_\Delta)^{(I)}\Phi_\Delta\Omega_\Delta \bfx 
     = \bfr^{\langle I \rangle} \cdot \bfz
\]
and 
\[
    p_0^{(I)} = p_0 - \bfp \cdot \bd^I 
            = r_0  - (\bfr \Pi_\Delta \Psi_\Delta)\cdot \bd^I.
\]
Thus, \(\bfr^{\langle I \rangle} \cdot \bfz \leq r_0^{\langle I \rangle}\) is a facet-defining inequality for \(\Marg(\Delta)\).
\end{proof}

The analogue of switching for correlation and binary marginal polytopes is quite complicated to describe in terms of the coordinates of the facet-defining linear functionals. However, it is easy to determine the sets of vertices that lie on these ``switched" facets due to  \cite[Lemma 26.3.3]{cutsDeza1997} and the isomorphisms from $\Marg(\Delta)$ and $\Corr(\D)$ to  $\Gcut(\Delta)$ that send $\bu^S$  and $\bv^S$ to $\bd^S$.

\begin{cor}
Let $\mathcal{S}$ be a subset of $2^{[n]}$ such that $\conv\{ \bu^S \mid S \in \mathcal{S}\}$ is a face of $\Marg(\Delta)$ (resp. $\Corr(\D)$). Let $I \subset [n]$. Then $\conv \{ \bu^{S \triangle I} \mid S \in \mathcal{S} \}$ is also a face of $\Marg(\Delta)$ (resp. $\Corr(\D)$).
\end{cor}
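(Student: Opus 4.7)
The plan is to leverage the isomorphisms of \Cref{thm:dim}, which send $\bu^S \mapsto \bv^S \mapsto \bd^S$ at the vertex level, to reduce both the $\Marg(\Delta)$ and $\Corr(\Delta)$ cases to the analogous statement for $\Gcut(\Delta)$: if $\conv\{\bd^S \mid S \in \mathcal{S}\}$ is a face of $\Gcut(\Delta)$, then so is $\conv\{\bd^{S \triangle I} \mid S \in \mathcal{S}\}$. Since an affine isomorphism of polytopes sends faces to faces and respects the chosen vertex labeling, this reduction is immediate.

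For the $\Gcut(\Delta)$ statement, I would choose a valid inequality $\bfa \cdot \bfx \leq a_0$ whose equality set on $\Gcut(\Delta)$ is the given face, so that $\bfa \cdot \bd^S = a_0$ for $S \in \mathcal{S}$ and $\bfa \cdot \bd^S < a_0$ otherwise. By \Cref{Cor:SwitchingWorks}, the switched inequality $\bfa^{(I)} \cdot \bfx \leq a_0 - \bfa \cdot \bd^I$ is again valid for $\Gcut(\Delta)$, so its equality set is a face. The problem then reduces to showing the identity
\[
\bfa^{(I)} \cdot \bd^{S \triangle I} \;=\; \bfa \cdot \bd^S \;-\; \bfa \cdot \bd^I
\]
for every $S \subset [n]$, since then $\bfa \cdot \bd^S = a_0$ holds precisely when $\bfa^{(I)} \cdot \bd^{S \triangle I}$ equals $a_0 - \bfa \cdot \bd^I$, identifying the vertices on the switched face as exactly $\{\bd^{S \triangle I} \mid S \in \mathcal{S}\}$.

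The identity is a short coordinatewise calculation. Using $d^I_F \in \{0,1\}$, one has $a^{(I)}_F = (-1)^{\#(I \cap F)} a_F = (1 - 2 d^I_F) a_F$. From \Cref{Prop:SymmDiff}, the support identity $\supp(\bd^{S \triangle I}) = \supp(\bd^S) \triangle \supp(\bd^I)$ translates at the coordinate level to $d^{S \triangle I}_F = d^S_F + d^I_F - 2 d^S_F d^I_F$. Multiplying these expressions and using $(d^I_F)^2 = d^I_F$ causes the quadratic cross terms to cancel, leaving $a^{(I)}_F d^{S \triangle I}_F = a_F(d^S_F - d^I_F)$, which sums over $F$ to the claimed identity.

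The main obstacle is simply keeping this expansion clean; conceptually, everything else is packaged by \Cref{Cor:SwitchingWorks}, \Cref{thm:dim}, and \Cref{Prop:SymmDiff}. One minor subtlety is that the statement concerns arbitrary faces, not just facets, but the switching operation is defined for all valid inequalities and preserves validity, so the argument applies uniformly in dimension. This also recovers \cite[Lemma~26.3.3]{cutsDeza1997} in our generalized setting.
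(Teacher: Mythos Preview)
Your proposal is correct and follows essentially the same approach as the paper: transport the statement to $\Gcut(\Delta)$ via the vertex-preserving isomorphisms of \Cref{thm:dim}, then invoke the switching behavior on vertices. The only difference is that the paper simply cites \cite[Lemma~26.3.3]{cutsDeza1997} for the fact that switching with respect to $I$ sends the face with vertex set $\{\bd^S : S \in \mathcal{S}\}$ to the face with vertex set $\{\bd^{S\triangle I} : S \in \mathcal{S}\}$, whereas you re-derive that lemma via the coordinate identity $\bfa^{(I)}\cdot\bd^{S\triangle I} = \bfa\cdot\bd^S - \bfa\cdot\bd^I$.
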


The following examples compute the $\cH$-description for the generalized cut polytope of a simplex and the disjoint union of two simplices.
\begin{prop}\label{Ex:HSimplex}
The generalized cut polytope for the simplex $2^{[n]}$ has $\cH$-description   \begin{align}
  \label{eq:Hsimplex}
\switch_{2^{[n]}}\left(\sum_{\emptyset\neq F\subset [n]}x_{F}\leq 2^{n-1}\right).
 \end{align}
\end{prop}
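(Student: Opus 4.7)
The plan is to recognize $\Gcut(2^{[n]})$ as a simplex in its ambient space, identify one facet by direct computation, and then generate the remaining facets by the switching operation.

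First, by \Cref{thm:dim}, the dimension of $\Gcut(2^{[n]})$ equals $\#\overline{2^{[n]}} = 2^n - 1$, which is exactly the dimension of the ambient space $\R^{\overline{2^{[n]}}}$. The map $S \mapsto \bfd^S$ is injective, since for any distinct $S, S'$ and any $a \in S \triangle S'$, the singleton face $\{a\}$ is in $\overline{2^{[n]}}$ and $d^S_{\{a\}} \neq d^{S'}_{\{a\}}$. So $\Gcut(2^{[n]})$ has exactly $2^n$ vertices in dimension $2^n - 1$, hence is a $(2^n - 1)$-simplex. In particular, its facets are precisely the convex hulls of the $2^n$ possible $(2^n - 1)$-element subvertex sets, so there are exactly $2^n$ facets.

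Next, I would show that $\sum_{\emptyset \neq F \subset [n]} x_F \leq 2^{n-1}$ defines the facet opposite the vertex $\bfd^\emptyset = \mathbf{0}$. For $S \subset [n]$ with $|S| = s \geq 1$, the nonempty subsets $F \subset [n]$ with $|F \cap S|$ odd are in bijection with pairs $(A, B)$ where $A \subset S$ has odd size and $B \subset [n] \setminus S$ is arbitrary, giving a count of $2^{s-1} \cdot 2^{n-s} = 2^{n-1}$. Therefore $\sum_F d^S_F = 2^{n-1}$ for every $S \neq \emptyset$, whereas this sum is $0$ at $S = \emptyset$. Thus the hyperplane $\sum_F x_F = 2^{n-1}$ contains every vertex except $\bfd^\emptyset$ and places $\bfd^\emptyset$ on the feasible side, so it cuts out the facet opposite $\bfd^\emptyset$.

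Finally, \Cref{Cor:SwitchingWorks} guarantees that each inequality in $\switch_{2^{[n]}}(\sum_F x_F \leq 2^{n-1})$ is facet-defining. By the vertex-level form of switching recorded in the last corollary preceding the statement, switching with respect to $I$ sends the face with vertex set $\{\bfd^S : S \neq \emptyset\}$ to the face with vertex set $\{\bfd^{S \triangle I} : S \neq \emptyset\} = \{\bfd^T : T \neq I\}$, which is exactly the facet opposite $\bfd^I$. Letting $I$ range over all of $2^{[n]}$ therefore recovers each of the $2^n$ facets of the simplex precisely once, yielding the $\cH$-description~(\ref{eq:Hsimplex}). The only real content is the binomial count in the second paragraph; once that is in hand, the conclusion is forced by the simplex structure of $\Gcut(2^{[n]})$.
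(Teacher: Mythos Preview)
Your proof is correct and follows essentially the same route as the paper: recognize that $\Gcut(2^{[n]})$ is a $(2^n-1)$-simplex, verify that $\sum_F x_F \leq 2^{n-1}$ cuts out the facet opposite $\bfd^\emptyset$, and then switch over all $I\subset[n]$ to obtain the remaining facets. The only cosmetic difference is that the paper establishes the simplex structure via the isomorphism $\Gcut(2^{[n]})\cong\Marg(2^{[n]})$ (the standard simplex) and asserts the value $2^{n-1}$ without justification, whereas you argue directly from the vertex count and supply the binomial count $2^{s-1}\cdot 2^{n-s}$; also note that the ``last corollary preceding the statement'' is phrased for $\Marg$ and $\Corr$, so the cleaner citation for the vertex-level switching on $\Gcut$ is \cite[Lemma~26.3.3]{cutsDeza1997}, as the paper does.
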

\begin{proof}
The binary marginal polytope for $\D=2^{[n]}$ is the standard simplex in $\mathbb{R}^{2^{[n]}}$.
Since  $\Gcut(2^{[n]})$ is isomorphic to $\Marg(2^{[n]})$, it has  $2^n$ facets, and each facet contains $2^{n}-1$ vertices.  
The  vertices $\{\bfd^{S}~|~  S\subset [n]\}$ of the generalized cut polytope satisfy inequalities 
 \[\sum_{\emptyset\neq F\subset [n]}d^{\{\emptyset\}}_{F}=0~ \text{ and } ~ \sum_{\emptyset\neq F\subset [n]}d^S_{F}=2^{n-1},~ for ~\emptyset\neq S\subset [n], \]
which implies that 
\begin{align}
\label{eq:simplex1}
\sum_{\emptyset\neq F\subset [n]}x_{F}\leq 2^{n-1}    
\end{align}
is one of the facet-defining inequalities of $\Gcut(\D)$. 
Take the inequalities obtained by the switching operation on \Cref{eq:simplex1} with respect to any subset $I$ of $[n]$. There are $2^n$ of them. They are of the form 
\begin{align}
 \label{eq:hsimplex}
\sum_{\emptyset\neq F\subset [n]}(-1)^{\#(F\cap I)}x_{F}\leq 2^{n-1}-\mathbf{1}\cdot \bfd^I,~ I\subset [n],
\end{align}
where $\mathbf{1}$ is the row vector with all entries one in $\R^{\bar\D}$.  By \cite[Lemma~26.3.3]{cutsDeza1997}, the switching of \Cref{eq:hsimplex} with respect to $S$ yields the facet-defining inequality for the co-facet $\{\bd^S\}$.
Therefore,  \Cref{eq:Hsimplex} is the $\cH$-representation for $\Gcut(2^{[n]})$.  These inequalities can be written as 
\begin{align*}
&\sum_{\emptyset\neq F\subset [n]}x_{\emptyset \sqcup F}\leq 2^{n-1} \text{ and }
\sum_{\emptyset\neq F\subset [n]}(-1)^{\#(F\cap S)}x_{\emptyset \sqcup F}\leq 0, \text{ for } \emptyset\neq S\subset [n]. \qedhere
\end{align*}
\end{proof}

Let  $\Gcut(\D)$ and $\Gcut(\Gamma)$ be the generalized cut polytopes for simplicial complexes $\D$ and $\Gamma$ with disjoint ground sets. The product $\Gcut(\D)\times \Gcut(\Gamma)$ is  generalized cut polytope for their disjoint union $\Gcut(\D\sqcup\Gamma)$, since vertices of the second one are all vectors of the form $[\mathbf{d}_{\D}^S~~\mathbf{d}_{\Gamma}^T]^{tr}$ for any vertex $\mathbf{d}_{\D}^S$ of $\Gcut(\D)$ and $\mathbf{d}_{\Gamma}^T$ of $\Gcut(\Gamma)$. Hence, given   $A\bfx_{\bar\D}\leq \bfa$ and $B\bfx_{\bar\Gamma}\leq \bfb$ be the half-space descriptions for $\Gcut(\D)$ and $\Gcut(\Gamma)$, respectively, $\Gcut(\D\sqcup\Gamma)$ has the $\cH$-description
\begin{align}
\label{eq:twoSimplex}
    \begin{bmatrix} 
    A& \mathbf{0}\\
    \mathbf{0}&B
    \end{bmatrix} \begin{bmatrix}
    \bfx_{\bar\D}\\
    \bfx_{\bar\Gamma}
    \end{bmatrix}\leq \begin{bmatrix}
    \bfa\\
     \bfb
    \end{bmatrix}.
\end{align}

\begin{ex}\label{Ex:HDisjointUnionSimplices} The simplicial complex obtained by taking the disjoint union $2^{[m]} \sqcup 2^{[n]}$ is of particular interest since it is one key building block for unimodular simplicial complexes \cite{unimodHierModels2017}. \Cref{Ex:HSimplex} induces the following   $\cH$-description of the polytope $\Gcut(2^{[m]}\sqcup 2^{[n]})$:
\begin{align}
\label{eq:halfspacesOfDisjoinUnion} 
\switch_{2^{[m]}}(\sum_{\emptyset\neq F\subset [m]}x_{F \sqcup \emptyset} \leq 2^{m-1})\cup \switch_{2^{[n]}}(\sum_{\emptyset\neq F\subset [n]}x_{\emptyset \sqcup F}\leq 2^{n-1}).
 \end{align}
 
\end{ex}

\section{$\cH$-Representation for Turtle Complexes}
\label{section:turtleComplexes}

In this section, we give a complete $\cH$-representation for the generalized cut polytopes of a family of simplicial complexes which we call turtle complexes. They are named turtle complexes, because they are part of the ``shell" of the simplex.

\begin{defn}
\label{def:turtle}
A \emph{turtle complex} on ground set $[n]$ is a simplicial complex on $[n]$ and facets of cardinality $n-1$. Define  \(\mathbb{T}_n^k\) to be the turtle complex on $[n]$ with facets \(F_i = [n]\setminus\{i\}\), for $i=1,\dots,k$. 
\end{defn}
Up to ordering on the vertices on the ground  set, any turtle complex is a \(\mathbb{T}_n^k\). Examples of turtle complexes are the boundary of the $(n-1)$-simplex (\(\mathbb{T}_n^n\)) and the simplicial complex in Example \ref{Ex:RunningCut} (\(\mathbb{T}_4^2\)).

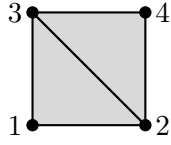
\begin{figure}
    \centering
    \begin{tikzpicture}[scale=1.5]         
    \draw[thick, fill = gray!30] (0,0) -- (1,0)-- (0,1)--(0,0);
    \draw[thick,fill=gray!30] (1,0)--(1,1)--(0,1)--(1,0);
    \draw[fill] (0,0) circle  [radius=.05];
    \node[left] at (0,0) {1};
    \draw[fill] (1,0) circle [radius=.05];
    \node[right] at (1,0) {2};
    \draw[fill] (0,1) circle  [radius=.05];
    \node[left] at (0,1) {3};
    \draw[fill] (1,1) circle  [radius=.05];
    \node[right] at (1,1) {4};
    \end{tikzpicture}
    \caption{The turtle complex used in Example \ref{Ex:RunningCut}.}
    \label{fig:turtle}
\end{figure}

\begin{defn}
Let $S \subset [n]$. We define the linear functional $\bc^S$ on $\R^{\overline{\mathbb{T}_n^k}}$ by
\[
c^S_F = 
\begin{cases}
0 & \text{ if } \#(S \cap F) \text{ is odd} \\
1 & \text{ if } \#(S \cap F) \text{ is even.}
\end{cases}
\]
\end{defn}

Let \(\mathcal{I} = \cap_{i=1}^k F_i\) be the intersection of all the facets, and throughout this section, assume that \(F_i = [n] \setminus \{i\}\).  In this section, we will prove the following theorem.

\begin{thm}
\label{thm:turtle}
Let \(\mathcal{J} \subset 2^{[n]}\) be the set of all subsets of $[n]$ of even cardinality.  The polytope \(\Gcut(\mathbb{T}_n^k)\) has $\cH$-description
\(
   \switch_\mathcal{J}(\bc^S \cdot \bfx \leq 2^{n-2}~|~S\subset [n], S\cap \mathcal{I} = \emptyset, \#S \text{ is odd}).
\)
\end{thm}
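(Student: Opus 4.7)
The plan is to use Gale duality together with the switching symmetry to classify every facet of $\Gcut(\mathbb{T}_n^k)$ and then to realize each one as a switching of some $\bc^S \cdot \bfx \leq 2^{n-2}$. First I would compute an explicit Gale transform of $D_{\mathbb{T}_n^k}$ using Walsh characters. Setting $\chi_F(T) = (-1)^{\#(F \cap T)}$, the identity
\[
\sum_T d^T_{F'}\,\chi_F(T) \;=\; \tfrac{1}{2}\Bigl(\sum_T \chi_F(T) - \sum_T \chi_{F \triangle F'}(T)\Bigr)
\]
combined with $\sum_T \chi_F(T) = 0$ for $F \neq \emptyset$ shows that $(\chi_F(T))_T$ lies in $\ker \bar D_{\mathbb{T}_n^k}$ whenever $F \supset \{1,\dots,k\}$. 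There are $2^{n-k}$ such $F$ and they are linearly independent; since this matches the kernel dimension, they form a basis, and the Gale vectors are $\bfb_T = ((-1)^{\#(F \cap T)})_{F \supset \{1,\dots,k\}}$.

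Next I would classify co-facets via \Cref{thm:Gale-Transform}. Writing $F = \{1,\dots,k\} \cup F'$ with $F' \subset \mathcal{I}$, each Gale vector factors as $\bfb_T = (-1)^{\#(T \cap \{1,\dots,k\})}\chi_{T \cap \mathcal{I}}$, so the set of distinct Gale vectors equals $\{\pm \chi_{T'} : T' \subset \mathcal{I}\}$. As these are mutually orthogonal, grouping a putative positive dependency $\sum c_i \bfb_i = 0$ by the underlying character forces each group sum $C_{\chi_{T'}} - C_{-\chi_{T'}}$ to vanish. Thus any minimal positively dependent subset must consist of exactly two antipodal vectors. Hence every co-facet has the form $\{\bfd^A, \bfd^B\}$ with $\bfb_A = -\bfb_B$, and a short parity argument shows this happens if and only if $A \triangle B$ is a subset of $\{1,\dots,k\}$ of odd cardinality.

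I would then verify that $\bc^S \cdot \bfx \leq 2^{n-2}$ is facet-defining for every odd-cardinality $S \subset \{1,\dots,k\}$. By direct expansion, $\bc^S \cdot \bfd^T$ counts the faces $F \in \bar{\mathbb{T}_n^k}$ with $\#(F \cap S)$ even and $\#(F \cap T)$ odd. The hypothesis on $S$ ensures that the subsets of $[n]$ excluded from $\bar{\mathbb{T}_n^k}$---namely $\emptyset$ and the supersets of $\{1,\dots,k\}$---contribute zero to either the restricted or the unrestricted count: the former has $\#(F \cap T) = 0$, and the latter has $F \cap S = S$ of odd cardinality. Counting fibers of the $\mathbb{F}_2$-linear map $\bfi^F \mapsto (\bfi^S \cdot \bfi^F,\, \bfi^T \cdot \bfi^F)$ then shows this count equals $2^{n-2}$ exactly when $\bfi^S, \bfi^T$ are linearly independent over $\mathbb{F}_2$, and is strictly smaller otherwise. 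So the inequality is valid and tight on precisely $\{\bfd^T : T \neq \emptyset, S\}$, giving the co-facet $\{\bfd^\emptyset, \bfd^S\}$; by the second step this is genuinely a co-facet.

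Finally, \Cref{Cor:SwitchingWorks} promotes each $\switch_I(\bc^S \cdot \bfx \leq 2^{n-2})$ to a facet-defining inequality whose co-facet is $\{\bfd^I, \bfd^{S \triangle I}\}$. For any odd $S$, exactly one of $\{A, A \triangle S\}$ has even cardinality, so as $I$ ranges over $\mathcal{J}$ and $S$ ranges over odd subsets of $\{1,\dots,k\}$, the resulting co-facets enumerate each pair from the classification exactly once. Hence the $2^{k-1}\cdot 2^{n-1}$ listed inequalities account for every facet, giving the claimed $\cH$-description. The main obstacle is the first two steps: identifying the Walsh-character basis of $\ker \bar D_{\mathbb{T}_n^k}$ and ruling out co-facets of size $\geq 3$ via the orthogonality of characters. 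Once these are in hand, matching facets to switchings is essentially bookkeeping.
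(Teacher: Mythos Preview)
Your proof is correct and follows essentially the same architecture as the paper's: compute a Gale transform, show that every co-facet is a pair $\{\bfd^A,\bfd^B\}$, verify that $\bc^S\cdot\bfx\le 2^{n-2}$ supports the facet with co-facet $\{\bfd^\emptyset,\bfd^S\}$ via an $\mathbb{F}_2$ fiber count, and then sweep out the remaining facets by switching over even $I$.

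The only genuine difference is the choice of kernel basis for $\bar D_{\mathbb{T}_n^k}$. The paper (\Cref{turtle-gale-transform}) takes the sparse vectors $\bal^T$ with $\alpha^T_A=(-1)^{\#A}$ when $A\cap\mathcal{I}=T$, which makes the Gale vectors literally signed standard basis vectors $\bfb_S=(-1)^{\#S}\bfe_{S\cap\mathcal{I}}$; the circuit classification in \Cref{turtle-cofacets} is then immediate. Your Walsh-character basis $(\chi_F)_{F\supset[k]}$ gives instead $\bfb_T=(-1)^{\#(T\cap[k])}\chi_{T\cap\mathcal{I}}$, and you recover the same circuit structure via orthogonality of the characters. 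The two bases are Fourier transforms of one another on $\mathbb{F}_2^{\mathcal{I}}$, so the tradeoff is that your kernel membership check is a one-line identity while the paper's needs a sign-reversing involution, but the paper's circuit step is trivial while yours needs the orthogonality argument. Steps 3 and 4 are the same as \Cref{turtle-halfs} and the paper's concluding switching argument.
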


Our outline is as follows. We compute the Gale transform of \(\Gcut(\mathbb{T}_n^k)\) and use Gale duality to determine what vertices lie on each co-facet of the polytope. For the facets which do not contain \(\bd^\emptyset\), we will show that their supporting hyperplanes are \(\bc^S \cdot \bfx = 2^{n-2}\). Finally, we will use the switching operation to get the facets that do contain \(\bd^\emptyset\).

\begin{lemma}\label{turtle-gale-transform}
Let \(\overline{D}_\Delta\) be the \((2^{n} - 2^{\#\mathcal{I}}) \times 2^{n}\) matrix whose columns are given by \(\begin{bmatrix} 1\\ \bfd^S \end{bmatrix}\). Then the kernel of \(\overline{D}_\Delta\) has a basis of vectors \(\bal^T \in \mathbb{R}^{2^{[n]}}\)  where
    \[
    \alpha_A^T = 
    \begin{cases}
        (-1)^{\#A}  & A \cap \mathcal{I} = T \\
        0           & \mbox{otherwise},
    \end{cases}
    \]
fore each \(A\subset [n]\) and \(T \subset \mathcal{I}\).
\end{lemma}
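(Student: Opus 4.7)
The plan is to verify that the $2^{n-k}$ proposed vectors $\bal^T$ genuinely lie in $\ker(\overline{D}_\Delta)$, argue they are linearly independent, and match this against the dimension of the kernel, which is forced by \Cref{thm:dim}. Since $\Gcut(\mathbb{T}_n^k)$ is full-dimensional with dimension equal to $\#\bar\Delta = 2^n - 2^{n-k} - 1$, the matrix $\overline{D}_\Delta$ must have full row rank $2^n - 2^{n-k}$, so $\dim \ker(\overline{D}_\Delta) = 2^{n-k} = 2^{\#\mathcal{I}}$. This is exactly the number of vectors $\bal^T$ produced, so it suffices to show they are linearly independent and all lie in the kernel.

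Linear independence is immediate: by construction, $\bal^T_A \neq 0$ only when $A \cap \mathcal{I} = T$, and since every $A \subset [n]$ determines a unique $T = A \cap \mathcal{I}$, the supports of the various $\bal^T$ are pairwise disjoint. Thus I can concentrate all effort on the kernel membership.

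To show $\overline{D}_\Delta \, \bal^T = \mathbf{0}$, I would check orthogonality of $\bal^T$ against each row of $\overline{D}_\Delta$. Every $A$ with $A \cap \mathcal{I} = T$ decomposes uniquely as $A = T \sqcup B$ with $B \subset \{1,\dots,k\} = [n] \setminus \mathcal{I}$, so $(-1)^{\#A} = (-1)^{\#T}(-1)^{\#B}$. For the top row of all ones I need $\sum_B (-1)^{\#B} = 0$, which holds for $k \geq 1$. For the row indexed by a nonempty face $F \in \bar{\mathbb{T}_n^k}$, I must show
\[
\sum_{\substack{B \subset \{1,\ldots,k\} \\ \#((T \sqcup B) \cap F) \text{ odd}}} (-1)^{\#B} = 0.
\]
Writing $F_1 := F \cap \{1,\ldots,k\}$ and $F_2 := F \cap \mathcal{I}$, the parity $\#((T\sqcup B) \cap F) = \#(T \cap F_2) + \#(B \cap F_1)$, so the odd/even constraint is a condition purely on $\#(B \cap F_1) \pmod 2$. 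Further splitting $B = B_1 \sqcup B_2$ with $B_1 \subset F_1$ and $B_2 \subset \{1,\ldots,k\} \setminus F_1$ factorizes the sum as
\[
\Bigl(\sum_{\substack{B_1 \subset F_1 \\ \#B_1 \equiv \varepsilon \pmod 2}}(-1)^{\#B_1}\Bigr)\Bigl(\sum_{B_2 \subset \{1,\ldots,k\}\setminus F_1}(-1)^{\#B_2}\Bigr)
\]
for the appropriate $\varepsilon \in \{0,1\}$.

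The main (and only) obstacle is ensuring that the second factor vanishes, i.e.\ that $\{1,\ldots,k\} \setminus F_1 \neq \emptyset$. But this is precisely the defining property of a face of $\mathbb{T}_n^k$: since $F$ must be contained in some facet $F_i = [n] \setminus \{i\}$ with $i \in [k]$, at least one element of $\{1,\ldots,k\}$ is missing from $F$, so the set $\{1,\ldots,k\} \setminus F_1$ is nonempty and the sum over its subsets vanishes. This completes the kernel check, and combined with the dimension count and linear independence, the $\bal^T$ form a basis of $\ker(\overline{D}_\Delta)$.
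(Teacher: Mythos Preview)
Your proof is correct and follows the same overall strategy as the paper: count the kernel dimension via full-dimensionality, observe linear independence, then verify each $\bal^T$ lies in the kernel by exploiting that every face $F$ of $\mathbb{T}_n^k$ misses some element of $[k]$. The only difference is in how that last cancellation is packaged: the paper picks a single $\ell\in[k]\setminus F$ and uses the sign-reversing involution $A\mapsto A\triangle\{\ell\}$, whereas you factor the sum over $B\subset[k]$ into a product and kill the factor $\sum_{B_2\subset[k]\setminus F_1}(-1)^{\#B_2}$. These are two presentations of the same identity (the involution is exactly what proves your factor vanishes), so the arguments are essentially equivalent; your factorization is arguably a bit more systematic, while the involution makes the combinatorial mechanism slightly more visible. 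Your linear-independence argument via disjoint supports is also a clean simplification of the paper's coordinate-by-coordinate check.
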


\begin{proof}
\(\Gcut(\mathbb{T}_n^k)\) is full-dimensional, so the kernel of \(\overline{D}_\Delta\) has dimension \(2^{\#\mathcal{I}}\). Since the vectors are indexed by subsets of \(\mathcal{I}\), there are \(2^{\#\mathcal{I}}\) of them. To show they are a basis it suffices to show that they are in the kernel and linearly independent. 

To see that they are linearly independent, suppose that we have a linear combination
    \[
    \sum_{T \subset \mathcal{I}} c_T \bal^T = 0. 
    \]
That for any two distinct subsets \(T, T'\) of  \(\mathcal{I}\), \(\alpha_{T'}^T\) is 0 since \(T' \cap \mathcal{I} = T'\) different from \(T\). Hence,
    \[
    \left(\sum_{T \subset \mathcal{I}} c_T \bal^T \right)_{T'} = (-1)^{\#T'}c_{T'} = 0.
    \]
Hence, all  \(c_{T'}\) are zero, for  \(T' \subset \mathcal{I}\), which concludes that \(\{\bal^T\}_{T\subset \mathcal{I}}\) is linearly independent. 

It remains to show that \(\bal^T \in \ker(\overline{D}_\Delta)\). For \(F \in \overline{\mathbb{T}_n^k}\) we have that
\begin{align*}
(V\bal^T)_F &= \sum_{A \subset [n]} d_F^A \alpha_A^T 
              = \sum_{\substack{A \cap \mathcal{I} = T \\ \#(F \cap A) ~is ~odd}} (-1)^{\#A}.
\end{align*}
We can see that this sum is zero by using a sign reversing involution on the set
    \[
    \mathcal{A}^T_F =  \{A \subset [n] ~|~ A \cap \mathcal{I} = T, ~ \#(F \cap A) \mbox{ is odd}\}.
    \]
Since \(F\in \mathbb{T}_n^k\), there exists some \(\ell\) in \([k]\) so that \(F \subset F_\ell\). In particular \(\ell\) is in neither \(F\) nor \(\mathcal{I}\). Define $\sigma_\ell$ by \(\sigma_\ell(A) = A \triangle \{\ell\}\) for each \(A \in \mathcal{A}^T_F\). Since \(\ell\) is not in \(\mathcal{I}\), we have
\(
    A \cap \mathcal{I} = \sigma_\ell(A) \cap \mathcal{I}.
\)
Since \(\ell\) is not in \(F\), we have
\(
    \#(A \cap F) = \#(\sigma_\ell(A) \cap F).
\)
We have shown that if \(A \in \mathcal{A}^T_F\), then \(\sigma_\ell(A) \in \mathcal{A}^T_F\), and \(\sigma_\ell\) is easily seen to be an involution on \(\mathcal{A}^T_F\). Finally, if we set the sign of \(A\in \mathcal{A}^T_F\) to be \((-1)^{\#A}\), then \(\sigma_\ell\) is also sign-reversing. It follows that \((\overline{D}_\Delta \bal^T)_F = 0\) for \(F \in \overline{\mathbb{T}_n^k}\). 

The last thing we need to show is that the first entry of \(\overline{D}_\Delta \bal^T\) is zero. The first entry is given by the dot product of the all 1's vector and \(\bal^T\) which is the sum
\(
    \sum_{A\cap \mathcal{I} = T} (-1)^{\#A}.
\)
This is zero by using \(\sigma_\ell\) for any \(\ell \notin \mathcal{I}\).
\end{proof}

The next lemma is a straightforward consequence of Lemma \ref{turtle-gale-transform}.

\begin{lemma}\label{turtle-cofacets}
We denote the standard basis of \(\mathbb{R}^\mathcal{I}\) by \(\{\bfe_T\}_{T \subset \mathcal{I}}\). The Gale transform of \(\mathbb{T}_n^k\) as in \Cref{turtle-gale-transform} is \(\mathcal{B} = \{\bb_S ~|~ S \subset [n]\}\), where 
\[
\bb_S = \sum_{\substack{T \subset \mathcal{I} \\ S\cap \mathcal{I} = T}} (-1)^{\#S}\bfe_T = (-1)^{\#S} \bfe_{S\cap \mathcal{I}}.
\]
Therefore, the circuits of \(\mathcal{B}\) are all of the form \(\{\bb_S, \bb_{S'}\}\) where \(S\cap \mathcal{I} = S' \cap \mathcal{I}\) and \(\#S\) and \(\#S'\) have opposite parity. The co-facets of \(\Gcut(\mathbb{T}^k_n)\) are \(\{\bd^S, \bd^{S'}\}\), where \(S \cap \mathcal{I} = S' \cap \mathcal{I}\), \(\#S\) is even, and \(\#S'\) is odd.
\end{lemma}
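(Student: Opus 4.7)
The plan is to read off the Gale transform directly from the basis $\{\bal^T\}_{T\subset\mathcal{I}}$ given in Lemma \ref{turtle-gale-transform}, and then identify the circuits by exploiting the fact that each Gale vector will turn out to be a signed standard basis vector.

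First I would recall that, by Definition \ref{defn:Gale-Transform}, the Gale transform is obtained by assembling the basis vectors $\bal^T$ (for $T\subset\mathcal{I}$) as the columns of a matrix $B$ and reading off its rows. Since the rows of $B$ are indexed by subsets $S\subset[n]$, the $S$-th Gale vector is $\bb_S = (\alpha^T_S)_{T\subset\mathcal{I}}\in\R^{\mathcal{I}}$. Plugging in the formula from Lemma \ref{turtle-gale-transform}, the entry $\alpha^T_S$ is zero unless $T = S\cap\mathcal{I}$, in which case it equals $(-1)^{\#S}$. Hence $\bb_S$ has exactly one nonzero coordinate, namely $(-1)^{\#S}$ in position $S\cap\mathcal{I}$, which gives the claimed formula $\bb_S = (-1)^{\#S}\bfe_{S\cap\mathcal{I}}$.

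Next I would identify the circuits of $\mathcal{B}$. Since every $\bb_S$ is a signed standard basis vector, two vectors $\bb_S$ and $\bb_{S'}$ are parallel if and only if $S\cap\mathcal{I} = S'\cap\mathcal{I}$, and a single nonzero scalar multiple of a standard basis vector cannot be dependent on its own. Thus the minimal dependent subsets of $\mathcal{B}$ are exactly the two-element subsets $\{\bb_S,\bb_{S'}\}$ with $S\cap\mathcal{I}=S'\cap\mathcal{I}$ and with opposite signs, i.e.\ with $\#S$ and $\#S'$ of opposite parity; these are precisely the pairs for which $\mathbf{0}\in\mathrm{relint}(\mathrm{conv}\{\bb_S,\bb_{S'}\})$.

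Finally, I would invoke Theorem \ref{thm:Gale-Transform}: since the complement of a facet of $\Gcut(\mathbb{T}_n^k)$ is a minimal set $\mathcal{J}^c$ whose Gale vectors contain $\mathbf{0}$ in their relative interior, the co-facets correspond bijectively to the circuits of $\mathcal{B}$. Translating back from Gale vectors to vertices, the co-facets of $\Gcut(\mathbb{T}_n^k)$ are exactly the pairs $\{\bd^S,\bd^{S'}\}$ with $S\cap\mathcal{I}=S'\cap\mathcal{I}$ and $\#S,\#S'$ of opposite parity, which we may normalize as in the statement by taking $\#S$ even and $\#S'$ odd. There is no real obstacle here beyond bookkeeping; the one spot to be a little careful is confirming that the pairs described really are \emph{minimal} dependent sets (so that they correspond to \emph{facets}, not lower-dimensional faces), which is immediate because each $\bb_S$ is supported on a single coordinate and hence no singleton $\{\bb_S\}$ is dependent.
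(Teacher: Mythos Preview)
Your proposal is correct and fills in exactly the details the paper omits; the paper gives no proof at all, simply remarking that the lemma ``is a straightforward consequence of Lemma~\ref{turtle-gale-transform},'' and your argument is the natural unpacking of that remark. One small wording issue: when you write that ``the minimal dependent subsets of $\mathcal{B}$ are exactly the two-element subsets \dots\ with opposite signs,'' that is not quite right as stated, since two Gale vectors with the \emph{same} sign and the same $S\cap\mathcal{I}$ are also linearly dependent; what you actually need (and what you correctly invoke in the next clause and in the appeal to Theorem~\ref{thm:Gale-Transform}) is the stronger condition $\mathbf{0}\in\mathrm{relint}(\mathrm{conv}\{\bb_S,\bb_{S'}\})$, which singles out the opposite-parity pairs. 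This is purely a terminological slip and does not affect the validity of your argument.
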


At this point, we have the tools to give the supporting hyperplanes for each facet which does not contain \(\bd^\emptyset\).

\begin{lemma}\label{turtle-halfs}
The facet with co-facet \(\{\bd^\emptyset,\bd^S\}\) where \(S \cap \mathcal{I} = \emptyset\) and \(\#S\) is odd has supporting hyperplane $\bc^S \cdot \bfx = 2^{n-2}.$
\end{lemma}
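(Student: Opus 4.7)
The plan is to check directly that the linear functional $\bc^S$ evaluates to $2^{n-2}$ on every vertex $\bd^T$ with $T \notin \{\emptyset, S\}$ and to $0$ on the two remaining vertices $\bd^\emptyset$ and $\bd^S$. Combined with Lemma \ref{turtle-cofacets}, which identifies $\{\bd^\emptyset, \bd^S\}$ as a co-facet of $\Gcut(\mathbb{T}_n^k)$ under the stated hypotheses on $S$, this will exhibit $\bc^S \cdot \bfx = 2^{n-2}$ as the supporting hyperplane of the corresponding facet.

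From the definitions of $\bc^S$ and $\bd^T$, the product can be rewritten as
\[
\bc^S \cdot \bd^T = \#\{F \in \overline{\mathbb{T}_n^k} : \#(S \cap F) \text{ is even and } \#(T \cap F) \text{ is odd}\}.
\]
For $T = \emptyset$ the odd-parity condition fails for every $F$, and for $T = S$ the two parity conditions contradict each other; thus both of the excluded vertices evaluate to $0$.

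For $T \notin \{\emptyset, S\}$, I would first count subsets $F \subseteq [n]$ satisfying the two parity conditions, temporarily ignoring the constraint $F \in \overline{\mathbb{T}_n^k}$. Partitioning $[n]$ as $(S \setminus T) \sqcup (T \setminus S) \sqcup (S \cap T) \sqcup ([n] \setminus (S \cup T))$ with block sizes $a, b, c, n-a-b-c$, the parity conditions decouple across the blocks. A short case analysis on whether $a$ or $b$ is zero, using that $\#S$ odd forces $c \geq 1$ when $a = 0$ and $T \neq \emptyset$ forces $c \geq 1$ when $b = 0$, shows that in every subcase the number of admissible $F \subseteq [n]$ equals $2^{n-2}$.

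Finally, I will show that restricting from $2^{[n]}$ to $\overline{\mathbb{T}_n^k}$ removes no admissible $F$. The only excluded subsets are $\emptyset$ and those containing $[k]$. The empty set has $\#(T \cap F) = 0$, violating the odd-parity requirement; and if $[k] \subseteq F$, then the hypothesis $S \cap \mathcal{I} = \emptyset$ gives $S \subseteq [k] \subseteq F$, so $\#(S \cap F) = \#S$ is odd, again violating the required parity. Hence $\bc^S \cdot \bd^T = 2^{n-2}$ for all $T \notin \{\emptyset, S\}$, completing the proof. The only delicate step is the parity case analysis, and the hypotheses $S \cap \mathcal{I} = \emptyset$ and $\#S$ odd are precisely what guarantees that the two ``bad'' subsets are automatically excluded by the parity conditions rather than requiring a separate correction from the restriction to $\overline{\mathbb{T}_n^k}$.
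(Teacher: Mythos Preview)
Your proof is correct and follows the same overall structure as the paper's: evaluate $\bc^S \cdot \bd^T$ as a count of faces with prescribed parities, verify the value is $0$ on $\bd^\emptyset$ and $\bd^S$, and show the restriction from $2^{[n]}$ to $\overline{\mathbb{T}_n^k}$ discards no admissible sets. The only real difference is in how you obtain the count $2^{n-2}$. The paper observes that the two parity conditions are the equations $\bfi^S \cdot \bfi^F = 0$ and $\bfi^T \cdot \bfi^F = 1$ over $\mathbb{F}_2$; since $S \neq \emptyset$ and $T \notin \{\emptyset, S\}$ make $\bfi^S, \bfi^T$ linearly independent, the solution set is a coset of a codimension-$2$ subspace and hence has cardinality $2^{n-2}$, with no case analysis. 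Your block decomposition and parity case analysis reach the same conclusion by a more hands-on route; it is slightly longer but entirely elementary and avoids invoking linear algebra over $\mathbb{F}_2$. The treatment of the excluded sets (those containing $[k]$, and $\emptyset$) is identical in both arguments.
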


\begin{proof}
First, note that \(\bc^S \cdot \bd^\emptyset = 0\) since \(\bd^\emptyset = {\bf 0}\), and \(\bc^S \cdot \bd^S = 0\) by definition of \(\bc^S\). Let \(T \subset [n]\). Note that 
    \begin{equation}\label{eqn:vectorspacecount}
    \bc^S \cdot \bd^T = \#\{A \in \overline{\mathbb{T}_n^k} ~|~ \#(A \cap S) \text{ is even and } \#(A \cap T) \text{ is odd}\}.
    \end{equation}
We will show that \(\bc^S \cdot \bd^T = 2^{n-2}\) by finding a codimension 2 affine subspace of \(\mathbb{F}_2^{n}\) which counts the set on the right-hand side of equation \ref{eqn:vectorspacecount}. Define
    \[
    H_{S,T} = \left\{ \bfi^A \in \mathbb{F}_2^{n} ~|~ M_{S,T} \cdot \bfi^A = \begin{pmatrix} 0 \\ 1 \end{pmatrix}\right\},
    \]
where \(M_{S,T}\) is the matrix whose first row is the indicator vector \(\bfi^S\) and the second row is the indicator vector \(\bfi^T\). Since \(S \neq \emptyset\) and \(\emptyset\neq T \neq S\), this matrix has rank 2. Hence, the cardinality of \(H_{S,T}\) is equal to \(2^{n-2}\).

The subspace \(H_{S,T}\) can be counted by subsets of \([n]\) whose intersection with \(S\) has even cardinality and whose intersection with \(T\) has odd cardinality. Thus, in order to show that \(\bc^S \cdot \bd^T = 2^{n-2}\), we only need to show that every vector in \(H_{S,T}\) corresponds to a subset which is an element of \(\overline{\mathbb{T}_n^k}\). Let \(A\) be a subset of \([n]\) which is not in \(\mathbb{T}_n^k\). We argue that the indicator vector for \(A\) is not in \(H_{S,T}\). Note \({\bf 0} \notin H_{S,T}\), so we may assume that \(A\) is non-empty. We know that \(S \cap [k] = S\) since \(S \cap \mathcal{I} = \emptyset\) and \(\mathcal{I} = [n]\setminus[k]\), and we know that \(\#S\) is odd. If \(A\) is not in \(\mathbb{T}_n^k\), then it must be of the form 
\(
A = [k] \sqcup B,
\)
where \(B \subset \mathcal{I}\). It follows that if \(A\) is not in \(\mathbb{T}_n^k\) then \(\#(A\cap S) = \#(S \cap [k])\) is odd. This means that \(\bfi^A\) is not in \(H_{S,T}\). Equivalently, if \(\bfi^A\) is in \(H_{S,T}\), then \(A\) is in \(\overline{\mathbb{T}_n^k}\). So 
    \[
    2^{n-2} = \#H_{S,T} = \#\{A \in \overline{\mathbb{T}_n^k} ~|~ \#(A \cap S) \text{ is even and } \#(A \cap T) \text{ is odd}\} = \bc^S \cdot \bd^T. \qedhere
    \]
\end{proof}

\begin{proof}[Proof of Theorem \ref{thm:turtle}]
Consider any facet \(F\). By Lemma \ref{turtle-cofacets}, the corresponding co-facet is \(\{\bd^S,\bd^{S'}\}\) where \(\#S\) is even, \(\#S'\) is odd, and \(S\cap \mathcal{I} = S'\cap\mathcal{I}\). Let \(F'\) be the facet with co-facet \(\{\bd^\emptyset, \bd^{S\triangle S'}\}\).  By Remark \ref{mod-switching}, the facet-defining inequality for \(F\) can be found by switching the facet-defining inequality of \(F'\) by \(S\). We have shown that any facet-defining inequality for \(\Gcut(\mathbb{T}_n^k)\) is be obtained by switching an inequality as in Lemma \ref{turtle-halfs} by an even subset. Thus, the facet-defining inequalities for \(\Gcut(\mathbb{T}_n^k)\) are exactly as claimed in the statement of the theorem. 
\end{proof}

\section{$\mathcal{H}$-Representation for Cones}

This section shows how one can derive $\cH$-representations for the generalized cut polytope of a cone over a simplicial complex $\D$ using the $\cH$-representation of the generalized cut polytope of $\D$.

\begin{defn}
Let $\D$ be a simplical complex on ground set $[n]$.  We define $\cC(\D)$ to be the simplicial complex on ground set $[n]\cup \{\ell\}$ with  facets
\[
\facet(\cC(\D))=\{F\cup \{\ell\}~|~ F\in \facet(\D)\}.
\]
\end{defn}

The  notation  $\bar{\D}\sqcup \{\ell\}$ will denote $\{F\cup \{\ell\}~|~ F\in \bar{\D}\}$. The notations $\bfx_{\bar{\D}}$ and $\bfx_{\bar{\D}\sqcup \{\ell\}}$ are used to denote the vector of variables indexed by the elements of $\bar{\D}$ and $\bar{\D}\sqcup \{ \ell \}$, respectively.  We will use $\mathbf{0}$ to denote the vector of all zero entries, and  $\mathbf{1}$ the vector of all one entries.  
\begin{thm}
\label{thm:coneHalfspace}
Let  $A\bfx_{\bar{\D}}\leq \bfc$ be the  half-space description for the generalized cut polytope of a simplicial complex  $\D$. The generalized cut polytope $\Gcut(\cC(\D))$ for the cone of $\D$ has half-space description 
\begin{align*}
    \begin{bmatrix} 
    A & A & 1\\
    A & -A & -1
    \end{bmatrix} \begin{bmatrix}
    \bfx_{\bar{\D}}\\
    \bfx_{\bar{\D}\sqcup \{\ell\}}\\
    x_{\{\ell\}}
    \end{bmatrix}\leq \begin{bmatrix}
    2\bfc\\
     \mathbf{0}
    \end{bmatrix}.
\end{align*}
\end{thm}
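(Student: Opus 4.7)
The plan is to identify the vertices of $\Gcut(\cC(\Delta))$ in terms of the vertices of $\Gcut(\Delta)$, to verify the stated inequalities on these vertices, and to establish the reverse containment by an explicit convex decomposition. Every $S\subseteq[n]\cup\{\ell\}$ can be written as either $T$ or $T\cup\{\ell\}$ for some $T\subseteq[n]$; since no $F\in\bar\D$ contains $\ell$ while every $F\cup\{\ell\}\in\bar\D\sqcup\{\ell\}$ does, toggling $\ell$ flips parity only in the second block of coordinates, and a direct substitution into the definition of $\bfd^S$ gives
\[
\bfd^T=(\bfd^T,\bfd^T,0), \qquad \bfd^{T\cup\{\ell\}}=(\bfd^T,\mathbf{1}-\bfd^T,1),
\]
in the coordinate order $(\bfx_{\bar\D},\bfx_{\bar\D\sqcup\{\ell\}},x_{\{\ell\}})$.

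For the forward containment, the first family of vertices reduces the top block to $2A\bfd^T\leq 2\bfc$ and the bottom block to $\mathbf{0}\leq\mathbf{0}$, both immediate from $A\bfd^T\leq\bfc$. The second family telescopes the top block to $A\mathbf{1}+\mathbf{1}\leq 2\bfc$ and the bottom block to $2A\bfd^T-A\mathbf{1}-\mathbf{1}\leq\mathbf{0}$; both require the normalization identity $A\mathbf{1}+\mathbf{1}=2\bfc$. Equivalently, every primitive integer facet inequality $\bfa\cdot\bfx\leq a_0$ of $\Gcut(\Delta)$ should satisfy $\bfa\cdot\mathbf{1}=2a_0-1$. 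I would establish this as a preliminary lemma: the quantity $\bfa\cdot\mathbf{1}-2a_0+1$ is invariant under the switching operation of Section~\ref{sec:switching} (a brief computation on the formulas defining $\bfa^{(I)}$ and $a_0-\bfa\cdot\bfd^I$), and it vanishes on the facets through the origin by inspection on the base cases from Sections 3--4; by Remark~\ref{mod-switching}, it then vanishes on every facet.

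For the reverse containment, take $(\bfu,\bfv,s)$ satisfying the full system. Adding each top row to the matching bottom row gives $2A\bfu\leq 2\bfc$, so $\bfu\in\Gcut(\Delta)$. Introduce
\[
\bfy:=\tfrac12(\bfu-\bfv+s\mathbf{1}), \qquad \bfz:=\tfrac12(\bfu+\bfv-s\mathbf{1}),
\]
so that $\bfy+\bfz=\bfu$. The bottom block together with $A\mathbf{1}+\mathbf{1}=2\bfc$ yields $A\bfy\leq s\bfc$, placing $\bfy$ in $s\cdot\Gcut(\Delta)$, and symmetrically the top block yields $A\bfz\leq(1-s)\bfc$, placing $\bfz$ in $(1-s)\cdot\Gcut(\Delta)$. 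The boundary cases $s=0,1$ are handled by the fact that $\Gcut(\Delta)$ is bounded, so its recession cone is trivial. Writing $\bfy=\sum_T\nu_T\bfd^T$ and $\bfz=\sum_T\pi_T\bfd^T$ with $\nu_T,\pi_T\geq 0$, $\sum\nu_T=s$, and $\sum\pi_T=1-s$, and then setting $\lambda_T:=\pi_T$ together with $\lambda_{T\cup\{\ell\}}:=\nu_T$ produces a convex combination whose componentwise evaluation is exactly $(\bfu,\bfv,s)$, as a short check confirms.

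The main obstacle is the normalization identity $A\mathbf{1}+\mathbf{1}=2\bfc$: it is needed both for the forward inequalities on the second family of vertices and for the scaled memberships in the reverse direction, and it is what justifies the literal use of $\pm\mathbf{1}$ as the coefficient column of $x_{\{\ell\}}$ in the displayed matrix. Once that lemma is in place, the rest of the argument reduces to the two substitutions and the explicit decomposition above.
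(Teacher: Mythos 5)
Your approach is genuinely different from the paper's: the paper lifts a basis for $\ker(\overline{D}_\Delta)$ to a basis for $\ker(\overline{D}_{\mathcal{C}(\Delta)})$, reads off the minimal co-faces via the Gale transform, and then asserts the displayed inequalities without further calculation, whereas you argue on the primal side. Your identification of the vertices, the verification of validity, and in particular the convex decomposition $\bfu = \bfy + \bfz$ with $\bfy \in s\cdot\Gcut(\Delta)$ and $\bfz \in (1-s)\cdot\Gcut(\Delta)$, followed by reassembly into a convex combination with weights $\lambda_T = \pi_T$ and $\lambda_{T\cup\{\ell\}} = \nu_T$, are correct and give a clean self-contained proof of both inclusions once the normalization identity is in hand. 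That is a nice argument and does not appear in the paper.

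The gap is precisely in the normalization lemma you flag, and it is more serious than an incomplete verification. Your argument (invariance of $\bfa\cdot\mathbf{1} - 2a_0 + 1$ under switching, then ``inspection on base cases from Sections 3--4'') cannot close for arbitrary $\Delta$ because those sections only treat special families. Worse, the identity is simply false in the natural integer scaling. Take $\Delta = D_{2,2}$ and the facet arising from the $8$-cycle in Section 6: its defining inequality is $\Bell\cdot\bfx = -x_{13} + x_{14} + x_{23} + x_{24} \leq 2$, for which $\Bell\cdot\mathbf{1} = 2$ but $2a_0 - 1 = 3$. Equivalently, rewriting the identity as $\sum_{S\in\mathcal{S}}(a_0 - \bfa\cdot\bfd^S) = 2^{n-1}$ over the co-facet $\mathcal{S}$ shows it forces every co-facet to have total ``gap'' $2^{n-1}$, which fails as soon as co-facets of different sizes appear, as they do for $D_{m,n}$. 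Consequently, the literal column $\pm 1$ in the matrix is not correct, and in fact the paper's own final step (``one can easily check \ldots'') hides the same dependence: evaluating its proposed inequality at $\bfd_{\mathcal{C}}^{T\cup\{\ell\}} = (\bfd^T, \mathbf{1}-\bfd^T, 1)$ gives $\bfa\cdot\mathbf{1} + 1$, which must equal $2c$ for tightness but generally does not.

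The theorem (and your proof) can be repaired in either of two ways. Since $\bfa\cdot\mathbf{1} < 2a_0$ for every facet-defining inequality (because $2^{n-1}\bfa\cdot\mathbf{1} = \sum_S \bfa\cdot\bfd^S < 2^n a_0$), one may normalize each row so that $\bfa\cdot\mathbf{1} + 1 = 2a_0$; under that convention your lemma is definitional and the rest of your argument goes through verbatim. Alternatively, keep the given $A, \bfc$ and replace the constant column $\pm\mathbf{1}$ by the row-dependent column $\pm(2\bfc - A\mathbf{1})$, whose entries are positive; a short recomputation shows your forward verification and the bounds $A\bfy \leq s\bfc$, $A\bfz \leq (1-s)\bfc$ still hold with this coefficient. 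You should state one of these fixes explicitly and drop the claim that the identity holds in general: as written, the normalization lemma is not a theorem but a choice of scaling, and conflating the two is where the proof currently breaks.
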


\begin{proof}

Let $D$ be the matrix with the vertices of $\Gcut(\D)$ as its column vectors. The vertices of $\Gcut(\cC(\D))$ are $\bfd_{\cC}^{S}=\begin{bmatrix}\bfd^S \\ \bfd^S \\ 0
\end{bmatrix}$ and
$\bfd_{\cC}^{S\cup \{\ell\}}=\begin{bmatrix}\bfd^S \\ \mathbf{1}-\bfd^S \\ 1
\end{bmatrix}$
in $\mathbb{R}^{\bar{\D}}\times \mathbb{R}^{\bar{\D}\sqcup \{\ell\}}\times \mathbb{R}^{\{\ell\}}$
where $\mathbf{1}$ is the vector of all ones. The homogenized matrix of $\Gcut(\cC(\D))$ has form
\begin{align*}
\Bar{D}_{\cC}=\begin{bmatrix}
\textbf{1}&\textbf{1}\\
D & D\\
D & \textbf{1}-D\\
\textbf{0}& \textbf{1}
\end{bmatrix}.
\end{align*}
A vector $\begin{bmatrix}\bft &\bft_{\{\ell\}}
\end{bmatrix}^{tr}\in \R^{2^{[n]}}\times \R^{2^{[n]}\sqcup p}$ in the kernel of  $\Bar{D}_{\cC}$ has the set of conditions: 
\begin{align*}
   & \sum\limits_{S\subset [n]}(t_{S}+t_{S\cup\{\ell\}})=0, & \qquad & 
   \sum\limits_{S\subset [n]}d_F^{S}(t_{S}+t_{S\cup\{\ell\}})=0 & \quad & \text{for each } F\in\bar{\D},\\
  &\sum\limits_{S\subset [n]}t_{S\cup\{\ell\}}= 0,
  & \qquad &
  \sum\limits_{S\subset [n]}d_F^{S}t^{S}+(1-d_F^{S})t_{S\cup\{\ell\}})=0 & \quad & \text{ for each } ~F\in\bar{\D},~~~~~
\end{align*}
which are equivalent to the following set of conditions:
\begin{align*}
  &\sum\limits_{S\subset [n]}t_{S}=0, & \qquad &
   \sum\limits_{S\subset [n]}d_F^{S}t_{S}=0&  \text{for each } F\in\bar{\D},\\
   &\sum\limits_{S\subset [n]}t_{S\cup\{\ell\}}=0,
   & \qquad & 
   \sum\limits_{S\subset [n]}d_F^{S}t_{S\cup\{\ell\}}=0 &  \text{ for each } F\in\bar{\D}.
\end{align*}
The first two equations are conditions on $\bft$ to be in the kernel of the homogenized matrix  $\bar{D}$. The next two equations are conditions for $\bft_{\{\ell\}}$ to be in the kernel of  $\bar{D}$. 
Let  $B_1,\dots, B_{i}\in \R^{2^{[n]}}$, where $i=2^n-\#\D$ be a basis for the kernel of $\bar{D}$, and let $\mathcal{B}=\{\bfb_S~| ~ S\subset [n]\}$ be the Gale transform for $\Gcut(\D)$ with respect to this basis, i.e. the rows of the matrix  $K=[B^1~\dots ~B^i]$. The columns  of the matrix
\[K_{\cC}=\begin{bmatrix}
B^1,\dots, B^{i} & \mathbf{0},\dots, \mathbf{0}\\
\mathbf{0},\dots, \mathbf{0} & B^1,\dots, B^{i}
\end{bmatrix}\] 
form a basis for the kernel of $\Bar{D}_{\cC}$. The Gale transform for $\Gcut(\cC(\D))$ with respect to this basis is the set of vectors
\[\mathcal{B}_{\cC}=\{\bfb_S^{\cC}=\begin{bmatrix}
\bfb_{S}\\\mathbf{0}\end{bmatrix},\bfb_{S\cup\{\ell\}}^{\cC}=\begin{bmatrix}\mathbf{0}\\\bfb_{S}\end{bmatrix}~|~ S\subset [n]\}.\]
Take an arbitrary minimal co-face for $\Gcut(\cC(\D))$. Such a set is a collection of vertices  $\{\bfd_{\cC}^S~|~S\in \mathcal{S}\subset 2^{[n]}\}\cup \{\bfd_{\cC}^{T\cup\{\ell\}}~|~T\in \mathcal{T}\subset 2^{[n]}\}$ of $\Gcut(\cC(\D))$. By the construction of the Gale dual for $\Gcut(\cC(\D))$,  zero is in relative interior of the convex hull of the respective vectors of the Gale dual if and only if  zero is in the relative interior of  $\{\bfb^{\cC}_S~|S\in \mathcal{S}\subset 2^{[n]}\}$ and of vertices $\{\bfb^{\cC}_{T}~|~T\in \mathcal{T}\subset 2^{[n]}\}$ simultanuesly. Hence, all the minimal co-faces for $\Gcut(\cC(\D))$ are of the form $\{\bfd_{\cC}^S~| S\in \mathcal{S}\subset 2^{[n]}\}$ and 
 $\{\bfd_{\cC}^{S\cup \{\ell\}}~| S\in \mathcal{S}\subset 2^{[n]}\}$, where $\{\bfd^S~| S\in \mathcal{S}\subset 2^{[n]}\}$ is a minimal co-face for $\Gcut(\D)$. 

Lastly, one can easily check that the  inequality $\bfa\bfx_{\bar{\D}}\leq \bfc$ associated to the minimal co-face $\{\bfd^{S}~|~ S\in \mathcal{S}\subset [n]\}$ of $\Gcut(\D)$ induces the inequalities $\bfa\bfx_{\bar{\D}}+\bfa\bfx_{\bar{\D}\sqcup \{\ell\}}+x_{\{\ell\}}\leq 2\bfc$ for the minimal co-face $\{\bfd_{\cC}^{S}~|~ S\in \mathcal{S}\subset [n]\}$ and $\bfa\bfx_{\bar{\D}}-\bfa\bfx_{\bar{\D}\sqcup \{\ell\}}-x_{\{\ell\}}\leq 0$ for the minimal co-face $\{\bfd_{\cC}^{S\cup\{\ell\}}~|~ S\in \mathcal{S}\subset [n]\}$ of $\Gcut(\cC^{\ell}(\D))$. Notice that the second inequality is a switching of the first by the set  $I=\{\ell\}$.
\end{proof}


\begin{ex}
\Cref{thm:coneHalfspace} and \Cref{eq:halfspacesOfDisjoinUnion} induce the following $\cH$-representation for the cone over the disjoint union of two simplices  $2^{[m]}\sqcup2^{[n]}$:
\begin{equation*} \label{eq:conetwosimplexes} 
\begin{cases}
\switch_{2^{[m]\cup \{\ell\}}}(\sum\limits_{\emptyset \neq S\subset [m]\}}(y_S + y_{S\cup \{\ell\}}) +y_{\{\ell\}} \leq 2^{m})\\
 \switch_{2^{[n]\cup \{\ell\}}}(\sum\limits_{\emptyset\neq S\subset [n]}(y_S + y_{S\cup \{\ell\}}) +y_{\{\ell\}} \leq 2^{n}).
\end{cases}
\end{equation*}
\end{ex}

From the perspective of marginal polytopes, $\Marg(\cC(\D))$ is a sub-direct product of $\Marg(\D)$ with itself as its vertices are of the form 
\(\begin{bmatrix}\bfu^S \\
\mathbf{0}
\end{bmatrix}\), and  \(\begin{bmatrix}\mathbf{0}\\
\bfu^S
\end{bmatrix},\) where $\bfu^S$ are vertices of $\Corr(\D)$. This way, one can deduce information about the facets of $\Marg(\cC(\D))$ \cite[Section~15.1.3]{handbook2017}, but a half-space description needs still to be found. 

One can extend inductively results about cones to $k-$cone over $\D$, denoted $\cC^k(\D)$, defined as the simplicial complex over $[n]\cup \{\ell_1,\dots,\ell_k\}$ with facets
\[
\facet(\cC^k(\D))=\{F\cup \{\ell_1,\dots,\ell_k\}~|~ F\in \facet(\D)\}.
\]
Such an example is the simplicial complex in \Cref{fig:turtle}; it is a $2$-cone with  $\D=\{\{1\},\{4\}\}$ and $\Bell=\{2,3\}$.
The observation  that $k$-cones are obtained by taking iteratively $k$ times the cone over the simplicial complex $\D$, i.e. 
\begin{align}
\label{eq:cone}
\cC^{k}(\D)=\underbrace{\cC(\cC(\dots \cC}_{k-times}(\D))\dots)
\end{align}
induces \Cref{cor:kCones} of \Cref{thm:coneHalfspace} on their $\cH$-descriptions.

\begin{cor}
\label{cor:kCones}
If  $A\bfx_{\bar{\D}}\leq \bfc$ is the $\cH$-description for $\Gcut(\D)$, the polytope $\Gcut(\cC^k(\D))$ has $\cH$-description 
\begin{align*} 
    \begin{bmatrix} 
    A_{\cC^{k-1}} & A_{\cC^{k-1}} & \mathbf{1}\\
    A_{\cC^{k-1}} & -A_{\cC^{k-1}} & -\mathbf{1}
    \end{bmatrix} \begin{bmatrix}
    \bfx_{\Bar{\cC^{k-1}(\D)}}\\
     \bfx_{\Bar{\cC^{k-1}(\D)}\sqcup{\{\ell_k\}}}\\
      \bfx_{\{\ell_k\}}
    \end{bmatrix}\leq \begin{bmatrix}
    2^{k}\bfc\\
     \mathbf{0}
    \end{bmatrix},
\end{align*}
where $A_{\cC^{k-1}}$ is the coefficient matrix in the $\cH$-description of $\Gcut(\cC^{k-1}(\D))$. 
\end{cor}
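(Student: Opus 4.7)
The plan is to prove this by straightforward induction on $k$, using the identity $\cC^k(\Delta) = \cC(\cC^{k-1}(\Delta))$ from \eqref{eq:cone} and repeatedly applying Theorem \ref{thm:coneHalfspace}. The idea is that each additional cone operation attaches a new apex vertex $\ell_j$ and, at the level of the $\cH$-description, wraps the existing coefficient matrix into the block structure $\left[\begin{smallmatrix} A & A & \mathbf{1}\\ A & -A & -\mathbf{1}\end{smallmatrix}\right]$ and doubles the right-hand side.

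For the base case $k=1$ the statement reads: if $A \bfx_{\bar\Delta} \leq \bfc$ is an $\cH$-description of $\Gcut(\Delta)$, then $\Gcut(\cC(\Delta))$ has the claimed block description with right-hand side $\begin{bmatrix} 2\bfc \\ \mathbf{0}\end{bmatrix}$. Here $A_{\cC^{0}} := A$ and this is exactly the content of Theorem \ref{thm:coneHalfspace}.

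For the inductive step I assume the corollary for $k-1$, so that $\Gcut(\cC^{k-1}(\Delta))$ admits the $\cH$-description
\[
A_{\cC^{k-1}} \, \bfx_{\overline{\cC^{k-1}(\Delta)}} \leq 2^{k-1}\bfc,
\]
where the left-hand side is understood as the block matrix given in the corollary for $k-1$. By \eqref{eq:cone} we have $\cC^{k}(\Delta) = \cC(\cC^{k-1}(\Delta))$, so I apply Theorem \ref{thm:coneHalfspace} to the simplicial complex $\cC^{k-1}(\Delta)$ with its $\cH$-description playing the role of ``$A \bfx_{\bar\Delta} \leq \bfc$'' in the theorem. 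The variables indexed by $\overline{\cC^{k-1}(\Delta)}$ become $\bfx_{\overline{\cC^{k-1}(\Delta)}}$, the new copy of these variables becomes $\bfx_{\overline{\cC^{k-1}(\Delta)} \sqcup \{\ell_k\}}$, and the apex variable is $x_{\{\ell_k\}}$. Theorem \ref{thm:coneHalfspace} then directly yields the block system
\[
\begin{bmatrix} A_{\cC^{k-1}} & A_{\cC^{k-1}} & \mathbf{1}\\ A_{\cC^{k-1}} & -A_{\cC^{k-1}} & -\mathbf{1}\end{bmatrix}
\begin{bmatrix} \bfx_{\overline{\cC^{k-1}(\Delta)}}\\ \bfx_{\overline{\cC^{k-1}(\Delta)}\sqcup\{\ell_k\}}\\ x_{\{\ell_k\}}\end{bmatrix} \leq \begin{bmatrix} 2\cdot 2^{k-1}\bfc \\ \mathbf{0}\end{bmatrix},
\]
and $2 \cdot 2^{k-1}\bfc = 2^{k}\bfc$ as required.

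The proof is essentially bookkeeping, so the only real care needed is in the indexing: making sure that ``$\overline{\cC^{k-1}(\Delta)}$'' in the inductive hypothesis plays the role of ``$\bar\Delta$'' in Theorem \ref{thm:coneHalfspace}, that the new apex $\ell_k$ is distinct from $\ell_1,\dots,\ell_{k-1}$ so that $\cC^{k-1}(\Delta)$ is an honest simplicial complex on the ground set $[n] \cup \{\ell_1,\dots,\ell_{k-1}\}$, and that the constant $2^{k-1}$ from the inductive hypothesis is doubled by the single application of Theorem \ref{thm:coneHalfspace}. I do not foresee a significant obstacle beyond this notational verification.
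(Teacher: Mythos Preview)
Your proposal is correct and follows exactly the approach the paper takes: the paper does not give a separate proof of the corollary but simply notes that the identity \eqref{eq:cone} together with Theorem~\ref{thm:coneHalfspace} ``induces'' it, which is precisely the induction on $k$ you have written out. Your careful tracking of the indexing and the doubling of the right-hand side is the natural elaboration of that one-line justification.
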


\begin{obs}
\label{obs:turtleCones}
Take $k<n$ to be two natural numbers. The simplicial complex $\cC^{n-k}(2^{[k]}\setminus [k])$ has facets 
\(
\{[k]\setminus \{i\} \cup \{k+1,\dots,n\}~|~ i=1,2\dots,k\}= \{[n]\setminus \{i\}~|~ i=1,2\dots,k\},
\)
which are exactly facets of $\mathbb{T}^k_n$. This observation allows one to use \Cref{cor:kCones} for the $\cH$-representation of turtle complexes that are not the boundary of a simplex. Carrying out the computations, one gets the same results as in \Cref{section:turtleComplexes}. Notice that the boundary of a simplex  $\mathbb{T}^n_n$ is not a cone over some smaller simplicial complex. 
\end{obs}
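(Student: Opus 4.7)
The plan is to verify the three assertions of the observation in sequence: the facet identification, the $\cH$-representation agreement, and the non-cone property of $\mathbb{T}_n^n$.

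First, I would identify the facets. The notation $2^{[k]}\setminus[k]$ denotes the simplicial complex of all proper subsets of $[k]$, whose maximal elements are precisely the $(k-1)$-element subsets $[k]\setminus\{i\}$ for $i\in[k]$; equivalently, this is the boundary of the $(k-1)$-simplex, i.e.\ $\mathbb{T}_k^k$. Iterating the cone construction $n-k$ times using equation \eqref{eq:cone}, each facet gets $\{k+1,\ldots,n\}$ appended, producing facets
\[
([k]\setminus\{i\})\cup\{k+1,\ldots,n\} \;=\; [n]\setminus\{i\}, \quad i=1,\ldots,k,
\]
which by Definition \ref{def:turtle} are precisely the facets of $\mathbb{T}_n^k$, establishing the claimed set equality.

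Second, for the $\cH$-representation agreement, I would start from \Cref{thm:turtle} applied to $\mathbb{T}_k^k = 2^{[k]}\setminus[k]$ (where the facet intersection $\mathcal{I}$ is empty), giving an $\cH$-description with right-hand side $2^{k-2}\mathbf{1}$ obtained by switching the inequalities indexed by odd subsets $S\subset[k]$. Applying \Cref{cor:kCones} $n-k$ times doubles the right-hand side at each step, yielding $2^{n-k}\cdot 2^{k-2}\mathbf{1} = 2^{n-2}\mathbf{1}$, matching the right-hand side in \Cref{thm:turtle} applied directly to $\mathbb{T}_n^k$ (where now $\mathcal{I} = \{k+1,\ldots,n\}$). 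The hardest step is verifying that the iterated block structure in \Cref{cor:kCones} produces precisely the collection of switched inequalities $\switch_{\mathcal{J}}(\bc^S \cdot \bfx \leq 2^{n-2})$. I would match them by showing that the alternating $\pm 1$ sign pattern in the iterated block matrix records the parity $(-1)^{\#(F\cap I)}$ of the switching operator for $I$ ranging over subsets of $\{k+1,\ldots,n\}$, while the horizontal doubling of coefficient vectors corresponds exactly to extending each $\bc^S$ from $\bar{\mathbb{T}_k^k}$ to $\bar{\mathbb{T}_n^k}$ according to the parity formula defining $\bc^S$ on the enlarged face set.

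Finally, to see that $\mathbb{T}_n^n$ is not a cone, I would argue by contradiction: if $\mathbb{T}_n^n = \cC(\D')$ for some simplicial complex $\D'$ with apex $\ell$, then by definition every facet of $\cC(\D')$ contains $\ell$. However, for every vertex $j \in [n]$ of $\mathbb{T}_n^n$, the facet $[n]\setminus\{j\}$ omits $j$, so no vertex lies in every facet. This contradicts the defining property of a cone. The principal obstacle throughout is the bookkeeping in the second paragraph: ensuring that the iterated matrix decomposition of \Cref{cor:kCones} produces the same list of linear inequalities, up to reordering, as the direct application of \Cref{thm:turtle}.
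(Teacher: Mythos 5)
Your proposal is correct and consistent with the paper, which presents this as an observation with no detailed proof beyond ``carrying out the computations.'' One small point worth noting in the second part: the iterated block structure from \Cref{cor:kCones} indexes switchings by pairs $(I_1, I_2)$ with $I_1 \subset [k]$ even (from the base $\cH$-description of $\mathbb{T}_k^k$) and $I_2 \subset \{k+1,\dots,n\}$ arbitrary, whereas \Cref{thm:turtle} indexes by even $I \subset [n]$; these are not literally the same index sets, but they produce the same collection of co-facets because switching $\bc^S$ by $I$ and by $S \triangle I$ yields the same facet, so each co-facet admits a representative with $I_1 = I \cap [k]$ even -- a fact your sketch would need to use when completing the bookkeeping you correctly identify as the hardest step.
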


\section{$\mathcal{H}$-Representation  for the Alexander Dual of the Disjoint Union of two Simplexes}
\label{sec:alexDual}

In \cite{unimodHierModels2017}, Bernstein and Sullivant characterized
the simplicial complexes whose binary hierarchical models
have unimodular design matrices.
Such a simplicial complex is called \emph{unimodular}.
They show that every unimodular simplicial complex can be obtained by cone,
Lawrence lift or ghost vertex operations to $2^{[m]}$,
$2^{[m]} \sqcup 2^{[n]}$, or the Alexander dual of $2^{[m]} \sqcup 2^{[n]}$.

\begin{defn}
For a simplicial complex $\Delta$ on $[n]$, the \emph{Alexander dual}
of $\Delta$, denoted $\Delta^*$ is the simplicial complex with faces $\{S \subset [n] \mid [n] \setminus S \not\in \Delta \}$.
\end{defn}

In the case where $\Delta = 2^{[m]} \sqcup 2^{[n]}$, we follow
the notation of \cite{unimodHierModels2017} and write 
$D_{m,n} := (2^{[n]} \sqcup 2^{[n]})^*.$
The simplicial complex $D_{m,n}$ has a nice explicit description;
indeed, $D_{m,n} = \{ S \sqcup T \mid S \subsetneq [m], T \subsetneq [n]\}$.
The characterization of unimodular simplicial complexes in \cite{unimodHierModels2017}
motivated the following investigation of $\Gcut(D_{m,n})$.

In \cite{unimodMatriod2017}, Bernstein and O'Neill gave a 
characterization of the matroid underlying the Gale dual of $\Gcut(D_{m,n}).$
The underlying oriented matroid of the Gale transform of $\Gcut(D_{m,n})$ is naturally isomorphic to the oriented graphic matroid of $K_{2^n, 2^m}$ with the following orientation. Let $A \subset [m]$ and $B \subset [n]$ be vertex labels of $K_{2^m, 2^n}$. Throughout this section, we write each element of $D_{m,n}$ as $A \sqcup B$ to indicate that $A \subset [m]$ and $B \subset [n]$. We direct the edge $A \rightarrow B$ if $A \sqcup B$ is even, and $A \leftarrow B$ if $A \sqcup B$ is odd. Adopting the notiation in  \cite{unimodMatriod2017}, we call this digraph $\dig$.

Each edge of $\dig$ corresponds to a subset of $[m] \sqcup [n]$ by taking the disjoint union of the subsets of $[m]$ and $[n]$ that label the nodes of the edge. When we wish to recall the orientation of the edge corresponding to $A \sqcup B$, we will denote it by $\overrightarrow{A \sqcup B}$ or $\overleftarrow{A \sqcup B}$. In this way, each edge of $\dig$ also corresponds to a vertex of $\Gcut(D_{m,n})$. Every directed cycle of $\dig$ corresponds to a \emph{co-facet} of $\Gcut(D_{m,n})$; that is, the set of vertices not on a given facet. So facets of $\Gcut(D_{m,n})$ are in bijection with directed cycles of $\dig$. To simplify notation, we will sometimes refer to facets and co-facets of $\Gcut(D_{m,n})$ by the sets that index their vertices.

\begin{lemma}
Every directed in cycle in $\dig$ has length divisible by $4$.
\end{lemma}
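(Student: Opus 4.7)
The plan is to exploit the bipartite structure of $\dig$ together with the parity rule defining the orientation. Since $\dig$ is bipartite with parts indexed by proper subsets of $[m]$ and proper subsets of $[n]$, any cycle must alternate between the two parts, so its length is automatically even, say $2k$. Write such a directed cycle as
\[
A_1 \longrightarrow B_1 \longrightarrow A_2 \longrightarrow B_2 \longrightarrow \cdots \longrightarrow A_k \longrightarrow B_k \longrightarrow A_1,
\]
where each $A_i \subsetneq [m]$ and each $B_i \subsetneq [n]$. The goal is to show $k$ is even, so that $2k$ is divisible by $4$.

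The key observation is that the parity rule imposes a very rigid constraint on consecutive cardinalities along the cycle. By the definition of the orientation, an edge $A_i \to B_i$ exists precisely when $\#A_i + \#B_i$ is even, so $\#A_i$ and $\#B_i$ have the same parity. An edge $B_i \to A_{i+1}$ exists precisely when $\#A_{i+1} + \#B_i$ is odd, so $\#A_{i+1}$ and $\#B_i$ have opposite parities. Composing these two steps, the parity of $\#A_{i+1}$ differs from the parity of $\#A_i$ for every $i$.

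Hence the sequence of parities of $\#A_1, \#A_2, \ldots, \#A_k, \#A_1$ strictly alternates. For this sequence to close up consistently (i.e., for $\#A_{k+1} = \#A_1$ to have the correct parity), $k$ must be even. Therefore the length of the cycle, $2k$, is divisible by $4$.

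I do not foresee a real obstacle here: once the bipartite and parity structure are unwound, the argument is a short parity chase. The only thing to be careful about is bookkeeping the two types of edges correctly and ensuring that the alternation forces $k$ to be even rather than merely positive; this is handled by comparing $\#A_{k+1}$ with $\#A_1$ after going once around.
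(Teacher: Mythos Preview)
Your proof is correct and follows essentially the same parity-chase argument as the paper: both track how the parities of the set cardinalities alternate as one traverses the cycle and conclude that the number of $A$-vertices (equivalently, half the cycle length) must be even. One small inaccuracy that does not affect the argument: the vertex labels of $\dig$ are all subsets of $[m]$ and $[n]$, not just the proper ones.
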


\begin{proof}
Observe that every directed cycle in $\dig$ must be of length at least 4, and must have at least one even and one odd set  both in $[m]$ and in $[n]$. Without loss of generality, let $A_1$ be an odd set in $[m]$ and let $B_1$ be an odd set in $[n]$ that are in a cycle $C$. Then,  the definition of $\dig$ induces that the directed cycle $C$ must be of the form: 
\[
C = \{ \overrightarrow{A_1 \sqcup B_1}, \overleftarrow {A_2 \sqcup B_1}, \overrightarrow{A_2 \sqcup  B_2}, \dots, \overrightarrow{A_{k} \sqcup B_{k}}, \overleftarrow{A_1 \sqcup B_k} \},
\]
where $\#A_i$ is odd if $i$ is odd and even if $i$ is even, and similarly for each $B_i$. Note that $k$ must be even since the edge $\overleftarrow{A_1 \sqcup B_k}$ is directed towards $A_1 \subset [m]$. By construction, $C$ has $2k$ edges. Since $k$ is even, the number of edges in $C$ is divisible by $4$.
\end{proof}

Every cycle $C$ in $\dig$ of length $4 k$ can be obtained by joining disjoint cycles $C_1$ of length $4$ and $C_2$ of length $4 k - 4$ in the following way. Given two directed cycles in $\dig$, one can write them as collection of directed edges as follows:
\begin{align*}
C_1 &= \{ \overrightarrow{A_1 \sqcup B_1}, \overleftarrow{A_2 \sqcup B_1}, \overrightarrow{A_2 \sqcup B_2}, \overleftarrow{A_1 \sqcup B_2}\}, \\
C_2 &= \{ \overrightarrow{A_1' \sqcup B_1'}, \overleftarrow{A_2' \sqcup B_1'}, \dots , \overrightarrow{A_{2k-2}' \sqcup B_{2k -2}'}, \overleftarrow{A_1' \sqcup B_{2k-2}'}\},
\end{align*}
where $A_1, B_1, A_1'$ and $B_1'$ all have the same parity. Then we define the \emph{cycle obtained by gluing $C_1$ and $C_2$ along $A_1 \sqcup B_1$ and $A_1'\sqcup B_1'$} by 
\[
C = (C_1 \cup C_2 \cup \{\overrightarrow{A_1 \sqcup B_1'}, \overrightarrow{A_1' \sqcup B_1} \}) \setminus \{\overrightarrow{A_1 \sqcup B_1}, \overrightarrow{A_1' \sqcup B_1'} \}.
\]
An example of this operation is depicted for $m = n =2$ in Figure \ref{Fig:CycleGluing}. In this example, we exchange the edges $\overrightarrow{\{1\} \sqcup \{3\}}$ and $\overrightarrow{\{2\} \sqcup \{4 \}}$ for the edges $\overrightarrow{\{1\} \sqcup \{4\}}$ and $\overrightarrow{\{2\} \sqcup \{3 \}}$ to obtain a cycle of length 8.

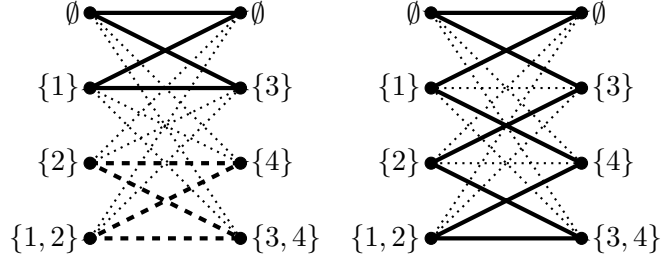
\begin{figure}
    \centering
    \begin{tikzpicture}
    \draw[fill] (0,0) circle [radius = .08];
    \node[left] at (0,3) {$\emptyset$};
    \draw[fill] (0,1) circle [radius = .08];
    \node[left] at (0,2) {$\{ 1 \}$};
    \draw[fill] (0,2) circle [radius = .08];
    \node[left] at (0,1) {$\{ 2 \}$};
    \draw[fill] (0,3) circle [radius = .08];
    \node[left] at (0,0) {$\{ 1,2 \}$};
    \draw[fill] (2,0) circle [radius = .08];
    \node[right] at (2,0) {$\{3,4\}$};
    \draw[fill] (2,1) circle [radius = .08];
    \node[right] at (2,1) {$\{4\}$};
    \draw[fill] (2,2) circle [radius = .08];
    \node[right] at (2,2) {$\{3\}$};
    \draw[fill] (2,3) circle [radius = .08];
    \node[right] at (2,3) {$\emptyset$};
    \draw[ultra thick] (0,3) -- (2,3) -- (0,2) -- (2,2) -- (0,3);
    \draw[ultra thick, dashed] (0,1) --(2,1) -- (0,0) -- (2,0) -- (0,1);
    \draw[thick, dotted] (0,0) -- (2,3) -- (0,1) -- (2,2) -- (0,0);
    \draw[thick, dotted] (0,3) -- (2,1) -- (0,2) -- (2,0) -- (0,3);
    \end{tikzpicture}
    \begin{tikzpicture}
    \draw[fill] (0,0) circle [radius = .08];
    \node[left] at (0,3) {$\emptyset$};
    \draw[fill] (0,1) circle [radius = .08];
    \node[left] at (0,2) {$\{ 1 \}$};
    \draw[fill] (0,2) circle [radius = .08];
    \node[left] at (0,1) {$\{ 2 \}$};
    \draw[fill] (0,3) circle [radius = .08];
    \node[left] at (0,0) {$\{ 1,2 \}$};
    \draw[fill] (2,0) circle [radius = .08];
    \node[right] at (2,0) {$\{3,4\}$};
    \draw[fill] (2,1) circle [radius = .08];
    \node[right] at (2,1) {$\{4\}$};
    \draw[fill] (2,2) circle [radius = .08];
    \node[right] at (2,2) {$\{3\}$};
    \draw[fill] (2,3) circle [radius = .08];
    \node[right] at (2,3) {$\emptyset$};
    \draw[ultra thick] (0,0) -- (2,0) -- (0,1) -- (2,2) -- (0,3) -- (2,3) -- (0,2) -- (2,1) -- (0,0);
    \draw[thick, dotted] (0,0) -- (2,2) -- (0,2) -- (2,0) -- (0,3) -- (2,1) -- (0,1) -- (2,3) -- (0,0);
    \end{tikzpicture}
    \caption{Two cycles in $G_2^{2,2}$, depicted in bold solid and dashed lines, and the cycle obtained by gluing them along $\{1\} \sqcup \{3\}$ and $\{2\} \sqcup \{4 \}$.}
    \label{Fig:CycleGluing}
\end{figure}

In Lemma \ref{Lem:Length4Cycles}, we first characterize the inequalities defining facets that correspond to cycles of length 4.
Then in the rest of this section, we develop machinery for computing the inequality for a cycle obtained by gluing two smaller cycles. We ultimately give an inductive facet description of $\Gcut(\Dmn)$ in Theorem \ref{Thm:ADualFacets}.

In the case of directed cycles of length $4$ in $\dig$ 
that contain the edge corresponding to $\emptyset$, 
we can directly characterize the corresponding facet
of $\Gcut(D_{m,n})$. 
Let $\O \subset [m]$ and $\U \subset [n]$ be sets of odd cardinality. 
Then $\{\emptyset, \O, \U, \O \sqcup \U\}$ 
is a co-facet of $\Gcut(D_{m,n})$. 
Let $\Ev(\O,\U)$ be the set of all sets of the form
$A \sqcup B \neq \emptyset$ such that $\#(A \cap \O)$ and
$\#(B \cap \U)$ are both even.
Note that $\Ev(\O,\U) \subset \Dmn$ since $[m] \cap \O$
and $[n] \cap \U$ both have odd cardinality.

\begin{lemma}\label{Lem:Length4Cycles}
The inequality
\begin{equation}\label{Eqn:Length4Cycles}
\sum_{S \in \Ev(\O,\U)} x_S \leq 2^{m+n-3}
\end{equation}
is the facet-defining inequality for the co-face $\{\emptyset, \O, \U, \O \sqcup \U\}$.
\end{lemma}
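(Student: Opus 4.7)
The plan is to evaluate the linear functional $\sum_{S \in \Ev(\O,\U)} x_S$ on every vertex $\bd^T$ of $\Gcut(D_{m,n})$ and verify that it attains the value $2^{m+n-3}$ precisely on the vertices outside $\{\emptyset, \O, \U, \O \sqcup \U\}$. First I would write any subset of $[m]\sqcup[n]$ uniquely as $A \sqcup B$ with $A \subset [m]$, $B \subset [n]$. For a vertex $\bd^T$ with $T = A_T \sqcup B_T$, the coordinate $d^T_{A\sqcup B}$ equals $1$ iff $\#(A\cap A_T) + \#(B \cap B_T)$ is odd. Combining with the defining parity conditions of $\Ev(\O, \U)$, the sum factors as
\begin{equation*}
\sum_{S \in \Ev(\O,\U)} d^T_S \;=\; e_1(A_T)\, o_2(B_T) \;+\; o_1(A_T)\, e_2(B_T),
\end{equation*}
where $e_1(A_T)$ (resp.\ $o_1(A_T)$) counts $A \subset [m]$ with $\#(A \cap \O)$ even and $\#(A \cap A_T)$ even (resp.\ odd), and $e_2, o_2$ are defined analogously with $\U$ and $B_T$. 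The pair $(A,B) = (\emptyset,\emptyset)$, which is excluded from $\Ev(\O,\U)$, contributes to even-parity counts only, so the factorization is unaffected.

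Second, I would compute $e_1, o_1$ by a short $\mathbb{F}_2$-rank argument: the map $A \mapsto (\bfi^{\O}\cdot \bfi^A,\; \bfi^{A_T}\cdot \bfi^A) \in \mathbb{F}_2^2$ is linear, and since $\#\O$ is odd one has $\bfi^{\O} \neq 0$; hence the map is surjective unless $\bfi^{A_T} \in \{0, \bfi^{\O}\}$, i.e., unless $A_T \in \{\emptyset, \O\}$. In the surjective case every fiber has size $2^{m-2}$, so $e_1 = o_1 = 2^{m-2}$; in the degenerate case the odd-parity fiber is empty, giving $o_1 = 0$ and $e_1 = 2^{m-1}$. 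An identical dichotomy governs $e_2, o_2$ in terms of $B_T \in \{\emptyset, \U\}$. A direct case check then yields $e_1 o_2 + o_1 e_2 = 2^{m+n-3}$ whenever at least one of $A_T \notin \{\emptyset,\O\}$ or $B_T \notin \{\emptyset,\U\}$ holds, and equals $0$ in the remaining case $T \in \{\emptyset, \O, \U, \O \sqcup \U\}$. This simultaneously establishes validity of the inequality and identifies its tight vertex set.

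To upgrade ``tight on $2^{m+n}-4$ vertices'' to ``facet-defining,'' I would invoke the Bernstein--O'Neill description of the Gale dual of $\Gcut(D_{m,n})$: the four sets $\emptyset,\O,\U,\O\sqcup\U$ correspond to the four edges of the induced subgraph of $\dig$ on the vertex bipartition $\{\emptyset,\O\}\sqcup\{\emptyset,\U\}$, and since $\emptyset$ and $\O\sqcup\U$ are even while $\O$ and $\U$ are odd, their orientations form a directed $4$-cycle. Hence $\{\emptyset,\O,\U,\O\sqcup\U\}$ is a co-facet of $\Gcut(D_{m,n})$, its complement spans a facet, and the unique (up to scaling) supporting hyperplane of that facet must be the valid inequality tight on precisely those vertices, namely \eqref{Eqn:Length4Cycles}. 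The main technical step is the parity bookkeeping in the rank-$2$ analysis; after that, the facet-definingness is a formal consequence of the cited matroid identification.
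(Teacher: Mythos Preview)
Your proof is correct and follows the same overall strategy as the paper: evaluate the linear functional $\sum_{S\in\Ev(\O,\U)}x_S$ on every vertex $\bd^T$ using $\mathbb{F}_2$-linear algebra, then invoke the Bernstein--O'Neill Gale dual description to conclude facet-definingness. The only difference is organizational. The paper works directly in $\mathbb{F}_2^{m+n}$ with the single rank-$3$ matrix $M$ having rows $\bfi^{\O},\bfi^{\U},\bfi^T$; since $T\notin\{\emptyset,\O,\U,\O\sqcup\U\}$ exactly means $\bfi^T$ is independent of $\bfi^{\O},\bfi^{\U}$, the fiber $\{A:M\bfi^A=(0,0,1)^{tr}\}$ is a coset of size $2^{m+n-3}$, and one is done in one line. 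You instead exploit the product structure $[m]\sqcup[n]$ to factor the count as $e_1o_2+o_1e_2$ and run two separate rank-$2$ arguments, which then requires a short case analysis to reassemble. Your route is slightly longer but makes the $[m]$/$[n]$ separation explicit; the paper's route is more compact because it handles all four ``degenerate/nondegenerate'' cases at once via the single rank condition.
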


\begin{proof}
By construction, we have that for each $T \in \{\emptyset, \O, \U, \O \sqcup \U\}$, $\sum_{S \in \Ev(\O,\U)} d_S^T = 0.$
So each of these $\bd^T$ do not satisfy Equation (\ref{Eqn:Length4Cycles}) with equality.

We must show that for $T \not\in \{\emptyset, \O, \U, \O \sqcup \U\}$,
$\bd^T$ satisfies equality in Equation (\ref{Eqn:Length4Cycles}).
Consider the vector space $\mathbb{F}_2^{m+n}$ over the finite field of order 2.
For every $S \subset [m] \sqcup [n]$, let $\bfi^S$ be the indicator vector of $S$.
For a fixed $T \not\in \{\emptyset, \O, \U, \O \sqcup \U\}$, let $M$ be the
matrix with rows $(\bfi^{\O})^{tr}, (\bfi^{\U})^{tr}$ and $(\bfi^T)^{tr}$.
Note that $M$ has rank 3 since $T \not\in \{\emptyset, \O, \U, \O \sqcup \U\}$.
Then a set $A \subset [m] \sqcup [n]$ simultaneously is in $\Ev(\O,\U)$ and has $d^T_A = 1$ if and only if $M \bfi^A = \begin{bmatrix}
0&0&1
\end{bmatrix}^{tr}$.
The set of all such indicator vectors $\bfi^A$ is a coset of the kernel of $M$ and therefore has cardinality $2^{m+n-3}$, as needed.
\end{proof}

\begin{rmk}\label{Rmk:SwitchedLength4}
The facet-defining inequalities for co-faces corresponding to cycles of length 4 in $\dig$
can be obtained from those in Lemma \ref{Lem:Length4Cycles} via the switching operation. We note that every cycle of length 4 in $\dig$ is either of the form described in Lemma \ref{Lem:Length4Cycles} or does not contain the empty set. Therefore, performing a nontrivial switching operation on the inequality in \Cref{Lem:Length4Cycles} always yields a homogeneous inequality, since the empty set is not in the resulting co-face.
\end{rmk}

\begin{cor}\label{Cor:CofaceValues}
Let $\bq \cdot \bx \leq c$ be the facet-defining inequality for a co-facet given by a cycle of length 4 in $\dig$. Then for every $S \subset [m] \sqcup [n]$, either $\bq \cdot \bd^S = c$ or $\bq \cdot \bd^S = c - 2^{m+n-3}$. 
\end{cor}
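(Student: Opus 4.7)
The plan is to reduce to \Cref{Lem:Length4Cycles} via the switching operation. That lemma handles a very specific cycle of length~$4$ in $\dig$, namely the one with co-facet $\{\emptyset, \O, \U, \O \sqcup \U\}$ for odd-cardinality $\O \subset [m]$ and $\U \subset [n]$, and its proof actually shows the stronger statement that the facet-defining linear functional $\bq$ attains exactly two values on the vertices of $\Gcut(\Dmn)$: namely $0$ on the four co-facet vertices and $2^{m+n-3}$ on every other vertex. This is exactly the dichotomy claimed in the corollary, for the specific inequality of the lemma (where $c = 2^{m+n-3}$).

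The first substantive step is to argue that every length-$4$ cycle co-facet of $\Gcut(\Dmn)$ is a symmetric-difference translate of $\{\emptyset, \O, \U, \O \sqcup \U\}$. Writing the four vertices as $\{A_1 \sqcup B_1, A_2 \sqcup B_1, A_2 \sqcup B_2, A_1 \sqcup B_2\}$, the alternating-direction constraint in $\dig$ forces the four parity equations discussed after \Cref{Lem:Length4Cycles} to hold, and these force $\O := A_1 \triangle A_2$ and $\U := B_1 \triangle B_2$ to both have odd cardinality. Setting $I = A_1 \sqcup B_1$, the co-facet is exactly $\{I \triangle T \mid T \in \{\emptyset, \O, \U, \O \sqcup \U\}\}$. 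By \Cref{Cor:SwitchingWorks}, the facet-defining inequality $\bq \cdot \bx \leq c$ for our cycle is therefore the switching by $I$ of the inequality $\bq_0 \cdot \bx \leq 2^{m+n-3}$ from \Cref{Lem:Length4Cycles}, so $\bq = \bq_0^{(I)}$ and $c = 2^{m+n-3} - \bq_0 \cdot \bd^I$.

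The computational heart of the argument is the identity
\[
\bq_0^{(I)} \cdot \bd^T \;=\; \bq_0 \cdot \bd^{T \triangle I} \;-\; \bq_0 \cdot \bd^I
\]
valid for every $T \subset [m] \sqcup [n]$. To derive this, I would use that $d^{T \triangle I}_F = d^I_F + (1-2d^I_F)\, d^T_F$ (which is a direct consequence of \Cref{Prop:SymmDiff}, checking the two cases $d^I_F \in \{0,1\}$) together with $(-1)^{\#(I \cap F)} = 1 - 2d^I_F$ and $(d^I_F)^2 = d^I_F$. Expanding $\bq_0^{(I)} \cdot \bd^T = \sum_F (1-2d^I_F)\, (q_0)_F\, d^T_F$ and then substituting $\bd^T = (\bd^{T \triangle I} - \bd^I)/(1-2\bd^I)$ componentwise (or more cleanly, plugging $S = T \triangle I$ into the parallel expansion for $\bq_0^{(I)} \cdot \bd^{S \triangle I}$) yields the identity after the $(d^I_F)^2 = d^I_F$ simplification.

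Combining: by \Cref{Lem:Length4Cycles}, $\bq_0 \cdot \bd^{T \triangle I} \in \{0,\, 2^{m+n-3}\}$ as $T$ ranges over $2^{[m+n]}$, so the identity gives
\[
\bq \cdot \bd^T \;\in\; \{-\bq_0 \cdot \bd^I,\; 2^{m+n-3} - \bq_0 \cdot \bd^I\} \;=\; \{c - 2^{m+n-3},\; c\},
\]
as required. The only place any care is needed is the parity argument classifying length-$4$ cycles in Step~1 and keeping track of the sign in the switching identity in Step~3; neither involves a substantial obstacle, so I expect the proof to be short once these pieces are in place.
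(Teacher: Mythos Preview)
Your proposal is correct and follows essentially the same approach as the paper: reduce to \Cref{Lem:Length4Cycles} via switching, using that every length-$4$ co-facet is a symmetric-difference translate of $\{\emptyset,\O,\U,\O\sqcup\U\}$. Your general identity $\bq_0^{(I)}\cdot\bd^T=\bq_0\cdot\bd^{T\triangle I}-\bq_0\cdot\bd^I$ (which is essentially \cite[Lemma~26.3.3]{cutsDeza1997}) handles both cases uniformly, whereas the paper splits into the cases $c=2^{m+n-3}$ and $c=0$ and in the latter runs an ad~hoc parity argument on $\Ev(\O,\U)$; the underlying content is the same.
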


\begin{proof}
If $c = 2^{m+n-3}$, then the construction of $\Ev(\O, \U)$ shows that $\bq \cdot \bd^S = 0$ whenever $S = \emptyset, \O, \U, \O \sqcup \U$.

Otherwise, $c = 0$ and $\bq$ was obtained by switching another inequality $\bfp \cdot \bx \leq 2^{m+n-3}$ with respect to a set $I$. Let $\bfp \cdot \bx \leq 2^{m+n-3}$ be the facet-defining inequality for the co-facet given by cycle $\{\emptyset, \O, \U, \O \sqcup \U\}$. Then $\bq \cdot \bx \leq 0$ is the facet-defining inequality for the co-facet given by the cycle $\{I, \O \triangle I, \U \triangle I (\O \sqcup \U) \triangle I\}$ where $I \neq \emptyset, \O, \U, \O \sqcup \U$ \cite[Lemma~26.3.3]{cutsDeza1997}. 

Since this inequality is homogeneous, we must have $\bq \cdot \bd^I = -\bfp \cdot \bd^I = -2^{m+n-3}$. 
Let $T \in \{ \O, \U, \O \sqcup \U\}$. For each $S \in \Ev(\O, \U)$, the parity of $I \cap S$ is the same as that of $(T \triangle I) \cap S$. Indeed, for each $S \in \Ev(\O, \U)$, we have that $\#(S \cap \O), \#(S \cap \U)$ and $\#(S \cap (\O \sqcup U))$ are even. The cardinality of $S \cap (T \triangle I)$ is $\#(S \cap T) + \#(S \cap I) - 2\#(S \cap T \cap I)$, which has the same cardinality as $\#(S \cap I)$. So $\bq \cdot \bd^{T \triangle I} = -2^{m+n-3}$ as well.
\end{proof}

\begin{thm}\label{Thm:ADualFacets}
Let $C$ be a directed cycle in $\dig$. Let $C$ be obtained from $C_1$ and $C_2$ by gluing along  edges $A_1 \rightarrow B_1$ in $C_1$ and $A_2 \rightarrow B_2$ in $C_2$  where $A_1, B_1, A_2$ and $B_2$ all have the same parity. Let $F_1$ be the facet of $\Gcut(D_{m,n})$ corresponding to $C_1$ with facet-defining inequality $\bq_1 \cdot \bx \leq c_1$. Let $F_2$ be the facet of $\Gcut(D_{m,n})$ corresponding to $C_2$ with facet-defining inequality $\bq_2 \cdot \bx \leq 0$. Then the facet-defining inequality for the co-face given by $C$ is
\begin{equation}\label{Eqn:CycleFacet}
\bq_1 \cdot \bx + \bq_2 \cdot \bx + \mathbf{a}\cdot \bx \leq c_1,
\end{equation}
where $\mathbf{a}$ is the linear functional given by
\begin{equation}\label{Eqn:ADualFunctional}
\sum_{\substack{S: S \cap A_1 \text{ odd} \\
S \cap B_1 \text{ even} \\
S \cap A_2 \text{ even} \\
S \cap B_2 \text{ odd}}} \bx_S +
\sum_{\substack{S: S \cap A_1 \text{ even} \\
S \cap B_1 \text{ odd} \\
S \cap A_2 \text{ odd} \\
S \cap B_2 \text{ even}}} \bx_S -
\sum_{\substack{S: S \cap A_1 \text{ odd} \\
S \cap B_1 \text{ odd} \\
S \cap A_2 \text{ even} \\
S \cap B_2 \text{ even}}} \bx_S -
\sum_{\substack{S: S \cap A_1 \text{ even} \\
S \cap B_1 \text{ even} \\
S \cap A_2 \text{ odd} \\
S \cap B_2 \text{ odd}}} \bx_S.
\end{equation}
We always have $c_1 = 2^{m+n-3}$ or $c_1 = 0$.
Furthermore, for all $S \subset [m] \sqcup [n]$, the linear functional in Equation (\ref{Eqn:CycleFacet}) evaluated at $\bd^S$ is either $c_1$ or $c_1 - 2^{m+n-3}$.
\end{thm}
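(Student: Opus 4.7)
The plan is to verify the formula by computing the left-hand side of (\ref{Eqn:CycleFacet}) at each vertex $\bd^S$ and confirming that it equals $c_1 - 2^{m+n-3}$ precisely when $S$ is an edge of $C$ and $c_1$ otherwise. Since $\Gcut(\Dmn)$ is full-dimensional, this identifies the proposed inequality as the facet-defining one for the facet associated with $C$, and the ``furthermore'' claim is then immediate. By Corollary \ref{Cor:CofaceValues} applied to $F_1$ and $F_2$, $\bq_1 \cdot \bd^S$ drops from $c_1$ to $c_1 - 2^{m+n-3}$ exactly when $S$ is an edge of $C_1$, and $\bq_2 \cdot \bd^S$ drops from $0$ to $-2^{m+n-3}$ exactly when $S$ is an edge of $C_2$. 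Since $C_1$ and $C_2$ are edge-disjoint, this determines $(\bq_1 + \bq_2) \cdot \bd^S$ in every case, and the correction $\mathbf{a} \cdot \bd^S$ has to shift the value by $0$ or $\pm 2^{m+n-3}$ in a controlled way.

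The main computation is that of $\mathbf{a} \cdot \bd^S$, to be carried out by the finite-field counting strategy of Lemma \ref{Lem:Length4Cycles}. Each of the four sums in (\ref{Eqn:ADualFunctional}) counts subsets $T$ whose indicator $\bfi^T \in \mathbb{F}_2^{m+n}$ satisfies four prescribed parity conditions against $\bfi^{A_1}, \bfi^{B_1}, \bfi^{A_2}, \bfi^{B_2}$; dotting with $\bd^S$ imposes one further linear condition $\bfi^T \cdot \bfi^S = 1$. When $\bfi^S$ is not in the $\mathbb{F}_2$-span of $\{\bfi^{A_1}, \bfi^{B_1}, \bfi^{A_2}, \bfi^{B_2}\}$, the five conditions are independent, so each sum contributes $2^{m+n-5}$ and the signed combination yields $\mathbf{a} \cdot \bd^S = 0$. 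When $\bfi^S$ lies in the span, writing $\bfi^S = \epsilon_1 \bfi^{A_1} + \epsilon_2 \bfi^{B_1} + \epsilon_3 \bfi^{A_2} + \epsilon_4 \bfi^{B_2}$ in $\mathbb{F}_2$ forces the $S$-condition to be an $\mathbb{F}_2$-combination of the existing four, and a check over the sixteen $(\epsilon_i)$-patterns produces $\mathbf{a} \cdot \bd^S = +2^{m+n-3}$ for $S \in \{A_1 \sqcup B_1, A_2 \sqcup B_2\}$, $-2^{m+n-3}$ for $S \in \{A_1 \sqcup B_2, A_2 \sqcup B_1\}$, and $0$ in the remaining twelve cases.

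Assembling the pieces, the removed edges $A_1 \sqcup B_1 \in C_1$ and $A_2 \sqcup B_2 \in C_2$ have $(\bq_1 + \bq_2) \cdot \bd^S = c_1 - 2^{m+n-3}$, and the $+2^{m+n-3}$ correction lifts the total back to $c_1$, consistent with those sets not being edges of $C$. The new edges $A_1 \sqcup B_2$ and $A_2 \sqcup B_1$ lie in neither $C_1$ nor $C_2$, so $(\bq_1 + \bq_2) \cdot \bd^S = c_1$, and the $-2^{m+n-3}$ correction drops them to $c_1 - 2^{m+n-3}$, matching their being edges of $C$. For every other $S$ the correction vanishes and the $(\bq_1 + \bq_2)$-value already matches. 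The invariance $c_1 \in \{0, 2^{m+n-3}\}$ then follows by induction on the length of $C$, with Lemma \ref{Lem:Length4Cycles} and Remark \ref{Rmk:SwitchedLength4} as the base case.

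The main obstacle is the sixteen-case analysis when $\bfi^S$ lies in the span of $\{\bfi^{A_1}, \bfi^{B_1}, \bfi^{A_2}, \bfi^{B_2}\}$, along with confirming that sets $T$ outside $\overline{\Dmn}$ (namely $T = \emptyset$ and $T$ containing $[m]$ or $[n]$) contribute zero to each of the four sums in (\ref{Eqn:ADualFunctional}). The common-parity hypothesis on $A_1, B_1, A_2, B_2$ is exactly what forces these forbidden $T$ to violate the parity conditions in each sum, playing the same role as the parity of $S$ in the proof of Lemma \ref{Lem:Length4Cycles}.
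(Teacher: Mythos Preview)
Your proposal is correct and follows essentially the same route as the paper. The paper isolates the computation of $\mathbf{a}\cdot\bd^S$ into two auxiliary results (Lemma~\ref{Lem:aEqualsZero} for the vanishing cases and Proposition~\ref{Prop:aNonzero} for the four nonzero cases), organizing the rank-$4$ case by the shape of $T = T_1 \sqcup T_2$ rather than by your $(\epsilon_1,\dots,\epsilon_4)$ coordinates, but the underlying finite-field counting argument is the same. One small point of presentation: your appeal to Corollary~\ref{Cor:CofaceValues} for $F_1$ and $F_2$ is literally valid only when $C_1$ and $C_2$ have length $4$; for longer cycles the two-value property of $\bq_i\cdot\bd^S$ is exactly the inductive hypothesis (the ``furthermore'' clause), and you should cite it as such rather than as the corollary. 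Your explicit remark that the common-parity hypothesis forces the coordinates outside $\overline{D_{m,n}}$ to drop out of each sum is a point the paper handles only implicitly, so you have in fact been slightly more careful there.
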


Note that Theorem \ref{Thm:ADualFacets} describes every facet of $\Gcut(\Dmn)$. Indeed, every directed cycle in $\dig$ can be obtained by gluing two smaller cycles. At least one of these cycles must not contain the empty set, and so the inequality defining its corresponding facet is homogeneous. Therefore we may always take one of the inequalities to be $\bq_2 \cdot \bx \leq 0$. In order to prove Theorem \ref{Thm:ADualFacets}, we first examine the linear functional $\bfa$ and its values on the vertices of $\Gcut(\Dmn)$.

\begin{lemma}\label{Lem:aEqualsZero}
Let $T \neq A_i \sqcup B_j$ for all $i, j = 1,2$.  Then $\ba \cdot \bd^T = 0$.
\end{lemma}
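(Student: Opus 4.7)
The plan is to exploit the bipartite decomposition $S = X \sqcup Y$ with $X \subset [m]$ and $Y \subset [n]$, factor the coefficient of $\bx_S$ in $\ba$ as a product $f(X)g(Y)$, and reduce $\ba \cdot \bd^T$ to a difference of products of elementary $\mathbb{F}_2$-character sums. The key observation is that since $A_1, A_2 \subset [m]$ and $B_1, B_2 \subset [n]$, the four parities $|S \cap A_i|$ and $|S \cap B_j|$ depend only on $X$ and $Y$ separately. Either by checking the four prescribed patterns or by performing a direct Fourier expansion on $\mathbb{F}_2^4$, one finds
\[
c_S \;=\; \tfrac{1}{4}\bigl((-1)^{|X \cap A_2|} - (-1)^{|X \cap A_1|}\bigr)\bigl((-1)^{|Y \cap B_1|} - (-1)^{|Y \cap B_2|}\bigr) \;=\; f(X)\,g(Y).
\]
Since $A_1, B_1, A_2, B_2$ share the same parity, $f([m]) = g([n]) = 0$ (and $c_{\emptyset} = 0$ trivially), so the defining sum extends from $\overline{D_{m,n}}$ to all of $2^{[m+n]}$ without changing value.

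Writing $T = T_A \sqcup T_B$ with $T_A \subset [m]$ and $T_B \subset [n]$, I will expand
\[
d_S^T \;=\; \tfrac{1}{2} - \tfrac{1}{2}(-1)^{|X \cap T_A|}(-1)^{|Y \cap T_B|},
\]
plug in the factorization of $c_S$, and distribute, yielding
\[
\ba \cdot \bd^T = \tfrac{1}{2}\Bigl(\sum_X f(X)\Bigr)\Bigl(\sum_Y g(Y)\Bigr) - \tfrac{1}{2}\Bigl(\sum_X f(X)(-1)^{|X \cap T_A|}\Bigr)\Bigl(\sum_Y g(Y)(-1)^{|Y \cap T_B|}\Bigr).
\]
Each of the four inner sums is evaluated using the elementary character identity $\sum_{X \subset [m]}(-1)^{|X \cap U|} = 2^m\,[U = \emptyset]$, producing $\sum_X f(X) = 2^{m-1}([A_2 = \emptyset] - [A_1 = \emptyset])$, $\sum_X f(X)(-1)^{|X \cap T_A|} = 2^{m-1}([A_2 = T_A] - [A_1 = T_A])$, and analogously for $Y$.

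Substituting and pulling out the factor $2^{m+n-3}$ gives the compact expression
\[
\ba \cdot \bd^T = 2^{m+n-3}\Bigl(\bigl([A_2 = \emptyset] - [A_1 = \emptyset]\bigr)\bigl([B_1 = \emptyset] - [B_2 = \emptyset]\bigr) - \bigl([A_2 = T_A] - [A_1 = T_A]\bigr)\bigl([B_1 = T_B] - [B_2 = T_B]\bigr)\Bigr).
\]
Using that $A_1, B_1, A_2, B_2$ are nonempty (so the first product is $0$) together with $A_1 \neq A_2$ and $B_1 \neq B_2$ (so each factor in the second product lies in $\{-1, 0, 1\}$ and is nonzero precisely when $T_A \in \{A_1, A_2\}$, respectively $T_B \in \{B_1, B_2\}$), the second product is nonzero exactly when $T = A_i \sqcup B_j$ for some $i, j$. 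This is excluded by hypothesis, so $\ba \cdot \bd^T = 0$.

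The main obstacle is recognizing the factorization $c_S = f(X)g(Y)$ of the piecewise-defined coefficient function; once this is in hand, the rest is mechanical manipulation of character sums. A secondary subtlety is verifying that the boundary terms $S = \emptyset$, $X = [m]$, and $Y = [n]$ contribute zero, so that the sum can be taken over all of $2^{[m+n]}$; this is where the same-parity hypothesis on the gluing endpoints is used.
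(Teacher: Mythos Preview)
Your proof is correct and takes a genuinely different route from the paper's. The paper forms the $5 \times (m+n)$ matrix over $\mathbb{F}_2$ with rows $\bfi^{A_1},\bfi^{A_2},\bfi^{B_1},\bfi^{B_2},\bfi^{T}$, splits into the cases $\mathrm{rk}=5$ and $\mathrm{rk}=4$, and in the rank-$4$ case runs through all the possibilities $T_1 \in \{\emptyset,A_1,A_2,A_1\triangle A_2\}$, $T_2 \in \{\emptyset,B_1,B_2,B_1\triangle B_2\}$ with several subcases, each handled by a coset-counting argument. Your approach instead recognizes the factorization
\[
c_S=\tfrac14\bigl((-1)^{|X\cap A_2|}-(-1)^{|X\cap A_1|}\bigr)\bigl((-1)^{|Y\cap B_1|}-(-1)^{|Y\cap B_2|}\bigr),
\]
extends the sum to all of $2^{[m+n]}$ using the same-parity hypothesis, and collapses everything to a single closed-form identity via the character sum $\sum_{X}(-1)^{|X\cap U|}=2^m[U=\emptyset]$. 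This eliminates the case analysis entirely and, as a bonus, your final formula immediately yields Proposition~\ref{Prop:aNonzero} as well by plugging in $T=A_i\sqcup B_j$. The paper's argument is perhaps more transparent for a reader who has not spotted the factorization, but yours is shorter and unifies the two results.

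One caveat: you invoke ``$A_1,B_1,A_2,B_2$ are nonempty'' to kill the first product. This hypothesis is not stated in Lemma~\ref{Lem:aEqualsZero} itself (it appears only in Proposition~\ref{Prop:aNonzero}), and without it the lemma is actually false: with $m=n=2$, $A_1=B_1=\emptyset$, $A_2=\{1,2\}$, $B_2=\{3,4\}$, one computes $\ba\cdot\bd^{\{1\}}=-2\neq 0$. The paper's own proof has the same tacit dependence, since its claim that $M$ has rank $\geq 4$ fails when any of $A_1,A_2,B_1,B_2$ is empty. So this is not a defect of your argument relative to the paper's, but you should flag the assumption explicitly.
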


\begin{proof}
Fix $T \neq A_i \sqcup B_j$ for each $i = 1,2$ and $j = 1,2$. 
We will evaluate $\ba \cdot \bd^T$. 
Let $T = T_1 \sqcup T_2$. 
Consider the matrix $M \in \mathbb{F}_2^{5 \times (m+n)}$ with rows $(\bfi^{A_1})^{tr}, (\bfi^{A_2})^{tr}, (\bfi^{B_1})^{tr}, (\bfi^{B_2})^{tr}$ and $(\bfi^T)^{tr}$. 
Since $A_1 \neq A_2$, $B_1 \neq B_2$, $\supp(A_i) \cap \supp(B_j) = \emptyset$ 
for each $i, j = 1,2$, the matrix $M$ has either rank 4 or 5. 

If $\mathrm{rk}(M) = 5$, then the nonzero terms of each sum in Equation (\ref{Eqn:ADualFunctional}) evaluated at $\bd^S$ correspond to a coset of the kernel of $M$. For example, the nonzero terms of the first sum correspond to those sets $S$ such that $M \bfi^S = \begin{bmatrix}
1 & 0 & 0 & 1 & 1\\
\end{bmatrix}^{tr}.$
Since each of these cosets has the same size, $\ba \cdot \bd^T = 0$.

If $\mathrm{rk}(M) = 4$, then we have $T_1 \in \{ \emptyset, A_1, A_2, A_1 \triangle A_2 \}$ and $T_2 \in \{ \emptyset, B_1, B_2, B_1 \triangle B_2 \}$. There are several cases.

\emph{Case 1:} First, consider the case where $T_1 = \emptyset$.
\begin{enumerate}[label = (\alph*)]
    \item If $T_2 = \emptyset$, then $\ba \cdot \bd^T = 0$, as needed.
    \item If, without loss of generality, $T_2 = B_1$, then $\bfi^{T} \cdot \bfi^{B_1} = 0$. So every term in the first and third sums of Equation (\ref{Eqn:ADualFunctional}) evaluated at $\bd^T$ is 0. The nonzero terms of the second and fourth sums each correspond to cosets of the kernel of $M$. Since these cosets have the same cardinality, $\ba \cdot \bd^T = 0$.
    \item If $T_2 = B_1 \triangle B_2$, then $\bfi^T \cdot \bfi^S = \bfi^{B_1} \cdot \bfi^S + \bfi^{B_2} \cdot \bfi^S = 1$ for all $S$ in the support of $\bfa$. So the nonzero terms of each sum correspond to cosets of the kernel of $M$, and $\ba \cdot \bd^T = 0$.
\end{enumerate}

\emph{Case 2:} Without loss of generality, let $T_1 = A_1$. Then by assumption, we must have $T_2 = \emptyset$ or $T_2 = B_1 \triangle B_2$.
\begin{enumerate}[label = (\alph*)]
    \item If $T_2 = \emptyset$, then this is the same as case 1(b).
    \item If $T_2 = B_1 \triangle B_2$, then $\bfi^T \cdot \bfi^S = \bfi^{A_1} \cdot \bfi^S + \bfi^{B_1} \cdot \bfi^S + \bfi^{B_2} \cdot \bfi^S$ for all $S$. But for all $S$ in the support of $\bfa$, we have $\bfi^{B_1} \cdot \bfi^S + \bfi^{B_2} \cdot \bfi^S = 1.$ So when $\bfi^{A_1} \cdot \bfi^S = 1$ for $S$ in the support of $\bfa$, we cannot have $\bfi^T \cdot \bfi^S = 1$. So every term in the first and third sums of Equation (\ref{Eqn:ADualFunctional}) evaluated at $\bd^T$ are 0. The nonzero terms of the second and fourth sums each correspond to cosets of the kernel of $M$. Since these cosets have the same cardinality, $\ba \cdot \bd^T = 0$.
\end{enumerate}

\emph{Case 3:} Finally, let $T_1 = A_1 \triangle A_2$.
\begin{enumerate}[label = (\alph*)]
    \item If $T_2 = \emptyset$, this is the same as case 1(c).
    \item If $T_2 = B_1$ or $B_2$, this is the same as case 2(b).
    \item If $T_2 = B_1 \triangle B_2$, then $\bfi^T \cdot \bfi^S = \bfi^{A_1} \cdot \bfi^S + \bfi^{A_2} \cdot \bfi^S + \bfi^{B_1} \cdot \bfi^S + \bfi^{B_2} \cdot \bfi^S$. But for all $S$ in the support of $\bfa$, $\bfi^{A_1} \cdot \bfi^S + \bfi^{A_2} = 1$ and $\bfi^{B_1} \cdot \bfi^S + \bfi^{B_2} \cdot \bfi^S = 1$. So $\bfi^T \cdot \bfi^S = 0$ for all $S$ in the support of $\bfa$, and each term in Equation (\ref{Eqn:ADualFunctional}) is 0.
\end{enumerate}
\end{proof}

\begin{prop}\label{Prop:aNonzero}
Let $A_1, A_2 \subset [m]$ and $B_1, B_2 \subset [n]$ such that $A_1 \neq A_2$, $B_1 \neq B_2$ and none of them are empty. Then $\ba \cdot\bd^{A_1 \sqcup B_1} = \ba \cdot \bd^{A_2 \sqcup B_2} = 2^{m+n-3}$ and $\ba \cdot \bd^{A_1 \sqcup B_2} = \ba \cdot \bd^{A_2 \sqcup B_1} = -2^{m+n-3}$.
\end{prop}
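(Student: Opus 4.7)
The plan is to compute $\ba \cdot \bd^T$ directly for each $T = A_i \sqcup B_j$ using linear algebra over $\mathbb{F}_2$. The central observation is that $d^T_S$ equals $\bfi^T\cdot\bfi^S \pmod 2$, and since $\bfi^{A_i\sqcup B_j} = \bfi^{A_i} + \bfi^{B_j}$ in $\mathbb{F}_2^{m+n}$, one has
\[
d^{A_i\sqcup B_j}_S \equiv \bfi^{A_i}\cdot\bfi^S + \bfi^{B_j}\cdot\bfi^S \pmod 2.
\]
Thus $d^T_S$ depends only on two of the four parity bits $p(S) := (\bfi^{A_1}\cdot\bfi^S,\bfi^{B_1}\cdot\bfi^S,\bfi^{A_2}\cdot\bfi^S,\bfi^{B_2}\cdot\bfi^S)\in\mathbb{F}_2^4$ that index the four sums in the definition of $\ba$ (Equation~(\ref{Eqn:ADualFunctional})). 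Those four sums are in fact the preimages under $p$ of the patterns $(1,0,0,1),(0,1,1,0),(1,1,0,0),(0,0,1,1)$, with signs $+,+,-,-$ respectively.

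Because $A_1\neq A_2$ are nonempty subsets of $[m]$, $B_1\neq B_2$ are nonempty subsets of $[n]$, and the supports of $A_i$ and $B_j$ are disjoint, the four vectors $\bfi^{A_1},\bfi^{A_2},\bfi^{B_1},\bfi^{B_2}$ are linearly independent in $\mathbb{F}_2^{m+n}$. Consequently $p$ is surjective and every fiber has cardinality $2^{m+n-4}$. As in the proof of \Cref{Lem:aEqualsZero}, this count is taken over all $S \subset [m+n]$; under the common parity of $A_1,A_2,B_1,B_2$ inherited from the gluing context of \Cref{Thm:ADualFacets}, the subsets excluded from $\bar D_{m,n}$ (namely $\emptyset$ and subsets containing $[m]$ or $[n]$) fail at least one parity constraint in each of the four classes and contribute nothing.

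Finally, matching the sign pattern with $d^T_S$ completes the computation. For $T = A_1\sqcup B_1$, the parity $\bfi^{A_1}\cdot\bfi^S+\bfi^{B_1}\cdot\bfi^S$ is odd exactly on the two positively-signed patterns $(1,0,0,1)$ and $(0,1,1,0)$, giving $\ba\cdot\bd^{A_1\sqcup B_1} = 2\cdot 2^{m+n-4} = 2^{m+n-3}$. The case $T = A_2\sqcup B_2$ is identical, since $\bfi^{A_2}\cdot\bfi^S+\bfi^{B_2}\cdot\bfi^S$ is odd on the same two positively-signed patterns. For $T = A_1\sqcup B_2$ and $T = A_2\sqcup B_1$, the relevant parity sum is instead odd only on the two negatively-signed patterns $(1,1,0,0)$ and $(0,0,1,1)$, yielding $-2^{m+n-3}$ in each case. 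The argument is mechanical bookkeeping that parallels \Cref{Lem:aEqualsZero}; no serious obstacle arises beyond aligning the $+,+,-,-$ sign pattern against the two parity bits that actually determine $d^T_S$.
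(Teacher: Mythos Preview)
Your proposal is correct and follows essentially the same idea as the paper's proof: both observe that for $T = A_i \sqcup B_j$ the value $d^T_S$ is constant (equal to $1$) on the two positively-signed classes and constant (equal to $0$) on the two negatively-signed classes, or vice versa, so that $\ba\cdot\bd^T = \pm 2^{m+n-3}$. The paper's proof is extremely terse---it simply asserts the count of $2^{m+n-3}$ positive and negative terms and rephrases the four parity classes in terms of intersections with $A_i\sqcup B_j$---whereas you spell out the $\mathbb{F}_2$-linear algebra behind the fiber count and, notably, are more careful than the paper about why the summation over $\bar D_{m,n}$ (rather than all of $2^{[m+n]}$) does not change the count, invoking the common-parity hypothesis from the gluing context.
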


\begin{proof}
There are $2^{m+n-3}$ positive terms and $2^{m+n-3}$ negative terms of $\bfa$. The positive terms correspond exactly to those sets $S \subset [m] \sqcup [n]$ such that $\#(S \cap (A_1 \sqcup B_1))$ and $\#(S \cap (A_2 \sqcup B_2))$ are both odd and $\#(S \cap (A_1 \sqcup B_2))$ and $\#(S \cap (A_2 \sqcup B_1))$ are both even. Similarly, the negative terms of $\bfa$ correspond exactly to those sets $S \subset [m] \sqcup [n]$ such that $\#(S \cap (A_1 \sqcup B_2))$ and $\#(S \cap (A_2 \sqcup B_1))$ are both odd and $\#(S \cap (A_1 \sqcup B_1))$ and $\#(S \cap (A_2 \sqcup B_2))$ are both even.
\end{proof}

We can now prove the main result of this section.

\begin{proof}[Proof of Theorem \ref{Thm:ADualFacets}]
Let $4k$ be the length of the cycle $C$. If $k = 1$, then by Lemma \ref{Lem:Length4Cycles} and the remark that follows it, the facet-defining inequality for the facet given by $C$ is either of the form $\bq \cdot \bx \leq 2^{m+n-3}$ or $\bq \cdot \bx \leq 0$. By Corollary \ref{Cor:CofaceValues}, for each $S \subset [m] \sqcup [n]$, we either have $\bq \cdot \bd^S = c_1$ or $\bq \cdot \bd^S = c_1 - 2^{m+n-3}$.
Now we proceed by induction on $k$.

Let $C$ be obtained by gluing $C_1$ and $C_2$ along $A_1 \sqcup B_1$ and $A_2 \sqcup B_2$. Let $\bq_1 \cdot \bx \leq c_1$ be the facet-defining inequality for co-face $C_1$ and let $\bq_2 \cdot \bx \leq 0$ be the facet-defining inequality for co-face $C_2$. Let $\ba$ be defined as in the statement of the theorem. Let $\Bell = \bq_1 + \bq_2 + \ba$.

First, let $S$ be a set that does not label an edge of $C$. We wish to show that $\Bell \cdot \bd^S = c_1$.
Either  $S = A_1 \sqcup B_1$, $S = A_2 \sqcup B_2$, or $S$ not an edge of $C_1$ or $C_2$.

If $S$ is neither an edge of $C_1$ nor $C_2$, then $\bd^S$ is on both of the facets corresponding to these co-facets. Therefore we have
$\bq_1 \cdot \bd^S = c_1$ and $\bq_2 \cdot \bd_2 = 0$. By Lemma \ref{Lem:aEqualsZero}, $\ba \cdot \bd^S = 0$, which concludes that  $\Bell \cdot \bd^S = c_1$.

If $S = A_1 \sqcup B_1$, then $S$ is an edge of $C_1$. By induction, this implies that $\bq_1 \cdot \bd^S = c_1 - 2^{m+n-3}$. Since $S$ is not an edge of $C_2$, $\bq_2 \cdot \bd^S = 0$. By Proposition \ref{Prop:aNonzero}, $\ba \cdot \bd^S = 2^{m+n-3}$. So $\Bell \cdot \bd^S = c_1$.

Finally, if $S = A_2 \sqcup B_2$, then $S$ is not an edge of $C_1$, so $\bq_1 \cdot \bd^S = c_1$. Since $S$ is an edge of $C_2$, we have  by induction that $\bq_2 \cdot \bd^S = - 2^{m+n-3}$. By Proposition \ref{Prop:aNonzero}, $\ba \cdot \bd^S = 2^{m+n-3}$. So $\Bell \cdot \bd^S = c_1$.

Now we wish to show that if $S$ \emph{is} an edge of $C$, then $\Bell \cdot \bd^S = c_1 - 2^{m+n-3}$. In particular, this shows that $\Bell \cdot \bx \leq c_1$ is a valid inequality over the polytope and that all vertices of $\Gcut(\Dmn)$ either satisfy $\Bell \cdot \bd^S = c_1$ or $\Bell \cdot \bd^S = c_1 - 2^{m+n-3}$.

If $S$ is an edge of $C_1$, then it is not an edge of $C_2$. So we have $\bq_1 \cdot \bd^S = c_1 - 2^{m+n-3}$ by induction, and $\bq_2 \cdot \bd^S = 0$. By Lemma \ref{Lem:aEqualsZero}, $\ba \cdot \bd^S = 0$. So $\Bell \cdot \bd^S = c_1 - 2^{m+n-3}$.

If $S$ is an edge of $C_2$, then it is not an edge of $C_1$. So we have $\bq_1 \cdot \bd^S = c_1$. By induction, we have $\bq_2 \cdot \bd^S = -2^{m+n-3}$. By Lemma \ref{Lem:aEqualsZero}, $\ba \cdot \bd^S = 0$. So $\Bell \cdot \bd^S = c_1 - 2^{m+n-3}$.

Finally if $S = A_1 \sqcup B_2$ or $S = A_2 \sqcup B_1$, then $S$ is neither an edge of $C_1$ nor $C_2$. So $\bq_1 \cdot \bd^S = c_1$ and $\bq_2 \cdot \bd^S = 0$. By Proposition \ref{Prop:aNonzero}, $\ba \cdot \bd^S = -2^{m+n-3}$. So $\Bell \cdot \bd^S = c_1 - 2^{m+n-3}$, as needed.
\end{proof}

\begin{ex}
Let $m = n = 2$ and consider the length 4 cycles in $G^{2,2}_2$ pictured in Figure \ref{Fig:CycleGluing}. These cycles are $C_1 = \{\emptyset, 1, 3, 13\}$ and $C_2 = \{24, 124, 234, 1234\}.$ The cycle obtained by gluing $C_1$ and $C_2$ along $13$ and $24$ is $\{ \emptyset, 1, 3, 14, 23, 124, 234, 1234\}$. By Lemma \ref{Lem:Length4Cycles}, the facet-defining inequality for co-face $C_1$ is $\bq_1 \cdot \bx = x_2 + x_4 + x_{24} \leq 2$.
The co-face $C_2$ is obtained by switching $C_1$ with respect to $1234$. Indeed, taking the symmetric difference of each element of $C_1$ with respect to $1234$ yields $C_2$. The switching of the facet defining inequality is $\bq_2 \cdot \bx -x_2 - x_4 + x_{24} \leq 0.$

The linear functional $\bfa$ as described in Theorem \ref{Thm:ADualFacets} is $\bfa = -x_{13} + x_{14} + x_{23} + x_{24}.$
So the facet-defining inequality for the co-face $C$ is
\[
\Bell \cdot \bx = (\bq_1 + \bq_2 + \bfa) \cdot \bx = -x_{13} + x_{14} + x_{23} + x_{24} \leq 2.
\]
We can write $\Bell$ as the row vector $[ 0, 0, 0, 0, -1, 1, 1, 1 ]$.
The map that sends $\Corr(\Dmn)$ to $\Gcut(\Dmn)$ is
\[
\Phi = 
\begin{blockarray}{ccccccccc}
& 1 & 2 & 3 & 4 & 13 & 14 & 23 & 24 \\
\begin{block}{c(cccccccc)}
1 & 1 & 0 & 0 & 0 & 0 & 0 & 0 & 0 \\
2 & 0 & 1 & 0 & 0 & 0 & 0 & 0 & 0 \\
3 & 0 & 0 & 1 & 0 & 0 & 0 & 0 & 0 \\
4 & 0 & 0 & 0 & 1 & 0 & 0 & 0 & 0 \\
13 & 1 & 0 & 1 & 0 & -2 & 0 & 0 & 0 \\
14 & 1 & 0 & 0 & 1 & 0 & -2 & 0 & 0 \\
23 & 0 & 1 & 1 & 0 & 0 & 0 & -2 & 0 \\
24 & 0 & 1 & 0 & 1 & 0 & 0 & 0 & -2 \\
\end{block}
\end{blockarray}.
\]

So the inequality defining co-facet $C$ in $\Corr(\Delta)$ is
\[
(\Bell \Phi) \bfy = 2y_2 + 2y_4 + 2 y_{13} - 2y_{14} - 2y_{23} - 2y_{24} \leq 2,
\]
or $y_2 + y_4 + y_{13} - y_{14} - y_{23} - y_{24} \leq 1$. So we have the facet-defining linear functional $\bfp = [0,1,0,1,1,-1,-1,-1]$.

A pseudoinverse for the matrix sending the translated marginal polytope, $\Marg^0(\Dmn)$ to $\Corr(\Dmn)$ as defined in the appendex in \Cref{Prop:MargToCorr} is 
\[
\Omega = \begin{blockarray}{ccccccccccccccccc}
\begin{block}{c \BAmulticolumn{4}{>{\bf}c}\BAmulticolumn{4}{>{\bf}c}\BAmulticolumn{4}{>{\bf}c}\BAmulticolumn{4}{>{\bf}c}}
& 13 & 14 & 23 & 24 \\
& \overbrace{\qquad \qquad \qquad \qquad} & \overbrace{\qquad \qquad \qquad \qquad } & \overbrace{\qquad \qquad \qquad \qquad} & \overbrace{\qquad \qquad \qquad \qquad} \\
\end{block}
& \emptyset & 1 & 3 & 13 & \emptyset & 1 & 4 & 14 & \emptyset & 2 & 3 & 23 & \emptyset & 2 & 4 & 24 \\
\BAhline
\begin{block}{c(cccccccccccccccc)}
1&0&1/2&0&1/2&0&1/2&0&1/2&0&0&0&0&0&0&0&0\\
2&0&0&0&0&0&0&0&0&0&1/2&0&1/2&0&1/2&0&1/2\\
3&0&0&1/2&1/2&0&0&0&0&0&0&1/2&1/2&0&0&0&0\\
4&0&0&0&0&0&0&1/2&1/2&0&0&0&0&0&0&1/2&1/2\\
13&0&0&0&1&0&0&0&0&0&0&0&0&0&0&0&0\\
14&0&0&0&0&0&0&0&1&0&0&0&0&0&0&0&0\\
23&0&0&0&0&0&0&0&0&0&0&0&1&0&0&0&0\\
24&0&0&0&0&0&0&0&0&0&0&0&0&0&0&0&1\\
\end{block} 
\end{blockarray}.
\]

The inequality defining co-facet $C$ in $\Marg(\Delta)$ is
\begin{align*}
\bfr \cdot \bfz & := (\bfp \Omega) \bfz \\
&= z_{(13,13)} + \frac{1}{2}z_{(4,14)} - \frac{1}{2}z_{(14,14)} + \frac{1}{2}z_{(2,23)} - \frac{1}{2}z_{(23,23)} + \frac{1}{2}z_{(2,24)} + \frac{1}{2}z_{(4,24)} \leq 1.
\end{align*}

\end{ex}

The Lawrence lifting operation also preserves unimodularity of a simplicial complex \cite{unimodHierModels2017}. For this reason, we would like to understand how the $\cH$-representation of $\Gcut(\Delta)$ relates to that of $\Gcut(\Lambda(\Delta))$. 

\begin{defn}
The Lawrence lifting of a simplicial complex $\D$ on ground set $[n]$, denoted $\Lambda(\Delta)$, is the simplicial complex on the set $[n]\cup \{\ell\}$ with facets
$\{[n] \} \cup\{F\cup \{\ell\}~|~ F\in \facet(\D)\}.$
In particular, the Lawrence lifting of $2^{[m]}\sqcup 2^{[n]}$ is a simplical complex with facets 
\begin{align*}
  \facet(\Lambda(2^{[m]} \sqcup 2^{[n]})) =  \{[m]\sqcup [n], [m]\cup\{\ell\},[n]\cup \{\ell\} \}.
\end{align*}
For example, Figure \ref{fig:LawAlex} shows the Lawrence lifting of the disjoint union of a $0$-simplex with a $1$-simplex. 
\end{defn}

A unified $\cH$-representation for $\Gcut(\Lambda(2^{[m]} \sqcup 2^{[n]}))$, combined with the $\cH$-representation of $\Gcut(\Dmn)$ given in the present paper, could prove useful for understanding the facet descriptions of binary marginal polytopes of arbitrary unimodular simplicial complexes. 
 In order to obtain a co-facet description for the generalized cut polytope, one can study the oriented matroid for $\Gcut(\Delta)$ since any signed circuit in $\mathcal{B}$ corresponds to two positive circuits in $\mathcal{B}(\Lambda(\D))$. One also obtains the two-element circuits $\{\bfb_S,\bfb_{S\sqcup \{p\}}\}$ for any $S\subset [n]$. In the case where $\Delta = 2^{[m]} \sqcup 2^{[n]}$, the matroid underlying the Gale dual is well-understood \cite{unimodMatriod2017}.
 Another approach may be to find $\cH$-representations of the no-three-way interaction model (see \Cref{prop:3cycledegree}) and adapt the results for our case. 
 
 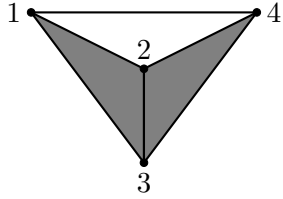
\begin{figure} 
    \centering
\begin{tikzpicture}
    \draw[thick, fill=gray] (1,0)--(-.5,2)--(1,1.25)--(1,0);
    \draw[thick, fill=gray]
    (1,0)--(2.5,2)--(1,1.25)--(1,0);
    \draw[thick] (2.5,2)--(-.5,2);
    \draw[fill] (1,0) circle [radius = .05];
    \node[below] at (1,0) {3};
    \draw[fill] (-.5,2) circle [radius = .05];
    \node[left] at (-.5,2) {1};
    \draw[fill] (2.5,2) circle [radius = .05];
    \node[right] at (2.5,2) {4};
    \draw[fill] (1,1.25) circle [radius = .05];
    \node[above] at (1,1.25) {2};
\end{tikzpicture}
 \caption{The Lawrence lifting of $2^{[1]}\sqcup 2^{[2]}$.}\label{fig:LawAlex}
\end{figure}

\section{The Degree of Some Binary Hierarchical Models}
 
This section discusses the degrees of some binary hierarchical models. This is the algebraic degree  of the toric ideal with the design matrix of the model as its presentation matrix (see \cite[Chapter~4]{sturmfels2008}). Among other uses, the degree of the hierarchical model serves as an upper bound for its maximum likelihood degree \cite[Corollary~8]{toricMLDegree}. Develin and Sullivant in \cite{develin2003}  compute the degree of the binary graph models when the underlying graph is a forest. In \Cref{thm:degreeCone} we connect the degree of a hierarchical model and its cone.  In \Cref{cor:degreeTurtle} we compute the degree of a binary hierarchical model when the underlying structure is a turtle complex. 
The rest of the section gives examples and discusses  the degree of unimodular hierarchical models that are Lawrence liftings and Alexander duals of the disjoint union of two simplices.

Let  $U_{\D}$ be  the design matrix for the binary hierarchical model with simplicial complex $\D$ on ground set $[n]$. Take the polynomial rings $R_n=\K[p_{S}~|~ S\subset [n]]$ and $\K[\theta_{(H,F)}~|~ H\subset F\in \facet(\D)]$ over some field $\K$, and consider the toric map between them: 
  \begin{equation}
  \label{eq:toric}
 \begin{split}
 f_{\D} \colon R_{n}=\K[p_{S} \mid  S\subset [n]] & \xrightarrow{~U_{\D}~}
  \K[\theta_{(H,F)}~|~ H\subset F\in \facet(\D)],  \\
 p_{S}& \longmapsto \prod_{F\in\facet(\D)}\theta_{(S\cap F,F)}.
 \end{split}
 \end{equation}
The \emph{toric ideal} $I(\D)$ for the hierarchical model $\cM(\D)$ and the marginal polytope $\Marg(\D)$ is the kernel of the map $f_{\D}$. The dimension of this ideal, denoted $\dim_{\K}(R_n\slash I(\D))$, is equal to the number of faces in $\D$ since it is one more than the dimension of the polytope $\Marg(\D)$ discussed in \Cref{thm:dim}. The \emph{degree} of $I(\D)$, denoted $\deg(I(\D))$,  is the number of points in the intersection of its defining variety with $\dim_{\K}(R_n\slash I(\D))$  hyperplanes in general position, counted with multiplicity. This is also the normalized volume of $\Marg(\D)$ \cite{sturmfels1996}. Algebraically, the dimension and the degree of $I(\D)$ are recorded in the reduced form of the Hilbert series for  $I(\D)$. Recall that the Hilbert series for $I(\D)$ is the formal power series 
\[ 
H_{R_n\slash I(\D)}(t)=\sum\limits_{d \ge 0}\dim_{\K} [{R_n\slash I(\D)}]_d t^d,
\]
where $[{R_n\slash I(\D)}]_d $ is the $d$-th graded component of $R_n\slash I(\D)$. 
By Hilbert's theorem \cite[Corollary 4.1.8]{conca1994}, this series is rational and can be uniquely written in its reducible form as 
\begin{align}
\label{eq:hilbert}
H_{R_n\slash I(\D)}(t)= \dfrac{k(t)}{(1-t)^{\dim_{\K}(R_n\slash I(\D))}},    
\end{align}
with $k(t)\in \mathbb{Z}[t]$ and $k(1) > 0$, unless $I(\D) = R_n$. The evaluation  of $k(t)$ at $t=1$ is the degree of $I(\D)$.

\begin{rmk}
One can similarly associate toric ideals to the correlation polytope and the generalized cut polytope.  \Cref{thm:dim} implies that these ideals share toric geometric properties, including their dimension and degree.
\end{rmk}

We start off by computing  in \Cref{ex:degreeDisjointUnion} the degree of the ideal for the disjoint union of two simplices. As we will see, this ideal is isomorphic to the Segre embedding of two projective planes, whose degree is computed by Herzog and Trung in \cite{herzog1992}.  The example gives an alternative shorter proof of this result using the rational form of the Hilbert series for the Segre embedding found in \cite{conca1994}. \Cref{ex:alexdegree} computes the degree when the simplicial complex is the Alexander dual $D_{1,n}$. 

\begin{ex}
\label{ex:degreeDisjointUnion}
The ideal  $I({2^{[m]}\sqcup2^{[n]}})$ has degree $\binom{2^{m}+2^{n}-2}{2^{m}-1}$. 
\end{ex}

\begin{proof}
The toric map $f_{\D}$  in \Cref{eq:toric} for the binary hierarchical model $\Marg(2^{[m]}\sqcup2^{[n]})$ has form:
  \begin{equation*} 
 \begin{split}
 f_{\D} \colon R_{m+n}=\K[p_{S\cup T} \mid  S\subset [m], T\subset [n]] & \xrightarrow{~U_{\D}~}
 \K[\theta_{(S,[m])},\theta_{(T,[n])} ~|~ S\subset [m], T\subset [n]],  \\
 p_{S\cup T}& \longmapsto \theta_{(S,[m])}\cdot \theta_{(T,[n])}.
 \end{split}
 \end{equation*}
Up to renaming of the variables, the above is the Segre embedding of \(\mathbb{P}^{2^m-1}\times \mathbb{P}^{2^n-1}\) in  \cite[Proposition~9.45]{hassett2007}.  
Its reduced Hilbert series has form (see \cite{conca1994}):
\begin{align*}
   H_{R_{m+n}\slash I(\D)}(t)= \dfrac{\sum\limits_{i=0}^{2^m-1}{\binom{2^{m}-1}{i}}{\binom{2^{n}-1}{i}}t^i}{(1-t)^{2^{m}+2^{n}-1}},
\end{align*}
which implies that the degree of the ideal is:
\begin{align*}
   \deg(I(\D))&= \sum\limits_{i=0}^{2^m-1}{{2^{m}-1}\choose{i}}{{2^{n}-1}\choose{i}}
   &= \sum\limits_{i=0}^{2^m-1}{{2^{m}-1}\choose{2^m-1-i}}{{2^{n}-1}\choose{i}}
   &= {{2^{m}+2^{n}-2}\choose{2^{m}-1}},
\end{align*}{}
The last equality is the Vandermonde's identity.
\end{proof}

\begin{ex}
\label{ex:alexdegree}
The ideal $I(D_{1,n})$ has degree $2^{n-1}$.
\end{ex}

\begin{proof}
According to \Cref{sec:alexDual} $D_{1,n}=\{ S \sqcup T \mid S \subsetneq [1], T \subsetneq [n]\}$ is the boundary of the simplex $2^{[n]}$. By  \cite[Theorem~2.8]{reducCyclic2002}, the ideal $I(2^{[n]}\setminus [n])$ is generated by only one binomial of degree  $2^{n-1}$. Hence,  the degree of our ideal is $2^{n-1}$. 
\end{proof}
The following theorem computes the degree of $I({\cC^{k}(\D))})$  using the degree of  $I(\D)$.
\begin{thm}
\label{thm:degreeCone}
 The degree of $I({\cC(\D))}$  is  $\deg(I(\D))^2$. 
\end{thm}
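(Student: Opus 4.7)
The plan is to identify $R_{n+1}/I(\cC(\D))$ as a tensor product of two copies of $R_n/I(\D)$ and then multiply Hilbert series, using \Cref{eq:hilbert} to extract degrees.

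First, after reordering rows and columns, $U_{\cC(\D)}$ is block diagonal with two copies of $U_\D$. Partition the rows, indexed by $(H, F\cup\{\ell\})$, by whether $\ell \in H$, and the columns, indexed by $S \subseteq [n] \cup \{\ell\}$, by whether $\ell \in S$. The defining condition $S \cap (F \cup \{\ell\}) = H$ forces $\ell \in H$ exactly when $\ell \in S$, so the two off-diagonal blocks vanish. In the block with $\ell \notin H, S$ the condition reduces to $S \cap F = H$, recovering $U_\D$; in the other diagonal block, writing $H = H' \cup \{\ell\}$ and $S = S' \cup \{\ell\}$ reduces the condition to $S' \cap F = H'$, again recovering $U_\D$.

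Because the two column groups map into disjoint sets of $\theta$-variables, this block decomposition implies $f_{\cC(\D)}$ factors as an external product of two copies of $f_\D$. Identify $R_{n+1}$ with $R_1 \otimes_\K R_2$ where $R_1 = \K[p_S : \ell \notin S]$ and $R_2 = \K[p_{S \cup \{\ell\}} : S \subseteq [n]]$, both isomorphic to $R_n$ as graded $\K$-algebras. The standard isomorphism $(R_1 \otimes_\K R_2)/(I_1 + I_2) \cong (R_1/I_1) \otimes_\K (R_2/I_2)$ then yields
\[
R_{n+1}/I(\cC(\D)) \;\cong\; (R_n/I(\D)) \otimes_\K (R_n/I(\D))
\]
as graded $\K$-algebras.

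Since Hilbert series of tensor products of graded $\K$-algebras multiply, writing $H_{R_n/I(\D)}(t) = k(t)/(1-t)^{\#\D}$ as in \Cref{eq:hilbert} with $k(1) = \deg(I(\D))$ immediately gives
\[
H_{R_{n+1}/I(\cC(\D))}(t) \;=\; \frac{k(t)^2}{(1-t)^{2\#\D}}.
\]
Since $\cC(\D) = \D \sqcup \{G \cup \{\ell\} : G \in \D\}$, we have $\#\cC(\D) = 2\#\D$, so the exponent of $(1-t)$ above matches the Krull dimension and the expression is already in reduced form. Evaluating at $t = 1$ gives $\deg(I(\cC(\D))) = k(1)^2 = \deg(I(\D))^2$. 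The main obstacle is the bookkeeping required to exhibit the block-diagonal structure of $U_{\cC(\D)}$; once that is in place, the remainder is a routine application of Hilbert series multiplicativity.
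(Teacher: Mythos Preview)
Your proof is correct and follows essentially the same route as the paper: both arguments observe that the toric map for $\cC(\D)$ splits into two independent copies of the toric map for $\D$ (you phrase this as block-diagonality of $U_{\cC(\D)}$, the paper phrases it as the two groups of $p$-variables mapping to disjoint sets of $\theta$-variables), deduce the tensor-product decomposition $R_{n+1}/I(\cC(\D))\cong (R_n/I(\D))\otimes_\K (R_n/I(\D))$, multiply Hilbert series, and read off the degree. Your explicit verification that $\#\cC(\D)=2\#\D$ so that the resulting Hilbert series is already in reduced form is a nice touch that the paper leaves implicit.
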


\begin{proof}
Let $f_{\cC(\D)}$ be the toric map for $\cC(\D)$. 
 \begin{equation*}
 \begin{split}
 f_{\cC(\D)} \colon R_{n+1}=\K[p_{S},p_{S\cup \{\ell\}} \mid  S\subset [n]] & \xrightarrow{~U_{\cC(\D)}~}
 \K[\theta_{(H,F\cup\{\ell\})},\theta_{(H\cup\{\ell\},F\cup\{\ell\})} ~|~ H\subset F\in \facet(\D)],  \\
 p_{S}& \longmapsto \prod\limits_{F\in\facet(\D)}\theta_{(S\cap F,F\cup \{\ell\})},\\ p_{S\cup\{\ell\}}& \longmapsto \prod\limits_{F\in\facet(\D)}\theta_{(S\cap F\cup \{\ell\},F\cup \{\ell\})}.
 \end{split}
 \end{equation*}

The set of variables $\{p_{S}~|~ S\subset [n]\}$ gets mapped to monomials in variables $\{\theta_{(H,F\cup\{\ell\})}~|~H\subset F\in \facet(\D) \}$. The set of variables $\{p_{\cup\{\ell\}}~|~ S\subset [n]\}$ gets mapped to monomials in variables $\{\theta_{(H\cup\{\ell\},F\cup\{\ell\})}~|~H\subset F\in \facet(\D) \}$. Since these sets of variables are disjoint, we may consider the two restricted maps

 \begin{equation*} 
 \begin{split}
 f_{\D} \colon R_{n}=\K[p_{S} \mid  S\subset [n]] & \xrightarrow{~U_{\D}~}
 \K[\theta_{(H,F\cup\{\ell\})} ~|~ H\subset F\in \facet(\D)],  \\
 p_{S}& \longmapsto \prod\limits_{F\in\facet(\D)}\theta_{(S\cap F,F\cup \{\ell\})},
 \end{split}
 \end{equation*}
 and 
  \begin{equation*} 
 \begin{split}
 f_{\D\sqcup \{\ell\}} \colon R'_{n}=\K[p_{S\cup\{\ell\}} \mid  S\subset [n]] & \xrightarrow{~U_{\D}~}
 \K[\theta_{(H\cup\{\ell\},F\cup\{\ell\})} ~|~ H\subset F\in \facet(\D)],  \\
 p_{S\cup\{\ell\}}& \longmapsto \prod\limits_{F\in\facet(\D)}\theta_{(S\cap F\cup \{\ell\},F\cup \{\ell\})}.
 \end{split}
 \end{equation*}
The kernel of the first map is $I(\D)$. Denote the kernel of the second map with $I(\D\sqcup \ell)$.  The isomorphism from $R_n$ to $R'_n$ that maps each $p_S$ to $p_{S\cup\{\ell\}}$  maps $I(\D)$ to $I_{\D\sqcup \ell}$. 
$R_n$ and $R'_n$ (consequently  $I(\D)$ and $I({\D\sqcup \{\ell\}})$) have zero intersection in $R_{n+1}$. Standard results from commutative  algebra give 
\begin{align*}
  R_{n+1}\slash I(\cC(\D)) &= R_{n+1}\slash (I(\D)+I(\D\sqcup p))\\
  &\cong (R_{n}\slash I(\D)) \otimes_{R_{n+1}} (R'_{n}\slash I(\D\sqcup p))\\
  &\cong (R_{n}\slash I(\D)) \otimes_{R_{n+1}} (R_{n}\slash I(\D)).
\end{align*}

From here, given the Hilbert series of $I(\D)$ in the form \Cref{eq:hilbert}, a  rational  form of the Hilbert series for $I(\cC(\D))$ is

\[H_{R_{n+1}\slash I_{\cC(\D)}}(t)=[H_{R_n\slash I_{\D}}(t)]^2=\dfrac{k(t)^2}{(1-t)^{2dim(R_n\slash I_{\D})}}.\]
This is the  reduced Hilbert series for $I(\cC(\D))$ since  power of the  factor $(1-t)$ in the denominator is equal to the dimension of $I({\cC(\D)})$. Hence, the degree of $I(\cC(\D))$ is $(k(1))=(\deg(I(\D))^2$.
\end{proof}

An immediate consequence of \Cref{thm:degreeCone} is the degree of ideals for turtle complexes. 

\begin{cor}
\label{cor:degreeTurtle}
The ideal $I_{\mathbb{T}_n^k}$ has degree $2^{(k-1)2^{(n-k)}}$.
\end{cor}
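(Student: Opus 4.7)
The plan is to combine three earlier ingredients: the identification of $\mathbb{T}_n^k$ as an iterated cone (Observation \ref{obs:turtleCones}), the degree of the boundary of a simplex (Example \ref{ex:alexdegree}), and the multiplicative behavior of degree under coning (Theorem \ref{thm:degreeCone}).

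First, I would recall from Observation \ref{obs:turtleCones} that
\[
\mathbb{T}_n^k \;=\; \cC^{n-k}\bigl(2^{[k]}\setminus\{[k]\}\bigr),
\]
so $\mathbb{T}_n^k$ is obtained as an $(n-k)$-fold cone over the boundary of the $(k-1)$-simplex. This reduces the computation of $\deg(I_{\mathbb{T}_n^k})$ to iterating Theorem \ref{thm:degreeCone}, which asserts $\deg(I(\cC(\D))) = \deg(I(\D))^2$. A straightforward induction on the number of cone operations then gives, for any simplicial complex $\D$ and any $j\geq 0$,
\[
\deg\bigl(I(\cC^{j}(\D))\bigr) \;=\; \deg(I(\D))^{2^{j}}.
\]
The inductive step is immediate from Theorem \ref{thm:degreeCone} applied to $\cC^{j-1}(\D)$, together with the identity $\cC^{j}(\D)=\cC(\cC^{j-1}(\D))$ from \eqref{eq:cone}.

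Next I would plug in the base case. By Example \ref{ex:alexdegree} (or equivalently by the fact cited there from \cite{reducCyclic2002} that $I(2^{[k]}\setminus\{[k]\})$ is principal, generated by a single binomial of degree $2^{k-1}$), we have
\[
\deg\bigl(I(2^{[k]}\setminus\{[k]\})\bigr) \;=\; 2^{k-1}.
\]
Combining the two displays with $j=n-k$ yields
\[
\deg(I_{\mathbb{T}_n^k}) \;=\; \bigl(2^{k-1}\bigr)^{2^{n-k}} \;=\; 2^{(k-1)2^{n-k}},
\]
as claimed.

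There is essentially no obstacle here: all the real work has been done in Theorem \ref{thm:degreeCone} (where the tensor product decomposition of coordinate rings yields the squaring of the degree via the Hilbert series) and in Example \ref{ex:alexdegree}. The only thing to be careful about is the edge case $k=n$, where $n-k=0$ and the formula gives $\deg(I_{\mathbb{T}_n^n}) = 2^{n-1}$, consistent with Example \ref{ex:alexdegree} since $\mathbb{T}_n^n = 2^{[n]}\setminus\{[n]\}$ is itself the boundary of the simplex and the zero-fold cone is the identity operation.
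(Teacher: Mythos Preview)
Your proof is correct and follows essentially the same approach as the paper: identify $\mathbb{T}_n^k$ as an $(n-k)$-fold cone over the boundary of the simplex via Observation~\ref{obs:turtleCones}, use that $I(2^{[k]}\setminus\{[k]\})$ has degree $2^{k-1}$, and iterate Theorem~\ref{thm:degreeCone} to square the degree $n-k$ times. Your explicit treatment of the edge case $k=n$ is a nice addition, since Observation~\ref{obs:turtleCones} as stated assumes $k<n$.
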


\begin{proof}
From \Cref{obs:turtleCones}, we need to compute the degree of $I({\D})$ for $\D=\cC^{n-k}(2^{[k]}\setminus [k])$. The ideal $I({2^{[k]}\setminus [k]})$ has degree $2^{k-1}$ since it is generated by one binomial of degree $2^{k-1}$ \cite[Theorem~2.8]{reducCyclic2002}.  Applying \Cref{thm:degreeCone} $n-k$ times inductively over $2^{[k]}\setminus [k]$ in the same order indicated by \Cref{eq:cone} concludes that the degree of $I(\cC^{n-k}(2^{[k]}\setminus [k]))$ is $2^{(k-1)2^{(n-k)}}$. 

\end{proof}

In the last part of this section we discuss the degree of the ideal for the Lawrence lifting of the disjoint union of two simplices. Intuitively, by contracting the two simplices to points, one transforms the Lawrence lifting of the disjoint union of two simplices  to the no-three-way interaction hierarchical models, and obtains the following result.  

\begin{lemma}
The marginal polytopes $\Marg(\Lambda(2^{[m]}\sqcup2^{[n]}))$ and $\Marg([12][23][13],(2,2^m,2^n))$ are isomorphic.  
\end{lemma}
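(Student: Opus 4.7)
The plan is to construct an explicit bijection that identifies the two design matrices up to permutations of rows and columns, which then immediately yields the desired isomorphism of marginal polytopes. The key observation is that a binary string indexed by $[m]\sqcup[n]\cup\{\ell\}$ carries the same information as a triple in $[2]\times [2^m]\times [2^n]$: identify subsets of $\{\ell\}$, of $[m]$, and of $[n]$ with the state spaces of the three random variables in the no-three-way interaction model. Concretely, define
\[
\Phi\colon 2^{[m]\sqcup[n]\cup\{\ell\}} \longrightarrow [2]\times[2^m]\times[2^n],\qquad S\longmapsto (S\cap\{\ell\},\, S\cap[m],\, S\cap[n]).
\]
This bijection indexes the columns of both design matrices by the same underlying set.

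Next I would match facets and the resulting row indices. The facets of $\Lambda(2^{[m]}\sqcup 2^{[n]})$ are $[m]\sqcup[n]$, $[m]\cup\{\ell\}$, and $[n]\cup\{\ell\}$, while those of $[12][23][13]$ are $\{2,3\}$, $\{1,2\}$, and $\{1,3\}$. The natural pairing is
\[
[m]\sqcup[n]\leftrightarrow\{2,3\},\qquad [m]\cup\{\ell\}\leftrightarrow\{1,2\},\qquad [n]\cup\{\ell\}\leftrightarrow\{1,3\}.
\]
Under $\Phi$, restricting a subset $S$ to one of these facets coincides with projecting $\Phi(S)$ onto the corresponding pair of coordinates; for example, restricting to $[m]\cup\{\ell\}$ records exactly $S\cap\{\ell\}$ and $S\cap [m]$, which are the first two entries of $\Phi(S)$. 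Consequently $\Phi$ extends to a bijection between row indices $(H,F)$ on the Lawrence-lifting side and row indices $(\bfi_{F'},F')$ on the no-three-way interaction side, where $F'$ is the facet paired with $F$.

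Finally I would verify that the two design matrices agree entry-wise under these identifications. By the definition recalled after \Cref{eqn:margEntries}, the entry $u^{\bfj}_{(\bfi_{F'},F')}$ of $U_{[12][23][13],(2,2^m,2^n)}$ is $1$ iff $\bfj_{F'}=\bfi_{F'}$, and the entry $u^{S}_{(H,F)}$ of $U_{\Lambda(2^{[m]}\sqcup 2^{[n]})}$ is $1$ iff $S\cap F=H$. The previous paragraph shows that these two conditions are equivalent under $\Phi$ and the facet pairing. Hence the two design matrices coincide up to row and column permutations, and therefore their column sets, and so also $\Marg(\Lambda(2^{[m]}\sqcup 2^{[n]}))$ and $\Marg([12][23][13],(2,2^m,2^n))$, are (combinatorially) isomorphic.

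The only real work lies in the bookkeeping to confirm the compatibility of $\Phi$ with restrictions on both sides; there is no deep obstacle, because the geometric content is essentially the statement that grouping the $m$ binary variables sitting over one simplex into a single $2^m$-state variable (and similarly for the other simplex) turns $\Lambda(2^{[m]}\sqcup 2^{[n]})$ into the three-variable no-three-way interaction model with state vector $(2,2^m,2^n)$.
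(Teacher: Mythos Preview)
Your proposal is correct and follows the same approach as the paper: both arguments identify a binary string in $\{0,1\}\times\{0,1\}^m\times\{0,1\}^n$ with an element of $[2]\times[2^m]\times[2^n]$ via bijections on each factor, thereby matching the column indices of the two design matrices. You spell out explicitly the facet pairing and the entry-by-entry verification that the paper leaves implicit, but the idea is identical.
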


\begin{proof}
Recall the string definition of the marginal polytope in \Cref{eqn:margEntries}. Let $f$ be a bijection from $\{0,1\}^{m}$ to $[2^m]$ and let $g$ be a bijection from $\{0,1\}^{n}$ to $[2^n]$.  The induced map $(\mathrm{id}_{[2]},f,g)$ applied to the indices of vectors $\mathbf{u}^{i,\mathbf{j},\mathbf{k}}$, for $(i,\mathbf{j},\mathbf{k})\in \{0,1\}\times \{0,1\}^{m} \times  \{0,1\}^n$  serves as a bijection between  $\Marg(\Lambda(2^{[m]}\sqcup2^{[n]}))$ and $\Marg([12][23][13],(2,2^m,2^n))$
\end{proof}

The no-three-way interaction models are famous for being challenging models in algebraic statistics, and very little is known about their  degree. The following is the most current result on this topic.  
\begin{prop}\cite[Proposition~34]{toricMLDegree} 
\label{prop:3cycledegree}
The degree of no-three-way interaction hierarchical  model $\cM(\{[12][13][23]\},(2,2,n))$ is $n2^{n-1}$. 
\end{prop}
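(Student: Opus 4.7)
The plan is to compute the degree as the normalized volume of the marginal polytope $\Marg([12][13][23], (2,2,n))$, using Sturmfels' correspondence \cite{sturmfels1996} between the degree of a toric ideal and the normalized volume of the convex hull of the columns of its design matrix. This polytope has $4n$ vertices $\bu^{ijk}$ indexed by $(i,j,k) \in [2]\times[2]\times[n]$; its coordinates split into the $4$ entries of the $[12]$-margin, the $2n$ entries of the $[13]$-margin, and the $2n$ entries of the $[23]$-margin.

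First I would settle the base case $n = 2$. The $2\times 2\times 2$ no-three-way interaction toric ideal is principally generated by the famous quartic binomial $p_{000}p_{011}p_{101}p_{110} - p_{001}p_{010}p_{100}p_{111}$, so its degree is exactly $4 = 2\cdot 2^{1}$, matching the claimed formula.

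For the inductive step, I would view the $(2,2,n{+}1)$ model as a toric fiber product of the $(2,2,n)$ model with a single new $2\times 2$ slice, glued along their shared $[12]$-margin: the $[13]$- and $[23]$-margin coordinates for the new third-variable value $n{+}1$ are fresh, so the gluing is transverse over the binary-binary margin. Applying the degree and normalized-volume formulas for toric fiber products from the framework of \cite{higherCodimTFP}, one should obtain a linear recursion in $n$ whose closed-form solution is $n\cdot 2^{n-1}$. An alternative route is to exhibit an explicit unimodular pulling triangulation from a distinguished vertex $\bu^{ijk_0}$ and interpret the count $n \cdot 2^{n-1}$ combinatorially: $n$ choices of the pivot slice $k_0 \in [n]$ times $2^{n-1}$ independent binary ``flip'' choices in the remaining $n-1$ slices.

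The main obstacle in either approach is a delicate technical step. In the fiber-product route it is verifying the normality and compatibility hypotheses under which the volume formula is exact across the gluing; in the triangulation route it is proving unimodularity of the candidate triangulation. No-three-way interaction models are notoriously subtle and their Markov bases grow without bound in general, so even in the mild case where two variables are binary one must exploit the additional rigidity provided by the binary state spaces (and, in particular, the fact that the $2\times 2\times 2$ ideal is a hypersurface) to make either argument go through cleanly.
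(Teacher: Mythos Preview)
The paper does not prove this proposition: it is quoted verbatim from \cite[Proposition~34]{toricMLDegree} and used as a black box, so there is no argument in the paper to compare yours against.

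On its own terms, what you have written is a plan with acknowledged gaps rather than a proof. The base case $n=2$ is fine. For the toric fiber product route, the decomposition you describe is not obviously a TFP in the sense of \cite{higherCodimTFP}: the facets $[13]$ and $[23]$ couple every state of the third variable to variables $1$ and $2$, so peeling off a single state of variable~$3$ does not straightforwardly produce two models glued along their common $[12]$-margin, and in any case the gluing would not be of codimension zero. Even granting such a structure, the standard TFP degree formulas are multiplicative, whereas the target sequence satisfies $a_{n+1}=2a_n+2^n$; you have not indicated where the additive $2^n$ term would arise. The triangulation route is likewise only a heuristic: matching the count $n\cdot 2^{n-1}$ to ``pivot times flips'' is suggestive, but proving that the resulting collection of simplices is a triangulation, and that each simplex is unimodular, is exactly the hard step you flag but do not carry out.
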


The equivalence among $\Lambda(2^{[m]}\sqcup 2^{[n]})$ and the  no-three-way-interaction models indicates the level of difficulty for our problem, and it offers a new perspective. In particular, one immediate  consequence of \Cref{prop:3cycledegree} is that the degree of $I(\Lambda(2^{[1]}\sqcup 2^{[n]})$ is $2^{2^{n}+n-1}$. Computations in Macaulay2 \cite{Macaulay2} show that the degree of  $I(\Lambda(2^{[2]\sqcup}2^{[2]}))$ is $4096$. We end this paper by conjecturing that in general the degree of  $I(\Lambda(2^{[m]}\sqcup 2^{[n]}))$ is $2^{m(2^n-1)+n(2^m-1)}$. It is computationally challenging to check the conjecture for larger values of $m$ and $n$, since the degree is expected to be very large.

\section*{Acknowledgements}

The authors would like to thank Daniel Irving Bernstein, Benjamin Braun, Christopher Manon, and Seth Sullivant for many helpful conversations. We are also grateful to the Triangle Lectures in Combinatorics, which brought our group together and facilitated our work on this project. Jane Ivy Coons was partially supported by the US National Science Foundation (DGE 1746939). Benjamin Hollering was partially supported by the US National Science Foundation (DMS 1615660). Aida Maraj was partially supported by the Max-Planck-Institute for Mathematics in the Sciences.

\appendix

\section{Linear Transformations among Correlation, Marginal, and the Generalized Cut Polytope}

In this appendix, we describe the maps among the binary marginal polytope, correlation polytope, and generalized cut polytope associated to a simplicial complex $\D$.

Take the linear transformation from $\R^{\Bar{\D}}$ to itself defined by the matrix $\Psi_{\Delta}$ with entries
\[(\Psi_{\Delta})^{G}_{H}= \begin{cases}{}
\dfrac{(-1)^{\#G -1}}{2^{\#H-1}} & if \: G\subset H\\
0 & if \: G\nsubseteq H.
\end{cases}
\]

\begin{prop}\label{Prop:CutToCorr}
The linear map $\Psi_{\Delta}$ sends $\Gcut(\Delta)$ to $\Corr(\Delta)$.
\end{prop}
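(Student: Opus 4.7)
The plan is to identify $\Psi_\Delta$ with the inverse of $\Phi_\Delta$ and then invoke \Cref{thm:dim}. Since \Cref{thm:dim} already shows that $\phi_\Delta$ is a linear isomorphism from $\R^{\bar\Delta}$ to itself which restricts to an isomorphism $\Corr(\Delta) \to \Gcut(\Delta)$, once we know $\Psi_\Delta \Phi_\Delta = I$ we get automatically that $\psi_\Delta := \phi_\Delta^{-1}$ restricts to an isomorphism $\Gcut(\Delta) \to \Corr(\Delta)$. So the content of the proof reduces to a matrix computation.

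Concretely, I would compute the $(G,F)$-entry of the product $\Phi_\Delta \Psi_\Delta$ directly from the definitions. Only indices $H$ with $G \subset H \subset F$ contribute, and the summand in row $H$ equals
\[
(-2)^{\#H-1}\cdot\frac{(-1)^{\#G-1}}{2^{\#H-1}} = (-1)^{\#H+\#G},
\]
after combining signs. Therefore
\[
(\Phi_\Delta\Psi_\Delta)^G_F \;=\; \sum_{G\subset H\subset F}(-1)^{\#H+\#G}.
\]
When $G = F$, the only term is $H=G=F$, giving $1$. When $G\subsetneq F$, parametrizing $H$ by a subset of $F\setminus G$ of size $i$ (with $m := \#(F\setminus G)\geq 1$) gives
\[
(\Phi_\Delta\Psi_\Delta)^G_F \;=\; \sum_{i=0}^{m}\binom{m}{i}(-1)^{i} \;=\; 0.
\]
Hence $\Phi_\Delta\Psi_\Delta = I$, so $\Psi_\Delta = \Phi_\Delta^{-1}$.

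To finish: since $\phi_\Delta$ is the inverse of $\psi_\Delta$ as a linear automorphism of $\R^{\bar\Delta}$ and, by \Cref{thm:dim}, $\phi_\Delta$ maps each vertex $\bv^S$ of $\Corr(\Delta)$ to the corresponding vertex $\bd^S$ of $\Gcut(\Delta)$, we conclude that $\psi_\Delta(\bd^S) = \bv^S$ for every $S\subset[n]$. Since $\psi_\Delta$ is linear and $\Gcut(\Delta)$ is the convex hull of the $\bd^S$, this implies $\psi_\Delta(\Gcut(\Delta)) = \Corr(\Delta)$, as desired.

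There is no real obstacle here; the only nontrivial step is the telescoping sign computation $\sum_{i=0}^{m}\binom{m}{i}(-1)^i = 0$, which handles the off-diagonal entries. All other assertions are routine book-keeping with the lower-triangular structures of $\Phi_\Delta$ and $\Psi_\Delta$ with respect to the partial order by inclusion on $\bar\Delta$.
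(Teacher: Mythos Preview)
Your proposal is correct and follows essentially the same approach as the paper: both prove the proposition by verifying directly that $\Phi_\Delta\Psi_\Delta = I$ via the identity $\sum_{G\subset H\subset F}(-1)^{\#H+\#G}=0$ for $G\subsetneq F$, and then appeal to \Cref{thm:dim}. The only cosmetic slip is that you announce you will show $\Psi_\Delta\Phi_\Delta=I$ but then compute $\Phi_\Delta\Psi_\Delta$; since the matrices are square this is harmless.
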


\begin{proof}
Let $\Psi = \Psi_\Delta$. We will  prove that $\Psi$ is the inverse of the matrix $\Phi:=\Phi_{\D}$ for the generalized covariance map in \Cref{def:gencovmap}, by showing that their product is the identity matrix. Take  $F,G\in \Bar{\D}$ and compute the entries of this product:
\begin{align*}
    (\Phi\cdot\Psi)_F^G=\Phi_F\cdot \Psi^G=\sum_{H\in \Bar{\D}} \Phi_F^H\Psi^G_H.
\end{align*}{}
 
The entries $\Phi_F^H$ and $\Psi_H^G$  are both nonzero only  if $G\subset H\subset F$. When $F=G$, i.e. in the diagonal entries of $\Phi\cdot\Psi$, the 
product $\Phi_F^H\Psi^F_H$ is nonzero only when $H=F$, which produces  $ (\Phi\cdot \Psi)_F^F=1$. When $F\neq G$  one has 
\begin{align*}
    (\Phi\cdot\Psi)_F^G&= \sum_{G\subset H\subset F}(-2)^{\#H}\cdot \dfrac{(-1)^{\#G-1}}{2^{\#H-1}}\\ =& (-1)^{\#G-1}\sum_{G\subset H\subset F}(-1)^{\#H-1}\\
    =&(-1)^{\#G-1}\sum_{G\subset H\subset F }(-1)^{\#G+ \#(H\backslash  G)-1}\\
    = &(-1)^{2(\#G-1)}\sum_{H' \subset F\backslash  G }(-1)^{\#G-1}(-1)^{\#(H')} \\
    =&1\cdot 0=0. 
\end{align*}{}
\end{proof}

For each $T \in \Delta$, let $f(T)$ denote the number of facets $F \in \Delta$ such that $T \subset F$. 
Define the matrix $\Omega_{\Delta} \in \R^{\overline{\D} \times \mathcal{E}(\D)}$ by
\[
(\Omega_\Delta)^{(H,F)}_T = \begin{cases}
1/f(T) & \text{ if } T \subset H \\
0 & \text{ otherwise.}
\end{cases}
\]

\begin{prop}\label{Prop:MargToCorr}
The linear map $\Omega_\Delta$ sends $\Marg(\Delta)$ to $\Corr(\Delta)$.
\end{prop}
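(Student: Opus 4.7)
The plan is to establish this by proving the stronger vertex-to-vertex statement: $\Omega_\Delta \bu^S = \bv^S$ for every $S \subset [n]$. Since $\Marg(\Delta) = \conv\{\bu^S\}$ and $\Corr(\Delta) = \conv\{\bv^S\}$, linearity of $\Omega_\Delta$ will then immediately give the inclusion (in fact, a surjection) of the image.

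To carry this out, I would fix $S \subset [n]$ and $T \in \bar\Delta$, and compute
\[
(\Omega_\Delta \bu^S)_T \;=\; \sum_{(H,F) \in \mathcal{E}(\Delta)} (\Omega_\Delta)^{(H,F)}_T \, u^S_{(H,F)}.
\]
By definition, the factor $(\Omega_\Delta)^{(H,F)}_T$ is nonzero (and equals $1/f(T)$) exactly when $T \subset H$, while $u^S_{(H,F)}$ is nonzero (and equals $1$) exactly when $H = S \cap F$. Combining these, the sum restricts to those facets $F$ for which $T \subset S \cap F$, i.e., $T \subset S$ and $T \subset F$.

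The argument then splits into two cases. If $T \not\subset S$, then no facet contributes and $(\Omega_\Delta \bu^S)_T = 0 = v^S_T$. If $T \subset S$, then the contributing facets are exactly those containing $T$, of which there are $f(T)$ by definition, so the sum equals $f(T) \cdot \tfrac{1}{f(T)} = 1 = v^S_T$. In either case, $(\Omega_\Delta \bu^S)_T = v^S_T$, completing the verification.

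I do not expect any real obstacle: the proof is a direct unwinding of the three definitions, and the normalization $1/f(T)$ is precisely engineered so that the overcounting across facets containing $T$ cancels out. The only subtlety worth flagging is that $\Omega_\Delta$ is \emph{not} injective as a map of vector spaces (its domain is indexed by $\mathcal{E}(\Delta)$, which is larger than $\bar\Delta$ whenever $\Delta$ has overlapping facets), so $\Omega_\Delta$ restricted to $\Marg(\Delta)$ is an affine isomorphism onto $\Corr(\Delta)$ but has a nontrivial kernel on the ambient space — a point already used implicitly in Section~3 via the pseudoinverse $\Pi_\Delta$ of Proposition~\ref{Prop:CorrToMarg}.
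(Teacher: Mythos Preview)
Your proposal is correct and follows essentially the same argument as the paper: fix $S$ and $T$, observe that the only nonzero summands in $(\Omega_\Delta \bu^S)_T$ occur for facets $F$ with $T\subset S\cap F$, and then split into the cases $T\not\subset S$ (sum is $0$) and $T\subset S$ (sum is $f(T)\cdot 1/f(T)=1$). The additional remark about $\Omega_\Delta$ having nontrivial kernel on the ambient space is accurate and consistent with the paper's discussion preceding Corollary~\ref{cor:MargSwitching}.
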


\begin{proof}
Let $\Omega = \Omega_\Delta.$
Fix $T \in (\bar\Delta)$ and $S \subset [n]$. 
Consider the product of the row $\Omega_T$ with $\bu^S$.
We must show that
\[
\Omega_T \bu^S = \begin{cases}
1 & \text{ if } T \subset S \\
0 & \text{ otherwise.}
\end{cases}
\]
For each $(H,F) \in \mathcal{E}(\Delta)$, we have that 
\[
\Omega_T^{(H,F)} \bu^S_{(H,F)} = \begin{cases}
1/f(T) &\text{ if } T \subset H \text{ and } S \cap F = H \\
0 & \text{ otherwise.}
\end{cases}
\]
First suppose that $T$ is not a subset of  $S$. Then for any facet $F$, we cannot have $T \subset S \cap F$. Hence, the scalar  $\Omega_T^{(H,F)} \bu^S_{(H,F)}$ must be zero for all $H$ and $F$, which implies that $M_T \bu^S $ is zero. 

Now suppose that $T$ is a subset of $S$. For each $F\in \facet(\D)$ such that $T \subset F$, there is exactly one $H$ such that $\Omega_T^{(H,F)} \bu^S_{(H,F)} \neq 0$, namely $H = S \cap F$. If $T$ is not a subset of $F$, then $\Omega_T^{(H,F)} \bu^S_{(H,F)} = 0$ for all $H$. So
\[
\Omega_T \bu^S = \sum_{\substack{F \text{ facet} \\ T \subset F}} 1/f(T) = 1.
\]
Therefore the image of $\Marg(\Delta)$ under $\Omega$ is $\Corr(\Delta)$.
\end{proof}

Since $\Marg(\Delta)$ does not contain the origin, the map from $\Corr(\D)$ to $\Marg(\D)$ must be an affine map.
Consider the affine transformation $\Pi_\Delta \bx+\bu^{\emptyset}$ from $\R^{\bar\Delta}$ to $\R^{\mathcal{E}(\D)}$ with  
\[({\Pi_\Delta})^{T}_{(H,F)}=\begin{cases} 
      (-1)^{\#H+\#T} & if\: H\subset T\subset F\\
      0 & \mathrm{else},
   \end{cases}
\]
\begin{prop}
\label{Prop:CorrToMarg}
The affine map $\Pi_\Delta \x+\bu^{\emptyset}$ sends $\Corr(\Delta)$ to $\Marg(\Delta)$ .
\end{prop}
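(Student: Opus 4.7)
The plan is to verify directly that the affine map sends each vertex $\bv^S$ of $\Corr(\Delta)$ to the corresponding vertex $\bu^S$ of $\Marg(\Delta)$, i.e.\ that $\Pi_\Delta \bv^S + \bu^\emptyset = \bu^S$ for every $S\subset[n]$. Since both polytopes have vertices indexed by the subsets of $[n]$ and the maps between the remaining two polytopes ($\Omega_\Delta$ and $\Phi_\Delta$) sending the $S$-vertex to the $S$-vertex have already been established, this vertex-by-vertex check is what needs to be done.

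Fix $S\subset[n]$ and $(H,F)\in\mathcal{E}(\Delta)$. The strategy is to compute
\[
(\Pi_\Delta \bv^S)_{(H,F)} = \sum_{T\in\bar\Delta} (\Pi_\Delta)^{T}_{(H,F)}\, v^{S}_T = \sum_{\substack{\emptyset\neq T \\ H\subset T\subset F \\ T\subset S}} (-1)^{\#H+\#T},
\]
and recognize the right-hand side via the standard alternating-sum identity
$\sum_{T'\subset Y}(-1)^{\#T'} = [Y=\emptyset]$.
The joint constraints force $T$ to range over sets $H\subset T\subset S\cap F$, which is empty unless $H\subset S\cap F$. Substituting $T = H\cup T'$ with $T'\subset (S\cap F)\setminus H$ and separating the cases $H\neq \emptyset$ versus $H=\emptyset$ will yield the desired value.

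For $H\neq\emptyset$ the condition $T\neq\emptyset$ is automatic, and the sum evaluates to $1$ exactly when $H = S\cap F$ and $0$ otherwise; since $u^{\emptyset}_{(H,F)}=0$ in that case, this matches $u^{S}_{(H,F)}$. For $H = \emptyset$ the requirement $T\neq\emptyset$ removes a single term from the full alternating sum, giving $-[S\cap F \neq \emptyset]$; adding $u^{\emptyset}_{(\emptyset,F)} = 1$ recovers $[S\cap F = \emptyset] = u^{S}_{(\emptyset,F)}$. Hence $\Pi_\Delta \bv^S + \bu^\emptyset = \bu^S$ in every coordinate, and affine extension to $\Corr(\Delta) = \conv\{\bv^S\}$ yields the surjection onto $\Marg(\Delta)$.

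The only delicate point, and the place where one must be careful, is the bookkeeping forced by the fact that $\emptyset\not\in\bar\Delta$ and hence does not index a column of $\Pi_\Delta$. This boundary exclusion is precisely what is corrected by the additive translate $\bu^\emptyset$, and it is the reason the transformation must be affine rather than linear; once this is handled the rest is a routine application of the alternating-subset-sum identity.
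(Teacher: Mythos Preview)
Your proof is correct and follows essentially the same approach as the paper's: both compute $(\Pi_\Delta \bv^S)_{(H,F)}$ as the alternating sum $\sum_{\emptyset\neq T,\ H\subset T\subset S\cap F}(-1)^{\#H+\#T}$ and reduce it via the identity $\sum_{T'\subset Y}(-1)^{\#T'}=[Y=\emptyset]$. The only cosmetic difference is that the paper introduces the translated polytope $\Marg^{\emptyset}(\Delta)$ with vertices $\bw^S=\bu^S-\bu^{\emptyset}$ and splits the verification into four cases (according to whether $S\cap F=H$ and whether $H=\emptyset$), whereas you keep the translate implicit and handle the computation in two cases ($H\neq\emptyset$ versus $H=\emptyset$); your organization is slightly more economical but the argument is the same.
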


\begin{proof}
Let $\Pi = \Pi_\Delta$. Consider the polytope $\Marg^{\emptyset}(\D)$ that arises from translating $\Marg(\D)$  by $\bu^{\emptyset}$.   Its vertices $\bfw^S=\bfu^S-\bfu^{\emptyset}$, for any $S\subset [n]$, have coordinates in $\R^{E}$ 
\[w^{S}_{(H,F)}=\begin{cases} 
      1 & if\: S\cap F=H, H\neq \{\emptyset\}\\
      -1 & if \: S\cap F\neq H, H=\{\emptyset\},\\
      0 & \mathrm{else}.
   \end{cases}
\]

To prove that the proposed affine transformation maps $\Corr(\D)$ to $\Marg(\D)$, it is enough to prove that the linear transformation $\Pi$  maps $\Corr(\D)$ to $\Marg^{\emptyset}(\D)$. Since $\Pi^{T}_{(H,F)}$ and $\bv^T_{S}$ are nonzero only when $H\subset T\subset F$, and $T\subset S$ we have the following situation: 
\begin{align}
\label{eq:cortomarg}
    \Pi_{(H,F)}\cdot \bv^{S}=\sum_{T\in \bar{\D}}\Pi^{T}_{(H,F)} v^S_{T}=\sum_{\substack{T\in\bar{\D}\\ H\subset T\subset F\cap S}}\Pi^{T}_{(H,F)} v^S_{T}.
\end{align}
From here, we have four possible scenarios: 

\textit{Case 1:} Let $S\cap F = H$ and $H= \emptyset$.  In this case \(\Pi_{(\emptyset,F)}\cdot \bv^{S}=0\) since there is no  nonempty face  $T \in \D$ with $ H\subset T\subset F\cap S$.

\textit{Case 2:}  Let $S\cap F=H$ and $ H\neq \{\emptyset\}$. In this case the only the nonempty face  $T \in \D$  that satisfies $ H\subset T\subset F\cap S$ is $H$. Hence,  
\(\Pi_{(H,F)}\cdot \bv^{S}=(-1)^{\#H+\#H}=1\).

\textit{Case 3:} Let $S\cap F \neq H$ and $ H= \{\emptyset\}$. Then
\[\Pi_{(\emptyset,F)}\cdot \bv^{S}=\sum_{\substack{T\in\Bar{\D}\\ T\subset F\cap S}}(-1)^{\#T} =\sum_{\emptyset\neq T\subset F\cap S }(-1)^{\#T} =\sum_{ T\subset F\cap S }(-1)^{\#T}-1=-1.
\]

\textit{Case 4:} $S\cap F \neq H, \: H\neq  \emptyset$.  Then
\begin{align*}
\Pi_{(H,F)}\cdot \bv^{S}=\sum_{\substack{T\in\Bar{\D}\\ H\subset T\subset F\cap S}}(-1)^{\#T+\#H} =(-1)^{\#H}\sum_{\substack{T\in\Bar{\D}\\ H\subset T\subset F\cap S}}(-1)^{\#T} = (-1)^{2\#H}\sum_{T'\subset (F\cap S)\backslash  H}(-1)^{\#T'}
=0.   
\end{align*}{}
So we have shown that $\Pi \cdot \bv^S = \bw^S,$ as needed.
\end{proof}

Finally, the compositions of the transformations above produce the maps between the generalized cut polytope and  the binary marginal polytope of $\D$. 

\nocite{*}
\bibliographystyle{acm}
\bibliography{paper}
\end{document}